\newtheorem{thm}{Theorem} \newtheorem{prop}[thm]{Proposition}
\newtheorem{lem}[thm]{Lemma} 
\theoremstyle{definition}
\theoremstyle{definition}\newtheorem{rem}[thm]{Remark}
\numberwithin{equation}{section} \numberwithin{thm}{section}
\newtheorem{definition}[thm]{Definition}
\newcommand*\patchAmsMathEnvironmentForLineno[1]{%
  \expandafter\let\csname old#1\expandafter\endcsname\csname
  #1\endcsname \expandafter\let\csname
  oldend#1\expandafter\endcsname\csname end#1\endcsname
  \renewenvironment{#1}%
  {\linenomath\csname old#1\endcsname}%
  {\csname oldend#1\endcsname\endlinenomath}}%
\newcommand*\patchBothAmsMathEnvironmentsForLineno[1]{%
  \patchAmsMathEnvironmentForLineno{#1}%
  \patchAmsMathEnvironmentForLineno{#1*}}%
\newcommand{\dist}{{\rm dist}\,} 
 \newcommand{\R}{{\mathbb R}}
\newcommand{\N}{{\mathbb N}} 
 \newcommand{\spt}{{\rm spt}\,}
\newcommand{\diam}{{\rm diam}\,}
\newcommand{\E}{{\mathbb E}}
\newcounter{todocounter}
\newcommand{\eps}{\varepsilon}
 \newcommand{\var}{{\rm var}}
\DeclareMathOperator{\V}{\mathcal V^\infty}
\def\en{\mathbb N} \def\er{\mathbb R} \def\zet{\mathbb Z}
\def\qe{\mathbb Q}
\def\C{\mathcal C}  
\def\M{\mathcal{M}}  \def\I{\mathcal I}
\def\E{\mathcal E}  \def\J{\mathcal J}
\def\T{\mathcal T}  \def\eS{\mathfrak S}
\def\V{\mathcal V} \def\De{\mathfrak D}
\def\Ce{\textbf{C}}
\def\Pe{\textbf{P}}
\def\eS{\textbf{S}}
\def\eM{\textbf{M}}
\newcommand{\intr}{\operatorname{Int}}
\newcounter{defcounter} \setcounter{defcounter}{0}
\newenvironment{measAssumption}{%
  \addtocounter{equation}{-1} \refstepcounter{defcounter}
  
  \begin{equation}} {\end{equation}}
\begin{document}

\title{On the asymptotics of counting functions for Ahlfors regular sets}
\author{Du\v san Pokorn\'y}

\author{Marc Rauch}

\thanks{During the work on the paper was the first author a junior
  researcher in the University Centre for Mathematical Modelling,
  Applied Analysis and Computational Mathematics (MathMAC). The research was also supported by grants GA\v CR~15-08218S and GA\v CR~18-11058S and by DFG}

\begin{abstract}

In this paper we deal with the so-called Ahlfors regular sets
(also known as $s$-regular sets) in metric spaces.
First we show that those sets correspond to a certain class of
tree-like structures.
Building on this observation we then study the following question:
under which conditions does the limit $\lim_{\eps\to 0+} \eps^s N(\eps,K)$
exist, where $K$ is an $s$-regular set and $N(\eps,K)$ is for instance
the $\eps$-packing number of $K$?

\end{abstract}

\email{dpokorny@karlin.mff.cuni.cz} \email{marc.rauch@posteo.de}

\keywords{Ahlfors regular, $s$-regular, packing number, Renewal theory, Minkowski measurability}
\subjclass[2010]{30L99, 28A80}
\date{\today}
\maketitle

\section{Introduction}

In \cite{Lalley1988}, Lalley proved as an application of the renewal theory the following result: 
If $K$ is a non-lattice self-similar set which satisfies the strong open set condition, then $\lim_{\eps\to 0+} \eps^D P(\eps,K)=P\in(0,\infty)$, where $D$ is the so-called Minkowski dimension of $K$.
Here $P(\eps,K)$ denotes the packing number of $K$ with parameter $\eps$.
Lalley continued with his fundamental paper \cite{Lalley1989}, where he proved abstract renewal theorems in shift spaces and applied those
to the study of the asymptotic behaviour of packing numbers for the limit sets of Shottky groups.

Such approaches based on the renewal theory turned out to be very fruitful in various contexts, including the most prominent case of the Minkowski measurability.
This includes for example results on the Minkowski measurability of self-similar sets by Gatzouras (see \cite{Gatzouras2001}) or a class of self conformal sets by Freiberg and Kombrink (see \cite{FreibergKombrink2011}).
Recently, very strong versions of of renewal theorems were obtained by Kombrink and Kesseb\"ohmer (see \cite{Kombrink2017} and \cite{KKFrobenius2017}) and applied in various situations in fractal geometry in \cite{KKApollonian2017}.
Similar use of the renewal theory can also be found when dealing with the so-called fractal curvatures (see e.g. \cite{Winter2006}).

In this paper we take another point of view and discuss how far those ideas can be pushed in the context of more general metric spaces.

The key notion of our contribution is the concept of an Ahlfors regular or, as we will call it, $s$-regular set.
A compact subset $K$ of a metric space $X$ is called $s$-regular, if there is a Borel measure $\mu$ and some
$0<\alpha,\beta,R<\infty$ such that $0<\mu(K)\leq\mu(X)<\infty$ and
\begin{align*}
\alpha r^s<\mu(B(x,r))<\beta r^s
\end{align*}
for all $x\in K$ and $R\ge r>0$. Many regular sets such as balls or spheres in $\er^d$, or, more general, compact domains with a $\C^2$ boundary (or their boundaries) are examples of Ahlfors regular sets. A classical examples of fractal Ahlfors regular sets are the self-similar sets satisfying the open set condition (see for instance \cite[Theorem~4.14]{Mattila}).

We first observe that $s$-sets can be characterized by the existence of a tree satisfying certain natural properties, which we call an $s$-tree. 
Assuming the existence of an $s$-tree which satisfies more restrictive conditions, we then prove results on the asymptotic behaviour of counting functions including
packing numbers, counting numbers and also results on Minkowski measurability.
In fact we prove an abstract result on an axiomatically defined class of counting functions, which then can be applied to the notions mentioned above.

The plan of the paper is as follows. 
First in Section~\ref{S:preliminaries} we recall some background material mostly from metric spaces, fractal geometry and ergodic theory, including the main points from the abstract renewal theory introduced in \cite{Lalley1989}.
We also introduce a concept of general counting function (Section~\ref{SS:countingFunction}). This captures (at least for our applications) all important features of functions such as covering or packing numbers.

In Section~\ref{S:sTrees} we introduce our key concept of an $s$-tree, a tree construction that allows us to characterize $s$-sets.
This is certainly not a new idea, such type of tree construction appeared for instance in \cite{Arcozzi2014} or \cite{Martina}, but to our knowledge, it is not known that such trees can be used to characterize $s$-sets.
The main points of this section are Lemma~\ref{L:sTreeImpliesRegular} and Lemma~\ref{L:RegularImpliessTree}, which explain how $s$-trees relate to $s$-sets.

Next Section~\ref{S:measurability} contains the main result (Theorem~\ref{MainTheoren}).
It states that under some technical but natural conditions 
the limit
$\lim_{\eps\to 0+} \eps^s C(\eps,K)$ exists, where $C$ is a general counting function and $K$ is an $s$-set.
Moreover, at the end of the section we show, how to modify the method so it can also be used for proving Minkowski measurability of $K$.

In the last Section~\ref{S:examples} we present a method how to produce trees satisfying the assumptions of our main theorem from Section~\ref{S:measurability}. Such trees are produced using a class of mappings which we call $\alpha$-almost similar mappings.
Moreover, we show that the class of $\alpha$-almost similar mappings somehow relates to the class of conformal $\C^{1+\alpha}$ diffeomorphisms (Proposition~\ref{P:C1,qtoqSmooth}) and so Theorem~\ref{MainTheoren} can be in particular applied to images of non-lattice self-similar sets with respect to conformal $\C^{1+\alpha}$ diffeomorphisms.


\subsection{Notation and basic facts.}\label{SS:BasicNotions}
For $K\subseteq\er^d$ we denote by $|K|$ the volume (i.e. the $d$-dimensional Lebesgue measure) of $K$.
Let $(X,d)$ be a metric space.
By $\intr A$ and $\partial A$ we denote the interior and boundary of a set $A\subseteq X$,
respectively.
Given some $x\in X$ and $\eps>0$,
we denote by $B(x,\eps)\coloneqq\{y\in X:d(x,y)\le \eps\}$ the
\textbf{closed ball} around $x$ with radius $\eps$ 
and by  $U(x,\eps)\coloneqq\{y\in X:d(x,y)< \eps\}$ the
\textbf{open ball} around $x$ with radius $\eps$. 
If $\varnothing\neq A\subseteq X$ is some subset of $X$, the
\textbf{distance} of $x$ to $A$ is denoted by
$d(x,A)\coloneqq\inf_{y\in A}d(x,y)$.
Define the \textbf{parallel} set of a set $A$ with radius $\eps>0$ by
$$
A_{\eps}:=\{x\in X:\dist(x,A)\leq\eps\}.	
$$

Now let $ (Y,\rho) $ be another metric space and $\psi:A\to Y$ be a mapping
such that for some constants  $0<L_1,L_2<\infty$
\begin{align}\label{eq:biLipschitz}
L_1 d(x,y)\le \rho(\psi(x),\psi(y))\le L_2 d(x,y)
\end{align}
for every $x,y\in A$.
We call such $\psi$ to be \textbf{$L_1$-$L_2$-bi-Lipschitz} on $A$. 
Note that \eqref{eq:biLipschitz} immediately implies for each $K\subseteq X$ that
	\begin{align}\label{usefulbilipestimate}
	\left(\psi(K)\right)_{L_1\delta}\subseteq \psi(K_\delta)\subseteq \left(\psi(K)\right)_{L_2\delta},
	\end{align}
	whenever $\psi:X\to Y$ is onto and $L_1$-$L_2$-bi-Lipschitz on $X$.
	
Moreover, the condition that $\psi$ is onto, can be substituted by the following assumption (which is relevant when applying 
Theorem~\ref{MainTheoren}):
 suppose $\varnothing\neq K\subseteq \er^d$ to be compact, $\delta\geq 0$ and  $\psi:K_\delta\to\er^d$ to be $L_1$-$L_2$-bi-Lipschitz.
Then again \eqref{usefulbilipestimate} holds.
To see this, first observe that the second inclusion in \eqref{usefulbilipestimate} again follows directly from \eqref{eq:biLipschitz}.
For the first inclusion, note that the case
$\delta=0$ is trivial.
Next suppose $\delta>0$ and that there is some $y\in 	\left(\psi(K)\right)_{L_1\delta}\smallsetminus \psi(K_\delta)$.
Then there exists by the definition of the parallel set also an $x\in\left(\intr\left(\psi(K)\right)_{L_1\delta}\right)\smallsetminus \psi(K_\delta)$.
In particular we obtain $\dist(x,\psi(K))<L_1\delta$.
As $K$ is compact and $\psi$ is continuous, we can find a $u\in \psi(K)$ such that $\dist(x,\psi(K))=\left|x-u\right|$.
Let $L$ be the line segment between $x$ and $u$, and define the compact set $M\coloneqq L\smallsetminus \intr\psi(K_\delta)$.
Let $w$ be the nearest point to $u$ in $M$.
Then $w\in \partial \psi(K_\delta)$ and  $\dist(w,\psi(K))<L_1\delta$.
Define $z\coloneqq \psi^{-1}(w)$ (note that due to the compactness of $K_\delta$ one has $ \partial \psi(K_\delta)\subseteq \psi(K_\delta)$).
One has $z\in \partial K_\delta$. 
Indeed, suppose $z\in\intr K_\delta$.
By the Invariance of domain theorem, the set $\psi(\intr K_\delta)$ is open.
Thus $w=\psi(z)\in\psi(\intr K_\delta)\subseteq \intr \psi (K_\delta)$, which contradicts $w\in \partial \psi(K_\delta)$.

Now as  $z\in \partial K_\delta$, one has $\dist(z,K)=\delta$, but then \eqref{eq:biLipschitz}  implies
$\dist(w,\psi(K))\ge L_1\delta$, which contradicts  $\dist(w,\psi(K))<L_1\delta$.

\subsection{Minkowski dimension and measurability.}
\subsubsection{Packing and covering numbers.}\label{SS:PackingAndCovering}
Let $(X,d)$ be a metric space, $K\subseteq X$ and $\eps>0$.
A set $M\subseteq X$ will be called 
\begin{itemize}
\item \textbf{$\eps$-separated} in $ K $, if $M\subseteq K$ and $d(x,y)>\eps$ for all $x\neq y\in M$,
	\item \textbf{$\eps$-covering} of $K$, if $K\subseteq\bigcup_{x\in M} B(x,\eps)$,
	\item \textbf{$\eps$-packing} in $K$, if $M\subseteq K$ and $B(x,\eps)\cap B(y,\eps)=\varnothing$ for all $x\neq y\in M$.
\end{itemize}
To any of those notions (as well as some others) we can assign a corresponding counting function.
For instance we denote $\Pe(\eps, K)$ the maximal cardinality of an $\eps$-packing in $K$ and call it the \textbf{packing number} of $K$ (with parameter $\eps$).
Similarly $\Ce(\eps, K)$ will be the minimal cardinality of an $\eps$-covering of $K$ and is called the \textbf{covering number} of $K$ (with parameter $\eps$).
We define similarly to $\Pe(\eps, K)$ a counting function $\eS(\eps,K)$, which corresponds to the notion of $\eps$-separated sets. 

The above notions of counting functions also give rise to corresponding definitions of fractal dimensions.
For instance, we can define the \textbf{upper Minkowski dimension} $\overline{\dim}_\M(K)$ and the \textbf{lower Minkowski dimension} $\underline{\dim}_\M(K)$ of a
compact set $K\subseteq X$ using the packing numbers as
\begin{equation*}\label{MinDimension}
\overline{\dim}_\M(K)=\limsup_{\eps\to 0+} -\frac{\Pe(\eps, K)}{\log(\eps)}\quad\text{and}\quad \underline{\dim}_\M(K)=\liminf_{\eps\to 0+} -\frac{\Pe(\eps, K)}{\log(\eps)}.
\end{equation*}

In fact it is well known and easy to verify that the following chain of inequalities holds:
\begin{equation*}
\eS(2\eps,K)\leq \Pe(\eps, K)\leq \Ce(\eps, K)\leq \eS(\eps, K).
\end{equation*}
Thus we can replace $\Pe(\eps, K)$ with any other counting function and obtain exactly the same values for both dimensions.
If $\overline{\dim}_\M(K)=\underline{\dim}_\M(K)$, then the common value is denoted $\dim_\M(K)$ and called the \textbf{Minkowski dimension} of $K$.

If the set $ K $ has a well defined Minkowski dimension $ s $, one can ask about finer properties of the function $\eps\mapsto\eps^{s}\Pe(\eps, K)$.
For instance the finiteness of $\limsup_{\eps\to 0+}\eps^{s}\Pe(\eps, K)$, positivity of $\liminf_{\eps\to 0+}\eps^{s}\Pe(\eps, K)$, or, as we intend to do in Section~\ref{S:measurability} of this paper, the existence of the limit. 

It is good to note that the existence of the limit $\lim_{\eps\to 0+}\eps^{s}\Pe(\eps, K)$ is a relatively natural counterpart of the notion of Minkowski measurability in those metric spaces, where there is no natural notion of volume.

\subsubsection{General counting functions}\label{SS:countingFunction}
We want to generalize the notion of a counting function on a metric space $ M $.
Let $N$ be a mapping that assigns to each compact set $K\subseteq M$ and every $\eps>0$ a value $N(\eps,K)\in[0,\infty)$.
Consider the following conditions:
  \begin{enumerate}[label={\rm (C\arabic*)}]
  \item\label{C:monotonicity} $N(\eps,K)\leq N(\delta,K)$ whenever $\delta\leq\eps$;
  \item\label{C:monotonicityInInclusion} $N(\eps,K)\geq N(\eps,H)$ whenever $K\supseteq H$
  \item\label{C:subadititivity} $N(\eps, \bigcup_{i=1}^n K_i)\leq \sum\limits_{i=1}^{n}N(\eps, K_i)$, 
  for any compact sets $K_1,\dots,K_n$;
  \item\label{C:separation} there is a constant $A$ such that $N(\eps,K\cup Q)=N(\eps,K)+N(\eps, Q)$  whenever $\dist(K,Q)> A\eps$ ;
  \item\label{C:lipschitzImage} there is a constant $G\geq 0$ (depending on $N$) such that for every $\tau, L>0$ one has $N(\eps,K)\geq N(L\eps ,\phi(K))$ whenever $\phi:M\to M$ is a mapping which is $L$-Lipschitz on $K_{G\tau}$ and $\eps\leq\tau$;
  \item\label{C:comparability} there is a constant $B>0$ such that $\frac{1}{B}\eS(B\eps ,K)\leq N(\eps ,K)\leq B\eS(\frac{\eps}{B},K)$.
  \end{enumerate}
  Note that if both \ref{C:monotonicity} and \ref{C:comparability} hold, it follows that
  \begin{align}\label{B_bigger_than_1}
  B\ge 1.
  \end{align}
  
  It is easy to verify that $\Pe$, $\Ce$ and $\eS$ satisfy all of those conditions.
In fact conditions \ref{C:monotonicity}-\ref{C:subadititivity} and \ref{C:comparability} are trivial and it is also easy to see that \ref{C:separation} holds with $A=2$ for $\Pe$ and $\Ce$, and with $A=1$ for $\eS$.
The only non-trivial condition is \ref{C:lipschitzImage}, and we will provide a proof that it holds with $G=1$ for the packing number $\Pe$. The proof for $\Ce$ and $ \eS $ is similar, the constant $G$ for $\Ce$ and $\eS$ is equal to $2$ and $0$, respectively.

To do so let $\tau>0$ and let $\phi:M\to M$ be a mapping which is $L$-Lipschitz on $K_{\tau}$. Consider for some $\eps\leq\tau$ an $L\eps$-packing $x_1,\dots,x_n$ of $\phi(K)$ and find $y_i$ such that $x_i=\phi(y_i)$, $i=1,\dots,n$.
We will prove that $y_1,\dots,y_n$ is an $\eps$-packing of $K$.
Suppose that this is not the case.
Then there are $i$ and $j$ and $z\in M$ such that $z\in B(y_i,\eps)\cap B(y_j,\eps)$.
This in particular implies that $z\in K_\tau$ and therefore $\phi(z)\in B(x_i,L\eps)\cap B(x_j,L\eps)$ which is a contradiction to the fact that  $x_1,\dots,x_n$ form an $L\eps$-packing of $\phi(K)$.

\subsubsection{Minkowski measurability}\label{SS:minkowskiMeasurability}
Suppose that $K\subseteq \er^d$ is a compact set,
than there is another way of defining the Minkowski dimension, namely via the scaling of the volume of its parallel sets. 
More precisely
\begin{equation*}
\overline{\dim}_\M(K)=d-\limsup_{\eps\to 0+} \frac{\log(|K_\eps|)}{\log(\eps)}\quad\text{and}\quad \underline{\dim}_\M(K)=d-\liminf_{\eps\to 0+} \frac{\log(|K_\eps|)}{\log(\eps)}.
\end{equation*}
We can also define the upper and lower \textbf{$s$-Minkowski content} by
\begin{equation*}
\overline\M^s(K)=\limsup_{\eps\to 0+} \frac{|K_\eps|}{\eps^{d-s}}\quad\text{and}\quad \underline\M^s(K)=\liminf_{\eps\to 0+} \frac{|K_\eps|}{\eps^{d-s}}.
\end{equation*}
Those two notions are closely related to the Minkowski dimension by the fact that
\begin{equation*}
\overline{\dim}_\M(K)=\inf\{s: \overline\M^s(K)=0\}\quad\text{and}\quad \underline{\dim}_\M(K)=\inf\{s: \underline\M^s(K)=0\}.
\end{equation*}
The set $ K $ is called \textbf{Minkowski measurable}, if $\overline\M^s(K)=\underline\M^s(K)\in(0,\infty)$ for some $s$. The common value is then called \textbf{Minkowski content} of $K$ and denoted by $\M^s(K)$.
Note that the value of $s$ is in that case necessarily equal to the Minkowski dimension of $K$.

Define $\eM(\eps,K)=\frac{|K_\eps|}{\eps^d}$. 
Then $K$ is Minkowski measurable if and only if 
\begin{equation*}
\lim_{\eps\to 0+}\eps^s \eM(\eps,K)\in(0,\infty),
\end{equation*}

where $s$ is again the Minkowski dimension of $K$.
The mapping $(\eps,K)\to \eM(\eps.K)$ satisfies all conditions \ref{C:monotonicity}-\ref{C:comparability} but \ref{C:lipschitzImage}.
Indeed, the only non-trivial ones are \ref{C:monotonicity} and \ref{C:comparability}.
To see \ref{C:comparability} it is sufficient to observe that
\begin{equation*}
V_d \eS(\eps,K)\leq \eM(\eps,K)\leq 2^d V_d \Ce(\eps,K),
\end{equation*}
where $V_d$ is a volume of the $d$-dimensional unit ball.
Condition \ref{C:monotonicity} is a consequence of the following lemma:

\begin{lem}\label{rataj-lemma}
Let $K\subseteq\er^d$ be a non-empty compact set.
Then the function 
\begin{equation}
\eps\mapsto\frac{|K_\eps|}{\eps^d}
\end{equation}
is non-increasing on $(0,\infty)$.
\end{lem}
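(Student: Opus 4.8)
The plan is to show that the difference quotient behaves well by comparing parallel sets at two scales $0 < \eps_1 < \eps_2$ and exploiting the fact that $(K_{\eps_1})_{\eps_2 - \eps_1} = K_{\eps_2}$. The cleanest route is to prove that the function $\eps \mapsto |K_\eps|$ is differentiable for a.e.\ $\eps > 0$ with a derivative that can be bounded in terms of $|K_\eps|/\eps$, and then integrate. Concretely, I would first recall (or reprove) the Kneser-type inequality: for $0 < s < t$ and a non-empty compact set $A$, the volumes of the parallel sets satisfy
\begin{equation*}
|A_t| \le \left(\frac{t}{s}\right)^d |A_s|,
\end{equation*}
which is exactly the statement that $t \mapsto t^{-d}|A_t|$ is non-increasing — but this is circular, so instead I would derive the stronger Kneser inequality
\begin{equation*}
|A_\lambda{}_t{}_+{}_(\lambda{}-{}1{}){}_s| \ge \text{(something)},
\end{equation*}
better phrased as: for $r \ge s > 0$ and $\lambda \ge 1$, $|A_{\lambda r}| \le \lambda^d |A_r|$ fails in general, so the right tool is Kneser's inequality $|A_{r+h}|^{1/d} - |A_r|^{1/d} \le |A_{s+h}|^{1/d} - |A_s|^{1/d}$ for $r \ge s$, i.e.\ the map $r \mapsto |A_r|^{1/d}$ is concave. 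I would cite this (it is classical, due to Kneser, and appears in Rataj--Winter and in Stachó's work).

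Granting concavity of $r \mapsto |K_r|^{1/d}$ on $(0,\infty)$, set $g(r) = |K_r|^{1/d}$. Then $g$ is concave, hence $g$ has a non-increasing right derivative $g'_+(r)$, and $g(r) = g(r_0) + \int_{r_0}^r g'_+(t)\,dt$. Since $g$ is non-decreasing (larger parallel sets have larger volume) and concave on $(0,\infty)$ with $g \ge 0$, concavity forces $g'_+(r) \le g(r)/r$ for all $r > 0$: indeed, extending the chord from $(0, \liminf_{r\to 0+} g(r)) \ge (0,0)$, concavity gives $g(r) - g(0+) \ge r\, g'_+(r)$, hence $g'_+(r) \le (g(r) - g(0+))/r \le g(r)/r$. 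Now compute, for the function $f(r) := |K_r|/r^d = g(r)^d / r^d = (g(r)/r)^d$: writing $h(r) = g(r)/r$, we have $h$ differentiable a.e.\ with $h'(r) = (g'_+(r) r - g(r))/r^2 \le 0$ by the bound just established, so $h$ is non-increasing, and therefore $f = h^d$ is non-increasing as well (composition of a non-increasing non-negative function with the increasing map $u \mapsto u^d$ on $[0,\infty)$).

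The main obstacle is establishing the concavity of $r \mapsto |K_r|^{1/d}$ — Kneser's inequality — which is the genuine geometric input; everything after that is soft convexity bookkeeping. If the authors prefer to avoid invoking Kneser, an alternative is a direct argument: for $\lambda > 1$ one has $K_{\lambda\eps} \subseteq \bigcup_{x \in K} B(x,\lambda\eps)$ and one wants $|K_{\lambda\eps}| \le \lambda^d |K_\eps|$, which does \emph{not} hold pointwise in general, so this naive scaling is a false lead and the concavity route (or a covering argument via the Steiner-type formula when $K$ has positive reach, then a limiting/approximation argument for general compact $K$) is really needed. I expect the write-up to simply cite Kneser's inequality (or the Rataj--Winter monograph) for the concavity of $r \mapsto |K_r|^{1/d}$ and then run the two-line convexity computation above; I would flag that this is where the only non-trivial step lies.
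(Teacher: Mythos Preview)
Your proposal is correct, and the overall shape---cite a nontrivial geometric fact about parallel-set volumes, then finish with elementary convexity---matches the paper. The specific geometric input differs, however.

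The paper cites Stach\'o (1976) for two facts about $f(r):=|K_r|$: that $f'_+$ exists everywhere with $f(r)=\int_0^r f'_+(t)\,dt$, and that $r\mapsto f'_+(r)/r^{d-1}$ is non-increasing. From the second fact they first derive $f(r)\ge \frac{r}{d}f'_+(r)$ by integrating, and then for $0<r<s$ estimate $f(s)-f(r)=\int_r^s f'_+(t)\,dt\le \frac{f'_+(r)}{r^{d-1}}\cdot\frac{s^d-r^d}{d}\le f(r)\bigl((s/r)^d-1\bigr)$, which rearranges to the claim. You instead invoke Kneser's concavity of $g(r):=|K_r|^{1/d}$ and then argue that $g(r)/r$ is non-increasing. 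Both Stach\'o's monotonicity and Kneser's concavity live in the same circle of Brunn--Minkowski-type results and are of comparable depth; neither is a free lunch. Your route is arguably tidier once concavity is granted---a single chord inequality suffices---while the paper's route makes the derivative structure more explicit.

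One small wrinkle: your passage from ``$h'(r)\le 0$ a.e.'' to ``$h$ non-increasing'' tacitly uses local absolute continuity of $h$. This is fine (concave $g$ is locally Lipschitz, and $1/r$ is smooth), but you can bypass it entirely: concavity on $(0,\infty)$ together with $g(0+)=|K|^{1/d}\ge 0$ gives directly, for $0<r<s$, that $g(r)\ge \frac{s-r}{s}g(0+)+\frac{r}{s}g(s)\ge \frac{r}{s}g(s)$, hence $g(r)/r\ge g(s)/s$. That one-line argument is cleaner than the differentiation detour.
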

\begin{proof}
Let $f:(0,\infty)\to\er$ be defined as $f(r)=|K_r|$.
It is well known (see e.g. \cite[Lemma~2, Theorem~1]{Stacho1976}), that $f'_+$ exists everywhere in $(0,\infty)$ and that $f(r)=\int_0^r f'_+(t)\;dt$.
Moreover, the function $r\to\frac{f'_+(r)}{r^{d-1}}$ is non-increasing.
Those facts imply
\begin{equation}\label{eq:kneser1}
f(r)=\int_0^r f'_+(t)\;dt\geq \int_0^r f'_+(r)\left(\frac{t}{r}\right)^{d-1}\;dt=\frac{r}{d}f'_+(r).
\end{equation}
Having now $0<r<s$ we can write
\begin{equation}\label{eq:kneser2}
\begin{aligned}
f(s)-f(r)&=\int_r^s f'_+(t)\;dt
\leq \int_r^s f'_+(r)\left(\frac{t}{r}\right)^{d-1}\;dt \\
&=\frac{f'_+(r)}{r^{d-1}}\cdot \frac{s^d-r^d}{d}
\leq f(r)\left(\left(\frac{s}{r}\right)^d -1\right),
\end{aligned}
\end{equation}
where the last inequality holds by \eqref{eq:kneser1}.
Regrouping terms in \eqref{eq:kneser2} we obtain 
\begin{equation*}
f(s)\leq \left(\frac{s}{r}\right)^d f(r),
\end{equation*}
which is what we need.
\end{proof}

Although \ref{C:lipschitzImage} is not valid for $ \eM$, it satisfies a weaker condition 
\begin{enumerate}[label={\rm (C'\arabic*)}]
\setcounter{enumi}{4}
\item\label{C:biLipschitzImage} there is a constant $G\geq 0$ such that $N(\eps ,K)\geq\left(\frac{L}{M}\right)^d N(L\eps,\phi(K))$ whenever $\phi$ is a mapping which is $L$-$M$-bi-Lipschitz on $K_{G\tau}$ and $\eps\leq\tau$. 
\end{enumerate}
To see this pick $G=1$. Then each $\phi$ which is $L$-$M$-bi-Lipschitz on $K_{G\tau}$ is also $L$-$M$-bi-Lipschitz on $K_{\eps}$ for every $0<\eps\leq\tau$, and so for any such $\eps$ we can write
\begin{equation*}
\begin{aligned}
 \eM(\eps,K)&=\frac{|K_\eps|}{\eps^d}\geq\frac{|\phi(K_\eps)|}{M^d\eps^d}\geq\frac{|(\phi(K)_{L\eps})|}{M^d\eps^d}\\
&=\left(\frac{L}{M}\right)^d \frac{|(\phi(K)_{L\eps})|}{L^d\eps^d}=\left(\frac{L}{M}\right)^d  \eM(L\eps,\phi(K)),
\end{aligned}
\end{equation*}
where the first inequality holds due to the area formula for Lipschitz mappings,
and the second one due to \eqref{usefulbilipestimate}
\subsection{Self-similar sets}
Let $\varphi_1,\dots,\varphi_N:\er^d\to\er^d$ be contracting similarities with contracting ratios $0<r_1,\dots,r_N<1$.
It is well known (see e.g. \cite{Hutchinson1981}) that there exists a unique non-empty compact set $K$ satisfying
\begin{equation*}
K=\bigcup_{i=1}^N\varphi(K).
\end{equation*}
This is then called the \textbf{self-similar set} generated by similarities $\varphi_1,\dots,\varphi_N$ (the so called \textbf{iterated function system} - \textbf{IFS}).
The self-similar set $K$ (or rather the mappings $\varphi_1,\dots,\varphi_N$) is said to satisfy the \textbf{open set condition} (OSC), if there exists a non-empty open set $U\subseteq \er^d$ such that
\begin{itemize}
	\item $\varphi_i(U)\subset U$ for every $i=1,\dots,N$,
	\item $\varphi_i(U)\cap \varphi_j(U)=\varnothing$ whenever $i\not=j$.
\end{itemize}
The set $U$ is then called the \textbf{feasible open set} for $K$ (or for the mappings $\varphi_1,\dots,\varphi_N$).

If (OSC) is satisfied, the unique solution $s$ of the equation $\sum r_i^s=1$ (the so called \textbf{similarity dimension} of $K$) is then equal to both Hausdorff and Minkowski dimension of $K$.
Note, that by \cite{Schief1994} the open set condition is equivalent to the \textbf{strong open set condition} (SOSC), where one additionally assumes that $K\cap U\not=\varnothing$.

The self-similar set $ K $ (or rather the ratios $r_1,\dots,r_N$) is called \textbf{lattice}, if there exists some $r>0$ such that $\frac{\log(r_i)}{\log(r)}\in\en$ for every $i=1,\dots,N$.
It will be called \textbf{non-lattice}, if it is not lattice.

\subsection{Doubling and $s$-regular spaces}\label{SS:DoublinsandSRegular}
One of the most important properties of $s$-regular sets is the fact that their lower and upper Minkowski dimensions are both equal to $s$, and the same is true for the Hausdorff dimension. 
This can be formulated in a more quantitative way by observing that there are $0<L^{\pm}<\infty$ such that
\begin{equation}\label{sRegularInequalinies}
  \liminf_{\eps\to 0+} \eps^{s}N(\eps, K)=L^{-}\quad\text{and}\quad
  \limsup_{\eps\to 0+} \eps^{s}N(\eps, K)=L^{+}.
\end{equation}
Here the function $N$ can represent any counting function defined in
Section~\ref{SS:PackingAndCovering}.
\begin{definition}
  A metric space $(X,d)$ is called \textbf{doubling}, if there exists
  a number $M\in \N$ such that for all $x\in X$ and all $r>0$ there
  exist $x_1,\dots,x_M\in X$ satisfying
  \begin{align*}
    B(x,2r)\subseteq \bigcup_{i=1}^M B(x_i,r).
  \end{align*}
\end{definition}
It is good to note that every $s$-regular set is also a doubling
metric space, this can be seen from the fact that the doubling
property is equivalent to the existence of a so-called doubling
measure.

We conclude the section with the following simple observation
concerning $s$-regular sets.
\begin{lem}\label{L:differentEpsilons}
  Let $K$ be an $s$-regular set and $\eps>0$.  Then there is a constant
  $R=R(\eps)\in (0,\infty)$ such that
	\begin{equation}
	\frac{1}{R}\left(\frac{\eps_{1}}{\eps_{2}}\right)^s\leq \frac{\eS(\eps_{2}, K)}{\eS(\eps_{1}, K)}\leq R\left(\frac{\eps_{1}}{\eps_{2}}\right)^s,
	\end{equation}
	whenever $\eps\ge\eps_{1}, \eps_{2}>0$. 
\end{lem}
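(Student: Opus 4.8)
The plan is to deduce the double inequality directly from the Ahlfors regularity of the underlying measure $\mu$, using the comparability of the separation number $\eS(\delta,K)$ with $\delta^{-s}$ on the relevant range of scales. First I would fix the measure $\mu$ together with constants $0<\alpha,\beta,R_0<\infty$ witnessing $s$-regularity, so that $\alpha r^s\le\mu(B(x,r))\le\beta r^s$ for all $x\in K$ and all $0<r\le R_0$. The key auxiliary fact, which I would isolate as a short lemma or just prove inline, is that there is a constant $c=c(\alpha,\beta,s)>1$ such that
\begin{equation*}
\frac{1}{c}\,\delta^{-s}\le \eS(\delta,K)\le c\,\delta^{-s}
\end{equation*}
for every $0<\delta\le R_0$ (and in fact for every $\delta\le\eps$ once we allow the constant to depend on $\eps$, for $\eps>R_0$). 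This is the standard covering/packing argument: if $x_1,\dots,x_n$ is a maximal $\delta$-separated set in $K$, then the balls $B(x_i,\delta/2)$ are pairwise disjoint and each has $\mu$-measure $\ge\alpha(\delta/2)^s$, while their union sits inside the bounded set $X$ (or rather inside a fixed neighbourhood of $K$ of finite $\mu$-measure); this gives the upper bound on $n$. Conversely, maximality forces the balls $B(x_i,\delta)$ to cover $K$, and since $0<\mu(K)$ and each such ball has measure $\le\beta\delta^s$, we get the lower bound $n\ge\mu(K)/(\beta\delta^s)$.

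Once this comparability is in hand, the lemma is immediate for $\eps\le R_0$: for $\eps\ge\eps_1,\eps_2>0$ with $\eps\le R_0$ we have
\begin{equation*}
\frac{\eS(\eps_2,K)}{\eS(\eps_1,K)}\le \frac{c\,\eps_2^{-s}}{\frac{1}{c}\,\eps_1^{-s}}=c^2\left(\frac{\eps_1}{\eps_2}\right)^s,
\end{equation*}
and symmetrically for the lower bound, so $R=c^2$ works. For the general case $\eps>R_0$ one only needs to absorb the finitely-controlled behaviour of $\eS$ on the compact range $[R_0,\eps]$: by monotonicity \ref{C:monotonicity} the quantity $\eS(\delta,K)$ is sandwiched between $\eS(\eps,K)$ and $\eS(R_0,K)$ for $R_0\le\delta\le\eps$, both of which are positive finite constants depending only on $\eps$ (and $K$), so enlarging $R$ by a factor depending on $\eps$ handles all pairs $\eps_1,\eps_2\in[R_0,\eps]$ and, combined with the previous case and one more application of monotonicity, all mixed pairs as well. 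Thus $R=R(\eps)$ can be taken as $c^2$ times a bounded correction factor coming from the ratio $\eS(R_0,K)/\eS(\eps,K)$.

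The main obstacle is not really a deep one but a bookkeeping one: making sure the covering/packing estimate is applied on the correct scale range, since Ahlfors regularity only gives the two-sided bound on $\mu(B(x,r))$ for $r\le R_0$ and for centres $x\in K$. One must be a little careful that the balls $B(x_i,\delta/2)$ used for the upper bound are centred in $K$ (they are, since a $\delta$-separated set lies in $K$ by definition) and that one has a fixed finite upper bound for $\mu$ of their union — here $\mu(X)<\infty$ from the definition of $s$-regular set does the job. A second minor point is that the statement is phrased for $\eS$ specifically rather than a general counting function, which is convenient because $\eS$ is exactly what the covering/packing argument produces; no appeal to \ref{C:comparability} is needed, though one could alternatively phrase the proof through any $N$ satisfying \ref{C:monotonicity} and \ref{C:comparability} at the cost of an extra constant. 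I would present the clean $\eps\le R_0$ case first and then remark that the extension to arbitrary $\eps$ costs only an $\eps$-dependent constant, which is exactly what the statement allows.
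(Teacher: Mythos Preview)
Your proof is correct and follows essentially the same route as the paper: both arguments reduce to showing that $\delta^s\,\eS(\delta,K)$ is bounded above and below by positive constants on $(0,\eps]$, and then take the ratio. The only difference is cosmetic --- the paper invokes the earlier observation \eqref{sRegularInequalinies} (finiteness and positivity of the $\liminf$ and $\limsup$ of $\eps^s\eS(\eps,K)$) together with boundedness of $\eS$ away from $0$, whereas you prove the two-sided bound directly from the Ahlfors-regular measure via the standard packing/covering argument, which makes your version slightly more self-contained.
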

\begin{proof}
Fix $ \eps>0 $.
From observation \eqref{sRegularInequalinies} and the fact that  $\eps\mapsto \eS(\eps,K)$ is positive and bounded on any interval not containing $0$, it follows that there is an $ C=C(\eps)\in (0,\infty) $ such that
\[
\frac{1}{C} L^-\le \eS(\eps',K)\le C \,L^+
\]
for each $ 0<\eps'\le \eps $.
Thus, if $  0<\eps_{1}, \eps_{2}\le \eps$, one has
\begin{equation*}
\frac{\eS(\eps_{2}, K)}{\eS(\eps_{1}, K)}=\left(\frac{\eps_{1}}{\eps_{2}}\right)^s\frac{\eps_{2}^s \eS(\eps_{2}, K)}{\eps_{1}^s \eS(\eps_{1}, K)}\leq C^2\;\frac{L^+}{L^-}\left(\frac{\eps_{1}}{\eps_{2}}\right)^s,
\end{equation*}
and similarly
\[
\frac{L^-}{C^2L^+}\left(\frac{\eps_{1}}{\eps_{2}}\right)^s	\le\frac{\eS(\eps_{2}, K)}{\eS(\eps_{1}, K)}.
\]
Setting $ R(\eps)\coloneqq C^2\frac{L^+}{L^-} $ shows the statement.
\end{proof}

\subsection{Shift space and shifts of finite type}
For $N\ge 2$, let $A_N\coloneqq\{0,\dots\,N-1\}$ be the
\textbf{alphabet} and $\Sigma_N\coloneqq A_N^\N$ be the \textbf{space
  of addresses} with $N$ symbols.  Given some \textbf{address}
$\omega=(\omega_1,\omega_2,\dots)\in \Sigma_N$, we use the notation
$\omega=\omega_1\omega_2\dots$.  Similarly, if we have some $k\ge 0$,
we denote the first $k$ symbols of $\omega$ by
$\omega|_k\coloneqq \omega_1\omega_2\dots\omega_k\in
\{0,\dots\,n-1\}^k$. In case $k=0$ one has $\omega|_k\coloneqq\varnothing$ for
all $\omega\in\Sigma_N$, which is called \textbf{empty word}.  If
$\mathcal{T}\subseteq \Sigma_N$ is a subset,
$\mathcal{T}^*\coloneqq\{\omega|_k: \omega\in\mathcal{T}, k\ge 0\}$
denotes the \textbf{set of all finite words} of $\mathcal{T}$.  To
distinguish between members of $\mathcal{T}$ and $\mathcal{T}^*$, we
use capital letters for finite words $I\in\mathcal{T}^*$, and Greek
lower-case letters for addresses $\omega\in\mathcal{T}$.

Given some word $I\in\mathcal{T}^*$, there exists by definition an
address $\omega\in\mathcal{T}$ such that $\omega|_k=I$ for some
$k\ge 0$. As $k$ does not depend on the choice of $\omega$,
$|I|\coloneqq k$ is well defined and called \textbf{length} of $I$.
The length of an address $\omega\in\mathcal{T}$ is set to
$|\omega|\coloneqq\infty$.  Similarly to addresses we define
$I|_k\coloneqq I_1I_2\dots I_k$ for all $I\in\mathcal{T}^*$ and
$0\le k\le |I|$.  We have
$\mathcal{T}^*=\bigcup_{k=0}^\infty \mathcal{T}_k$, where
$\mathcal{T}_k\coloneqq\{I: I\in\mathcal{T}^*, |I|=k\}$ denotes the
\textbf{set of finite words of length $k$}.  As a special case,
$\mathcal{T}_1\subseteq A_N$ for all $\mathcal{T}\subseteq \Sigma_N$
follows.

Given $x,y\in\mathcal{T}^*\cup\mathcal{T}$ such that $|x|\le |y|$, we
say $x$ and $y$ are \textbf{incomparable}, if $x_k\neq y_k$ for some
$1\le k \le |x|$. We write $x\prec y$, if $x_k=y_k$ for all
$1\le k\le |x|$. If $x\prec y$ does not hold we write $x\nprec y$. The \textbf{concatenation} $IJ$ of $I,J\in\Sigma_N$
is defined to be
$IJ\coloneqq I_1I_2\dots I_{|I|}J_1J_3\dots J_{|J|}\in\Sigma_N^*$.  We
agree that concatenation has higher precedence than the length
operator; that is $IJ|_k = (IJ)|_k$ for all $I,J\in\mathcal{T}^*$ and
$k\ge 0$.
For each $I\in\mathcal{T}^*$, the set
$[I]\coloneqq\{\omega\in\mathcal{T}:\omega_k=I_k \text{ for }
k=1,\dots,|I|\}$ is called \textbf{cylinder} of the word $I$.  We set
$[I]\coloneqq \mathcal{T}$, if $I$ is the empty word in $\mathcal{T}$.

We equip $\Sigma_N$ with the product topology, which is induced for
example by the metric $(\omega,\omega')\mapsto \frac{1}{2^n}$, where
$n\coloneqq\max\{k\ge 0:\omega|_k=\omega'|_k\}$.  Thus $\Sigma_N$ is a
compact metric space.  The mapping $\sigma_N:\Sigma_N\to\Sigma_N$,
$\omega=\omega_1\omega_2\dots\mapsto\omega_2\omega_3\dots $ is called
\textbf{full shift} on $\Sigma_N$. It is continuous with respect to
the product topology.
We will simply write $\sigma$ in the cases when the subscript $N$ will be clear form the context.

Let $\T\subseteq \Sigma_N$ be an $ \sigma $-invariant subset, that is, $ \sigma^{-1}(\T)=\T $.
We say that $(\T,\sigma)$ is a \textbf{shift of finite type}, if there is an irreducible and aperiodic matrix $A\subseteq \{0,1\}^{N\times N}$ (the so-called \textbf{transition matrix})
such that
\begin{equation*}
\T=\left\{\omega\in\Sigma_N: A_{\omega_{n},\omega_{n+1}}=1\quad\text{for all $n\in\en$} \right\}.
\end{equation*}
Note that for simplicity we write $ (\T,\sigma) $ instead of $ (\T,\sigma|_{\T}) $.
Now if $f:\T\to\er$ is a continuous function, denote for each $ n\ge0 $ by $S_nf:\T\to\er$ the mapping defined by
\begin{equation*}\label{eq:ergodicSum}
S_nf(\omega)\coloneqq f(\omega)+f(\sigma\omega)+\cdots+f(\sigma^{n-1}\omega).
\end{equation*}
Also, define for $n\ge 0$ the \textbf{$n$-th variation} of $f$ by 
\begin{equation*}
\var_n(f)\coloneqq\sup\{|f(\omega)-f(\tau)|: \omega\in\T, \omega|_n=\tau|_n \}.
\end{equation*}
Although  $ \var_n(f)$ depends on $ \T $, and this dependence is not reflected by the notation,
throughout the paper the subshift $ (\T,\sigma) $ should always be clear from the context.
For $0<\alpha<1$ we in addition define
\begin{equation*}
|f|_\alpha\coloneqq\sup_{n\geq 0}\frac{\var_n(f)}{\alpha^n},
\end{equation*}
and call $f$ \textbf{$\alpha$-H\"older continuous} if $|f|_\alpha<\infty$.

We say that functions $f,g:\T\to\er$ are \textbf{cohomologous}, if there is a continuous function $h:\T\to\er$ such that $f-g=h\circ\sigma-h$.
A function $f$ will be called \textbf{lattice}, if it is cohomologous to a function that takes values in a proper closed (additive) subgroup of $\er$.

If $\T\subseteq\Sigma_N$ is a subset, we can represent $\T\cup\T^*$ as a subset of $\Sigma_{N+1}$ by identifying $I\in\T^*$ with $\omega_I\coloneqq(I_1,I_2,\dots,I_{|I|},N,N,\dots)\in\Sigma_{N+1}$, 
which then naturally extends all the notions defined above to $\T\cup\T^*$.
In particular the following lemma holds, proof of which is straightforward:
\begin{lem}\label{compactnesslemma}
Let $m\ge 0$ and $\T\subseteq \Sigma_N$ be compact.
Then $ \{ I\in \T\cup \T^*:|I|\ge m\}$ is compact in $\Sigma_{N+1}$.
\end{lem}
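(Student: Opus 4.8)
The plan is to exhibit the set $S_m\coloneqq\{I\in\T\cup\T^*:|I|\ge m\}$ as a closed subset of the compact metric space $\Sigma_{N+1}$; compactness then follows since closed subsets of compact metric spaces are compact. Recall that under the identification $I\mapsto\omega_I=(I_1,\dots,I_{|I|},N,N,\dots)$, a finite word $I\in\T^*$ becomes the address in $\Sigma_{N+1}$ that agrees with $I$ on its first $|I|$ coordinates and is constantly equal to the extra symbol $N$ thereafter, while an address $\omega\in\T$ is sent to itself (viewed in $\Sigma_{N+1}$, where it never uses the symbol $N$). So $S_m$ is a subset of $\Sigma_{N+1}$, and it suffices to show it is sequentially closed.

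First I would take a sequence $(\eta^{(j)})_j$ in $S_m$ converging to some $\eta\in\Sigma_{N+1}$; I must show $\eta\in S_m$. Each $\eta^{(j)}$ is either $\omega_{I^{(j)}}$ for some $I^{(j)}\in\T^*$ with $|I^{(j)}|\ge m$, or it is an address $\omega^{(j)}\in\T$. Convergence in the product metric means: for every $k$, eventually $\eta^{(j)}|_k=\eta|_k$. The key structural fact is that, because each $\eta^{(j)}$ comes from $\T\cup\T^*$, once a coordinate of $\eta^{(j)}$ equals the symbol $N$, all subsequent coordinates equal $N$ as well (the tail is frozen at $N$), and the prefix before the first $N$ lies in $\T^*$, and moreover that prefix has length at least $m$. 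Two cases arise for the limit $\eta$.

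If $\eta$ never uses the symbol $N$, fix any $k$; for large $j$ we have $\eta^{(j)}|_k=\eta|_k$, and since $\eta|_k$ uses no $N$, the word $\eta^{(j)}$ has not yet hit its frozen-$N$ tail within the first $k$ coordinates, so $\eta^{(j)}|_k$ is a legitimate length-$k$ prefix of an element of $\T$; hence $\eta|_k=\eta^{(j)}|_k\in\T_k\subseteq\T^*$ (using that prefixes of elements of $\T$ are in $\T^*$). As this holds for all $k$ and $\T$ is closed in $\Sigma_N$, we get $\eta\in\T$, and since addresses have length $\infty\ge m$, $\eta\in S_m$. If instead $\eta$ does use the symbol $N$, let $\ell$ be the position of its first $N$; I would argue $\ell>m$ (equivalently the prefix has length $\ge m$): for $k=\ell$, for large $j$ we have $\eta^{(j)}|_\ell=\eta|_\ell$, so $\eta^{(j)}$ already has an $N$ at position $\ell$, forcing $\eta^{(j)}=\omega_{I^{(j)}}$ with $|I^{(j)}|\le \ell-1$ but also $|I^{(j)}|\ge m$, and in fact the frozen tail forces $|I^{(j)}|=\ell-1$ and $I^{(j)}=\eta|_{\ell-1}$ for all large $j$; hence $\eta|_{\ell-1}\in\T^*$ with length $\ell-1\ge m$, and $\eta=\omega_{\eta|_{\ell-1}}\in S_m$. (The edge case $m=0$ is immediate since then $S_m=\T\cup\T^*$, whose compactness is classical; and the case where $\eta$'s tail is not eventually constant $N$ cannot occur as a limit, by the frozen-tail observation applied coordinate by coordinate.)

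The only mild subtlety — what I would be most careful about — is the bookkeeping that a convergent sequence of such encoded words forces the limit's structure (either no $N$ at all, or an $N$ whose position is stable and whose preceding prefix stabilizes), which is exactly where the hypothesis $|I^{(j)}|\ge m$ gets transmitted to the limit; everything else is the routine fact that $\T$ closed implies $\T\cup\T^*$ is closed in $\Sigma_{N+1}$, together with compactness of $\Sigma_{N+1}$.
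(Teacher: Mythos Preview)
Your argument is correct and is exactly the straightforward verification one has in mind: show the set is closed in the compact space $\Sigma_{N+1}$ by a case analysis on whether the limit point contains the symbol $N$. The paper in fact omits the proof entirely, stating only that it is straightforward, so your write-up is precisely the kind of routine check intended.
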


\subsection{Renewal theorems} 
Assume $ (\T,\sigma) $ to be a shift of finite type.
Let $f_*,g_*$ be $ \alpha $-H\"older continuous on $\T\cup\T^*$ and let $f,g$ be their respective restrictions to $\T$.
Suppose that $f_*>0$ on $\T\cup\bigcup_{i=k}^{\infty}\T_i$ for some $k\ge 0$, and $g_*\geq0$, but not identically $0$, satisfying $\var_n(g_*)=0$ for some $n\in\N_0$.
Suppose furthermore that $f$ is non-lattice.
Let $G:\er\to\er$ be a nonnegative monotone function .
Define $P(L)\subseteq\er\times(\T\cup\T^*)$ by
\begin{equation}\label{P(L)}
P(L)\coloneqq \left\{(t,I): S_{|I|+1}f_*(IL)>t\geq S_{j}f_*(IL),\; j\leq |I|\right\}
\end{equation}
and $N_{G}:\T\cup\T^*\to\er$ by
\begin{equation}\label{N_G(a,L)}
N_{G}(a,L)\coloneqq \sum_{I} g_{*}(IL)G\left(S_{|I|+1}f_*(IL)-a\right)\chi_{P(L)}(a,I).
\end{equation}
The following renewal-type theorem was proved by S. Lalley in \cite{Lalley1989}.
\begin{prop}[\cite{Lalley1989}, Corollary~3.2]\label{renewal_theorem}
Under the conditions above one has
\begin{equation*}
N_{G}(a,\omega)\sim e^{a\delta}\int_{0}^{\infty} G(t)\;F(\omega,dt),
\end{equation*}
whenever $\omega\in\T\cup\bigcup_{i=k}^{\infty}\T_i$.
Here $\delta$ is the unique zero of the pressure function $t\mapsto p(-t f)$,
and $F$ is of the form 
\begin{equation*}
F(\omega,t)=C_*(\omega)\left(e^{\delta \min(t-f_*(\omega),0)}-e^{-\delta f_*(\omega)}\right),
\end{equation*} 
where $C_*$ is a positive continuous function on $\T\cup\T^*$ depending only on $f_*$ and $g_*$.
\end{prop}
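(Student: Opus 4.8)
The statement is \cite[Corollary~3.2]{Lalley1989}; here is the route I would take to establish it. The plan is to follow the classical strategy for renewal theorems, transported into the symbolic/thermodynamic setting: first rewrite the sum defining $N_G(a,\omega)$ as a Markov renewal equation, then solve that equation via a Fourier--Laplace transform together with spectral analysis of the Ruelle transfer operator attached to $-\delta f$, and finally extract the pointwise asymptotics by a Tauberian (key renewal) argument.

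First I would set up the renewal equation. Using the cocycle identity $S_{|I|+1}f_*(IL)=f_*(IL)+S_{|I|}f_*(\sigma(IL))$ and peeling off the first letter of $I$ (writing $I=iJ$), the membership condition $(a,iJ)\in P(L)$ factors as $\ind{\{a\ge f_*(iJL)\}}$ times the condition $(a-f_*(iJL),J)\in P(\cdot)$ taken along the shifted path $JL$. This yields a recursion expressing $N_G(a,\omega)$ through the values $N_G(a-f_*(i\omega),i\omega)$, summed over the admissible first letters $i$, plus a boundary term coming from the empty word. After the normalisation $u(a,\omega):=e^{-a\delta}N_G(a,\omega)$ this becomes a genuine renewal equation $u=z+u\ast\mathrm d\mathcal F$ whose kernel is assembled from the weights $e^{-\delta f_*(i\omega)}$; the choice of $\delta$ as the zero of $t\mapsto p(-tf)$ is precisely what makes this kernel conservative rather than defective, since its total mass is governed by the transfer operator $(\mathcal L_{-\delta f}v)(\omega)=\sum_i e^{-\delta f_*(i\omega)}v(i\omega)$.

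Next I would invoke the thermodynamic formalism. Because $f$ is $\alpha$-H\"older on the shift of finite type $(\T,\sigma)$ and $p(-\delta f)=0$, the Ruelle--Perron--Frobenius theorem gives that $\mathcal L_{-\delta f}$, acting on $\alpha$-H\"older functions, has a simple leading eigenvalue $1$ with a strictly positive eigenfunction $h$ and a dual eigenmeasure $\nu$, and is quasi-compact with a spectral gap; the $h$-transformed kernel is then a genuine Markov kernel with stationary measure $h\,\mathrm d\nu$ and finite positive drift $\bar f=\int f\,h\,\mathrm d\nu$. The non-lattice hypothesis on $f$ would be used exactly once, but crucially: it forces, for every real $s\ne 0$, the spectral radius of the perturbed operator $\mathcal L_{-(\delta+\mathrm i s)f}$ to be strictly less than $1$, because a modulus-one eigenvalue there would make $sf$ cohomologous to a $2\pi\zet$-valued function. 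It follows that the Fourier--Laplace transform of $a\mapsto u(a,\omega)$ is meromorphic across the real axis with a single simple pole at $s=0$, whose residue can be read off from $h$, $\nu$, $\bar f$ and $g_*$.

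Finally I would carry out the Tauberian step. The spectral gap together with the non-lattice exclusion of all poles other than $s=0$ allows a Wiener--Ikehara type argument (equivalently Lalley's renewal lemma from \cite[\S 3]{Lalley1989}) that upgrades the transform asymptotics to the pointwise limit $u(a,\omega)\to$ a quantity proportional to $\int_0^\infty G(t)\,\mathrm dt$, which after undoing the normalisation is exactly $N_G(a,\omega)\sim e^{a\delta}\int_0^\infty G(t)\,F(\omega,\mathrm dt)$; the density of $F(\omega,\cdot)$ producing the factor $e^{\delta\min(t-f_*(\omega),0)}-e^{-\delta f_*(\omega)}$ is the overshoot/age distribution of the renewal process killed below level $f_*(\omega)$, and $C_*(\omega)$ is the positive $\alpha$-H\"older normalising density assembled from $h$, $\nu$ and $g_*$. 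The extension of the conclusion from $\omega\in\T$ to all $\omega\in\T\cup\bigcup_{i\ge k}\T_i$ would be handled by the identification of $\T\cup\T^*$ inside $\Sigma_{N+1}$ together with Lemma~\ref{compactnesslemma}, which keeps the H\"older estimates uniform on that set. The hardest part will be making the passage from the spectral picture to the pointwise asymptotics uniform in $\omega$: one must convert the qualitative bound $\rho(\mathcal L_{-(\delta+\mathrm i s)f})<1$ for $s\ne 0$ into an honest integrable-decay estimate for the transform at infinity, and one must track with care the boundary terms generated by the finite words, where the monotonicity of $G$ and the uniform positivity of $f_*$ on $\T\cup\bigcup_{i\ge k}\T_i$ are exactly what make the relevant overshoot integrals converge.
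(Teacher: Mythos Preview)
The paper does not prove this proposition; it is quoted as \cite[Corollary~3.2]{Lalley1989} and used as a black box, with only the remark that Lalley's statement for $k=1$ extends verbatim to general $k$. So there is no ``paper's own proof'' to compare against.

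Your outline is a reasonable high-level summary of the ideas behind Lalley's symbolic renewal theorems: peel off the first symbol to obtain a renewal-type recursion, normalise by $e^{-a\delta}$ so that the transfer operator $\mathcal L_{-\delta f}$ has leading eigenvalue $1$, invoke Ruelle--Perron--Frobenius for the spectral gap, use the non-lattice hypothesis to rule out peripheral spectrum for the complex perturbations, and finish with a key-renewal/Tauberian step. One caveat: Lalley's actual argument in \cite{Lalley1989} is organised somewhat differently---he reduces to a renewal theorem for functionals of a Markov chain and proves the latter by a coupling/direct argument rather than via Fourier--Laplace transforms and Wiener--Ikehara. Your Fourier-analytic route is a legitimate alternative, but it is not quite what is in the cited source. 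Also, your remark that the extension to finite words uses Lemma~\ref{compactnesslemma} is not how the paper handles it: the paper simply asserts that Lalley's proof goes through unchanged for $k\ge 1$, without reproving anything.
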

Note that in \cite{Lalley1989} the result is stated only with $k=1$, but the exact same proof holds for any $k\ge 1$.


\label{S:sTrees}

This section is devoted to the observation that the $s$-regular sets
can be characterized by the existence of a tree satisfying a simple
list of properties.  We start the section by defining this kind of
tree, which we will call an $s$-tree.

\begin{definition}
  Let $(X,d)$ be a metric space, $N\ge 2$, $s\in[0,\infty)$ and
  $\varnothing\not=\T\subseteq\Sigma_N$. Suppose there exist some constants
  $\rho,C,D\in(0,\infty)$ such that for every $I\in\mathcal{T}^*$
  there are $x_I\in X$ and $0<r_{I}<\infty$ satisfying the following
  properties:
  \begin{enumerate}[label={\rm (T\arabic*)}]
  \item\label{jedna} $d (x_I,x_J)\geq C(r_I+r_J)$ for all
    $J\in\mathcal{T}^*$ such that $I$ and $J$ are incomparable;
  \item\label{dva}
    $\diam(\{x_{IJ}: J\in\Sigma^*_N, IJ\in\mathcal{T}^*\})\leq Dr_I$;
    \item\label{tri} For all $n\geq 0$ one has
    \begin{equation*}
      \sum_{\substack{|J|=n\\ IJ\in\T^*}}r_{IJ}^s =r_I^s;
    \end{equation*}
  \item\label{ctyri} $r_I\to 0$ as $|I|\to\infty$;
  \item\label{pet} $r_{Ij}\geq\rho r_I$, if $j\in A_N$ and
    $Ij\in\mathcal{T}^*$.
  \end{enumerate}
  Then the triple
  $\mathrm{T}=(\T,\{x_I\}_{I\in\mathcal{T}^*},\{r_I\}_{I\in\mathcal{T}^*})$
  will be called \textbf{$s$-tree} in $X$.  A set $K\subseteq X$ defined
  by
  $K \coloneqq\{x_{\omega}:\omega\in\T, x_{\omega}\;\text{exists}\}$,
  where $x_\omega\coloneqq\lim_{n\to\infty}x_{\omega|_n}$, will be
  called the $s$-set \textbf{generated} by the $s$-tree $\mathrm{T}$.  Given
  some finite word $I\in\T^*$, we define
  $K_I\coloneqq\{x_\omega\in K:\omega\in\mathcal{T}, I\prec\omega\}$.
\end{definition}
\begin{rem}\label{kleinereins}
From \ref{ctyri} and \ref{pet} it follows immediately, that $0<\rho<1$.
From \ref{tri} it follows, that $ r_{IJ}\le r_I $ for all $ I,J\in\T^* $ such that $ IJ\in\T^* $.
In addition, we may always assume that $r_\varnothing=1$ and
\begin{align}\label{D_big_enough}
C\rho<D.
\end{align}
\end{rem}
\begin{rem}\label{R:SSStoStree}
  Note that the existence of an $s$-tree can be seen as a natural
  counterpart of an open set condition used e.g. in the context of
  self-similar sets.  Indeed, let $K$ be a self-similar set in $\er^d$
  generated by similarities $\varphi_1,\dots,\varphi_{N}$ with contraction
  ratios $r_i$. Let $U$ be a bounded feasible open set in (OSC) for
  this system of mappings.  Pick $x_{\varnothing}\in U$ arbitrary and
  put $D\coloneqq\diam U$, $C\coloneqq\dist(x_{\varnothing},U^c)$ and
  $\rho\coloneqq\min_{i}r_i$.  Define $r_I$ and $\varphi_I$ in the usual way for
  every $I\in\Sigma_N$, and put $x_I\coloneqq\varphi_I(x_{\varnothing})$.  Then it
  is easy to verify that the triple $(\Sigma_N,\{x_I\},\{r_I\})$ is an
  $s$-tree (with constants $C$, $D$ and $\rho$ as above) that
  generates $K$ is the sense of the above definition.

It may be also worth noting that there is also some very mild
counterpart to the strong open set condition in the sense that we can
additionally assume that $\{x_I\}_{I\in\T^*} \subseteq K$.  This
condition then implies that the overlaps of a two sets $K_I$ and $K_J$
with $I$ and $J$ incomparable are small which we will extensively use
in Section~\ref{S:measurability} (see Lemma~\ref{L:estimateOverlaps}).	

Note that in the self-similar case it is no problem to have condition $\{x_I\}_{I\in\T^*} \subseteq K$ satisfied.
To do this it is enough to consider $U$ to be feasible for (SOSC) and then pick $x_{\varnothing}\in K\cap U$.
\end{rem}

\begin{rem}\label{convention}
  From now on we will adopt convention that all concatenations will be
  assumed to be in $\T^*$.  For instance, condition \ref{tri} can then
  be written as $\sum_{|J|=n}r_{IJ}^s =r_I^s$ and condition \ref{pet}
  as $r_{Ij}\geq\rho r_I$ (without any additional assumptions).
\end{rem}

\begin{rem}\label{r:triAltToTri}
For $I\in\T^*$ and $n\in\en$ define $\T^n_I$ as the collection of all $\I\subseteq\bigcup_{k\ge n}\T_k$ satisfying
\begin{itemize}
\item for each $\omega\in\T$ there exists a unique $J\in\I$ such that $J\prec \omega$,
\item there is some $J\in\I$ such that $I\prec J$.
\end{itemize}
If $ \varnothing\neq \T\subseteq \Sigma_N $ is closed, condition \ref{tri} can be replaced by the following more technical, but also more flexible condition
\begin{enumerate}[label={\rm (T'\arabic*)}]
\setcounter{enumi}{2}
\item\label{triAlt}there is a constant $0<E<\infty$ such that for all $I\in\T^*$ one has
\begin{equation*}
\frac{1}{E} r_I^s\leq\sum_{IJ\in\I}r_{IJ}^s \leq E r_I^s
\end{equation*}
for each $\I\in\T^n_I$, $n\ge 0$.
\end{enumerate}

First we shall show, that \ref{tri} implies \ref{triAlt}.
We do so by construction a measure on $ \T $ with the help of \ref{tri}.
Define $ \mathcal{A}\coloneqq \{\varnothing\}\cup\{[I]:I\in\T^*\} $.
It is easy to see that $ \mathcal{A}$ is a semi-ring on $ \T $, that is one has 
\begin{itemize}
\item $ \varnothing\in \mathcal{A}$;
\item $ A,B \in \mathcal{A}$ implies $ A\cap B\in \mathcal{A} $;
\item  $ A,B \in \mathcal{A}$, then there exist finitely many pairwise disjoint $ C_i\in\mathcal{A} $ such that
$ B\smallsetminus A=\bigcup_{i=1}^nC_i $.
\end{itemize}
As $ \T $ is compact, each cylinder $ [I]\in\mathcal{A}  $ is open and closed in $ \T $.
Furthermore we have that $ \mathcal{A} $ is a generator of the Borel $ \sigma $-algebra of $ \T $.
Next define $ \nu(\varnothing)\coloneqq 0 $, $ \nu([I])\coloneqq r_I^s$ for all $ I\in\T^* $.
Clearly by \ref{tri} $ \nu $ is an additive, finite function on $ \mathcal{A} $.
As $ \mathcal{A} $ is a semi-ring, we also have that $ \nu $ is subadditive.
To show that $ \nu $ is $ \sigma $-subadditive, suppose $ A,A_i\in\mathcal{A} $ to be such that
$ A\subseteq\bigcup_{n=1}^\infty A_n$.
As $ \T $ is compact, there is a finite covering $ A\subseteq\bigcup_{i=1}^k A_{n_i}$, and the $\sigma $-subadditivity follows from the
subadditivity of $ \nu $.
Now using Carathéodory's extension theorem, there exists a unique measure $ \mu $ on $ \T $ such that $ \mu|_{\mathcal{A}}=\nu $.
From this it is easy to see that \ref{triAlt} is satisfied with constant $E= 1$. 

If $r_I$ satisfy \ref{triAlt}, then there are $\tilde r_I$ such that 
\begin{equation}\label{eq:comparable}
0<\frac{1}{E^{\frac{1}{s}}} r_I\leq \tilde r_I\leq E^{\frac{1}{s}} r_I<\infty
\end{equation}
for every $I\in\T^*$ and for which conditions \ref{jedna} to \ref{pet}  hold.
Those can be defined using
\begin{equation*}
\tilde r_I^s\coloneqq \sup_{m\ge 0}\left(\inf_{\I\in\T^m_I} \sum_{IJ\in\I} r_{IJ}^s\right)=\lim_{m\to\infty}\left(\inf_{\I\in\T^m_I} \sum_{IJ\in\I} r_{IJ}^s\right).
\end{equation*}
Note that above limit actually exists for fixed $I$, as $\T^{m}_I\supseteq \T^{m+1}_I$ for $m\ge 0$.

The validity of \eqref{eq:comparable} is clear from the definition.
Conditions \ref{jedna}, \ref{dva},\ref{ctyri} and \ref{pet} follow from \eqref{eq:comparable}.
It remains to verify \ref{tri}.
First we prove
\begin{equation}
 \sum_{\substack{|J|=n}}\tilde r_{IJ}^s \leq\tilde r_I^s.
\end{equation}
Suppose for contradiction that
\begin{equation*}
 \sum_{\substack{|J|=n}}\tilde r_{IJ}^s > \tilde r_I^s+\alpha.
\end{equation*}
for some $I$, $n$ and $\alpha>0$.
This implies that for every $m\ge 0$ there is an $\I^m\in\T^{m}_I$ such that
\begin{equation*}
\sum_{\substack{|J|=n}}\tilde r_{IJ}^s \geq \sum_{\substack{IJ\in\I^m}} r_{IJ}^s+\alpha.
\end{equation*}
Now define for each $m>|I|+n$ and $ J $ such that $IJ\in\T^*$ and $|J|=n$
\begin{equation*}
\I_J^m\coloneqq\{L\in\I^m: IJ\prec L\}.
\end{equation*}
By the definition of $\tilde r_{IJ}^s$ we have
\begin{equation*}
\liminf_{m\to\infty}\sum_{L\in\I_J^m} r_{L}^s \geq \tilde r_{IJ}^s
\end{equation*}
for every $J$.
Hence, for each $m_0>|I|+n$,
\begin{align*}
  \liminf_{m\to\infty}\sum_{\substack{IJ\in\I^m}} r_{IJ}^s 
&=\liminf_{m\to\infty}\sum_{\substack{|J|=n}}\sum_{L\in\I_J^m} r_{L}^s
\ge\sum_{\substack{|J|=n}}\liminf_{m\to\infty}\sum_{L\in\I_J^m} r_{L}^s \\
&\geq  \sum_{\substack{|J|=n}}\tilde r_{IJ}^s \geq \sum_{IJ\in\I^{m_0}} r_{IJ}^s+\alpha.
\end{align*}
Thus
\begin{align*}
  \liminf_{m\to\infty}\sum_{IJ\in\I^{m}} r_{IJ}^s\ge
    \liminf_{m\to\infty}\sum_{IJ\in\I^{m}} r_{IJ}^s+\alpha,
\end{align*}
which is not possible, as by \ref{triAlt} one has
\begin{align*}
0<\frac{1}{E}r^s_I\le \liminf_{m\to\infty}\sum_{IJ\in\I^{m}} r_{IJ}^s\le E r_I^s<\infty.  
\end{align*}

To prove the equality suppose for the contradiction that
\begin{equation*}
 \sum_{\substack{|J|=n}}\tilde r_{IJ}^s \le\tilde r_I^s-\alpha
\end{equation*}
for some $I$, $n$ and $\alpha>0$.
Let $k$ be the cardinality of the set $\{IJ\in\T^*: |J|=n\}$.
Due to the definition of $\tilde r^s_{IJ}$ we can find for every $m$ some $\I^m_{J}\in\T^m_{IJ}$ such that 
\begin{equation*}
\sum_{IJL\in\I^m_{J}} r_{IJL}^s-\frac{\alpha}{2k}\le\tilde r_{IJ}^s.
\end{equation*}
Consider now 
\begin{equation*}
\I^m\coloneqq \bigcup_{|J|=n} \{IJL\in\I^m_{J}\}.
\end{equation*}
Then
\begin{equation*}
\sum_{IJ\in\I^m} r_{IJ}^s-\frac{\alpha}{2}=\sum_{|J|=n}\left(\sum_{IJL\in\I^m_J} r_{IJL}^s-\frac{\alpha}{2k}\right)\le\sum_{|J|=n}\tilde r_{IJ}^s\le\tilde r_{I}^s -\alpha,
\end{equation*}
and so
\begin{equation*}
\tilde r_{I}^s+\frac{\alpha}{2}\leq \tilde r_{I}^s,
\end{equation*}
which is a contradiction to $0<\tilde r^s_{I}<\infty$.
\end{rem}

The section consist mainly of two lemmas, namely Lemma~\ref{L:sTreeImpliesRegular}, where we construct an $s$-regular
set given an $s$-tree and Lemma~\ref{L:RegularImpliessTree}, where
we assign an $s$-tree to every $s$-regular set.  This latter
observation is not really anything new, similar tree constructions
already appeared for instance in \cite{Arcozzi2014} and
\cite{Martina}.  The idea of the proof of
Lemma~\ref{L:sTreeImpliesRegular} is essentially the one of
\cite[5.3]{Hutchinson1981}.  In a way, it can be seen as pushing the
strategy of this proof to its boundaries.

However, this kind of equivalence between trees and $ s $-sets seems not to appear in the
literature, and for that reason we want to include the proofs here.

\begin{lem}\label{L:sTreeImpliesRegular}
Let $\mathrm{T}=(\T,\{x_I\}_{I\in\mathcal{T}^*},\{r_I\}_{I\in\mathcal{T}^*})$ be an
$s$-tree in a complete doubling metric space such that $ \T $ is closed.  Let $K$
be the $s$-set generated by $\mathrm{T}$.  Then $K$ is $s$-regular, and
$\mathrm{dim}_HK=\overline{\mathrm{dim}}_MK$.
\end{lem}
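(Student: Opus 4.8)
The plan is to construct a measure $\mu$ on $K$ witnessing $s$-regularity by pushing forward the natural cylinder measure on $\T$ (the measure $\mu$ from Remark~\ref{r:triAltToTri}, with $\mu([I]) = r_I^s$) via the coding map $\omega \mapsto x_\omega$. First I would verify that the coding map is well defined: by \ref{dva} the sets $\{x_{\omega|_n}\}_n$ form a Cauchy sequence (the diameters shrink like $Dr_{\omega|_n} \to 0$ by \ref{ctyri}), so $x_\omega$ exists by completeness of $X$; and the induced measure on $K$ assigns mass $r_I^s$ to each piece $K_I$. Normalising $r_\varnothing = 1$ gives $0 < \mu(K) = 1 < \infty$.

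The core of the argument is the two-sided ball estimate $\alpha r^s \le \mu(B(x,r)) \le \beta r^s$ for $x \in K$ and $r$ up to some fixed $R$. For the \emph{upper} bound: given $x = x_\omega \in K$ and $r$ small, choose the cut-set of words $I$ that are ``first along each branch to have $r_I$ comparable to $r$'' --- more precisely, truncate each $\omega' \in \T$ at the first $n$ with $r_{\omega'|_n} < r$ (or $\le cr$ for a suitable constant); by \ref{pet} such an $I$ satisfies $\rho r \le r_I < r$ (roughly), so all the relevant $r_I$ are within a bounded factor of $r$. The pieces $K_I$ meeting $B(x,r)$ have centres $x_I$ within distance $\sim r$ of $x$ and are $C(r_I + r_J)$-separated by \ref{jedna}; hence by a packing/volume count in the doubling space only boundedly many of them occur, say at most $M_0 = M_0(C, D, \rho, \text{doubling const})$. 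Each contributes $\mu(K_I) = r_I^s \le r^s$, giving $\mu(B(x,r)) \le M_0 r^s$. For the \emph{lower} bound: take the word $I$ along $\omega$ with $\rho r \le r_I < r$ coming from \ref{pet}; then $K_I \subseteq B(x_I, Dr_I) \subseteq B(x, (D+\text{something})r)$ by \ref{dva}, and after rescaling $r$ by the fixed constant $D+1$ we get a ball of radius $r$ containing some $K_I$ with $\mu(K_I) = r_I^s \ge (\rho r)^s / (\text{const})$, i.e. $\mu(B(x,r)) \ge \alpha r^s$. One then checks this works uniformly for $r \le R := $ (something like $\rho/(D+1)$) and extends to all $r \le \diam K$ trivially by monotonicity with adjusted constants. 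The doubling hypothesis is used \emph{only} in the upper bound, to control how many separated pieces fit near $x$.

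The remaining claim $\dim_H K = \overline{\dim}_M K$ is then immediate from general facts about $s$-regular sets recalled in Section~\ref{SS:DoublinsandSRegular}: an $s$-regular set has Hausdorff dimension $s$ (the measure $\mu$ with $\mu(B(x,r)) \asymp r^s$ forces $\Ha^s(K) \in (0,\infty)$ by the mass distribution principle and its converse), and equation \eqref{sRegularInequalinies} gives $\overline{\dim}_M K = s$ as well. So both equal $s$.

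The main obstacle I expect is the bookkeeping in the upper ball estimate: one must choose the truncation of $\T$ at level ``$r_I \approx r$'' carefully so that (i) the chosen words genuinely cover all of $K$ near $x$, using \ref{dva} to see that a piece $K_J$ with $x_J$ far from $x$ cannot reach into $B(x,r)$, (ii) the centres $x_I$ of the surviving pieces are pairwise $C\rho r$-separated via \ref{jedna}, and (iii) they all lie in a ball of radius $O(r)$ about $x$, so the doubling property caps their number independently of $r$ and $x$. Getting the constants to be uniform --- not depending on $\omega$ or on how deep the truncation goes --- is where the conditions \ref{jedna}--\ref{pet} all have to be used together, and is the part that requires genuine care rather than routine estimation.
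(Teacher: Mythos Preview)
Your plan is correct and matches the paper's proof essentially step for step: push forward the cylinder measure $\nu([I])=r_I^s$ via the coding map, get the lower ball bound from the piece $K_I$ along $\omega$ with $r_I\approx r$, and get the upper bound from the cut-set $\{I:r_I\text{ first below }r\}$ together with the doubling-space packing count on the separated centres $x_I$. One technical point to watch when you write it out: the pushforward only gives $\mu(K_I)\geq r_I^s$ (since $\pi^{-1}(K_I)\supseteq[I]$ but equality can fail), so for the upper bound you should not use ``$\mu(K_I)=r_I^s$'' directly but rather decompose $\mu=\sum_{I\in\I}\nu|_{[I]}\circ\pi^{-1}$ over the disjoint cylinders of the cut-set, which immediately yields $\mu(B(x,r))\leq\sum_{I\in\J}r_I^s$ --- this is exactly what the paper does.
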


\begin{proof}
  First note that \ref{dva} implies
  \begin{equation}\label{diamKI}
    \diam(K_I)\leq Dr_I.
  \end{equation}
  As $X$ is complete, the mapping $\pi:\mathcal{T}\to K$,
  $\pi(\omega)\coloneqq x_\omega=\lim_{n\to\infty}x_{\omega|_n}$ is by \ref{dva} and \ref{ctyri} well-defined.
  In addition, $\pi$ is continuous.
  Let $\nu$ be the Borel measure such
  that $\nu([I])=r_I^s$ for all $I\in\mathcal{T}^*$ (see Remark \ref{r:triAltToTri} for the construction).
  Set $\mu\coloneqq\nu\circ \pi^{-1}$.
  Then
  \begin{equation}
    \label{eq:lower}
    \mu(K_I)\geq\nu([I])= r_I^s.
    \end{equation}
    
  First we prove that there is a constant $\alpha>0$ such that
  \begin{equation}\label{lower}
    \frac{\mu(B(x,r))}{r^s}\geq \alpha
  \end{equation}
  for every $x\in K$ and $\diam K>r>0$.
  To do so fix some $x=x_\omega\in K$.
  Using \eqref{diamKI} and \ref{ctyri}, there is a unique number $n\ge 1$ such
  that
  \begin{align}\label{sandwich_estimate}
  \diam(K_{\omega|_n})<r\le \diam(K_{\omega|_{n-1}}).
  \end{align}
  Put $I\coloneqq \omega|_n$.
  Then $x\in K_I$ and
  \begin{align*}
    \frac{\mu(B(x,r))}{r^s}&\stackrel{\eqref{sandwich_estimate}}{\geq}
    \frac{\mu(K_I)}{r^s}\stackrel{\eqref{eq:lower}}{\geq}
    \frac{r^s_I}{r^s}
    \stackrel{\ref{pet}}{\geq} \frac{\rho^s r^s_{I|_{n-1}}}{r^s}
    \stackrel{\ref{dva}}{\geq}\frac{\rho^s \diam^s(K_{I|_{n-1}})}{D^sr^s}\\
    & \stackrel{\eqref{sandwich_estimate}}{\geq} r^s\frac{\rho^s}{D^sr^s}=\frac{\rho^s}{D^s}\eqqcolon\alpha>0.
  \end{align*}
  
  Next we prove that there is a constant $\beta>0$ such that
  \begin{equation}\label{upper}
    \frac{\mu(B(x,r))}{r^s}\leq \beta
  \end{equation}
  for every $x\in K$ and $1>r>0$.
  For each $\omega\in\mathcal{T}$ let $p(\omega)$ be the smallest
  number satisfying $r_{\omega|_{p(\omega)}}<r$. Then using \ref{pet}
  one has for all $\omega\in\mathcal{T}$
  \begin{align}\label{smallest}
    \rho r\leq \rho r_{\omega|_{p(\omega)-1}}\leq r_{\omega|_{p(\omega)}}<
    r.
  \end{align}
  Define
  $\I\coloneqq\{\omega|_{p(\omega)}:\omega\in\mathcal{T}\}\subseteq\mathcal{T}^*$. Clearly
  $\I$ is at most countable and $[I]\cap[J]=\varnothing$ for
  $I\neq J\in\I$. Thus by setting
  $\mu_I\coloneqq \nu|_{[I]}\circ \pi^{-1}$ we obtain
  \begin{align*}
    \mu=\sum_{I\in\I}\mu_I.
    \end{align*}
     Fix $x=x_\omega\in K$.
  Let $\J$ be the set of those $I\in\I$ such that
  $K_I\cap B(x,r)\not=\varnothing$.  Pick $I$ in $\J$ and let
  $\tilde\omega$ be such that $x_{\tilde{\omega}}\in K_I\cap
  B(x,r)$,
  then
  \begin{align*}
  d (x_I,x_\omega)\leq d (x_I,x_{\tilde{\omega}})+d(x_{\tilde{\omega}},x_\omega)
  \stackrel{\ref{dva}}{\leq} Dr_{\tilde{\omega}|_{p(\tilde{\omega})}}+r\stackrel{\eqref{smallest}}{\leq} (D+1)r.
  \end{align*}
  Moreover, if $J\in\J$ such that $I\not=J$ then by \ref{jedna} and
  (\ref{smallest})
  \begin{align*}
    d (x_I,x_J)\geq C(r_I+r_J)\geq 2C\rho r.
  \end{align*}
  Thus $\{x_I:I\in\J\}$ forms a $C\rho r$-separated
  subset of $B(x,(D+1)r)$.
  Using \eqref{D_big_enough}, one can choose by \cite[Lemma
  3.3]{baloghrohner2007} some constant $\beta\in\en$ such that  $\#\J\le\beta$ and $\beta$
  is independent of $\J$, $r$ and $x$.
  Hence
  \begin{align*}
    \frac{\mu(B(x,r))}{r^s}=&\frac{\mu(B(x,r)\cap K)}{r^s}\leq \frac{\mu(\bigcup_{I\in\J}K_I)}{r^s}\\
    \leq&
          \frac{\sum_{I\in\J}\mu_I(K)}{r^s}=\frac{\sum_{I\in\J}r^s_I}{r^s}\stackrel{\eqref{smallest}}{\leq}\frac{\#\J r^s}{r^s}=\#\J\leq
          \beta.
  \end{align*}
  So $K$ is $s$-regular which in particular implies
  $\dim_{H} K=\overline{\dim}_{M} K=s$.
\end{proof}

\begin{lem}\label{L:RegularImpliessTree}
Let $(X,d)$ be a complete metric space.
Suppose $K\subseteq X$ to be a compact $s$-regular
set with respect to a Borel measure $ \mu $ with corresponding constants $ \alpha $
and $ \beta $. Then there exists an $s$-tree $ \mathrm{T} $ generating $K$.
\end{lem}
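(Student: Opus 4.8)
The plan is to build the $s$-tree by the standard "cylinder dissection" of the $s$-regular set $K$: recursively partitioning $K$ into pieces of comparable diameter, with the multiplicative factor tuned so that the $s$-th powers of the diameters add up exactly. First I would fix a small scale parameter $0<\lambda<1$ and, for each level $n$, extract a maximal $\lambda^{n}$-separated subset $\{z_1^{(n)},\dots\}$ of $K$; the associated Voronoi-type cells $V_j^{(n)}\coloneqq\{x\in K:d(x,z_j^{(n)})\le d(x,z_i^{(n)})\text{ for all }i\}$ (with ties broken by a fixed ordering) form a Borel partition of $K$ into sets of diameter at most $2\lambda^n$ each containing a ball $B(z_j^{(n)},\lambda^n/2)\cap K$. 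By Ahlfors regularity, $\mu(V_j^{(n)})\asymp\lambda^{ns}$, and by the doubling property the number of children of a level-$n$ cell inside a fixed level-$(n+1)$ refinement is bounded by a constant $N_0$ depending only on $\alpha,\beta,\lambda$. Choosing $N\coloneqq N_0$ gives the alphabet; encoding the refinement tree as a subset $\T\subseteq\Sigma_N$ (and discarding the unused letters) produces a closed $\sigma$-invariant symbolic space with $x_I\coloneqq z_{j}^{(n)}$ the center of the cell coded by $I\in\T_n$.

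The delicate point is property \ref{tri}, the exact equality $\sum_{|J|=n}r_{IJ}^s=r_I^s$: the geometric diameters only give this up to constants. Here I would invoke Remark~\ref{r:triAltToTri}. It suffices to verify the flexible condition \ref{triAlt}: there is $0<E<\infty$ with $\tfrac1E r_I^s\le\sum_{IJ\in\I}r_{IJ}^s\le E r_I^s$ for every admissible antichain $\I\in\T_I^n$. I take $r_I^s\coloneqq \mu(\pi([I]))/\mu(K)$ where $\pi$ is the coding map (equivalently $r_I^s = \mu(V^{(n)}_{j(I)})/\mu(K)$, suitably interpreted so the pieces tile); then $\sum_{IJ\in\I}r_{IJ}^s=r_I^s$ holds on the nose by additivity of $\mu$, because each $\I\in\T_I^n$ corresponds to a Borel partition of the cell of $I$. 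One must still check that these $r_I$ are comparable to the geometric scales $\lambda^{|I|}$ — which is exactly Ahlfors regularity, $\mu(V^{(n)}_j)\asymp(\lambda^n)^s\asymp(\diam V^{(n)}_j)^s$ — so that \ref{jedna}, \ref{dva}, \ref{ctyri}, \ref{pet} follow: \ref{ctyri} from $r_I\asymp\lambda^{|I|}\to0$; \ref{pet} from $r_{Ij}\asymp\lambda^{|I|+1}\asymp\rho\,r_I$ with $\rho\asymp\lambda$; \ref{dva} from $\diam K_I\le 2\lambda^{|I|}\lesssim r_I$; and \ref{jedna} from the fact that incomparable words code cells whose centers are at distance $\gtrsim\lambda^{\min(|I|,|J|)}\gtrsim r_I+r_J$ (using the $\lambda^n$-separation of the centers and that a cell at level $n$ sits inside one at each lower level). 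Finally Remark~\ref{r:triAltToTri} converts the $r_I$ satisfying \ref{jedna}--\ref{pet} and \ref{triAlt} into modified radii $\tilde r_I$ (comparable to the old ones) satisfying the exact equality \ref{tri}, giving a genuine $s$-tree; and by construction $\pi$ is a bijection onto $K$ up to the small overlaps of the cell boundaries, so the generated $s$-set is $K$.

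The main obstacle I anticipate is \emph{bookkeeping the partition so that it is genuinely a tree}: the Voronoi cells at level $n+1$ need not refine those at level $n$, so one cannot naively nest them. The fix is to define the tree refinement by intersection — a level-$(n+1)$ cell is assigned to the (unique, by a fixed tie-break) level-$n$ cell containing its center — and then to verify that the resulting pieces $V_{I}$ still contain a ball of radius $\asymp\lambda^{|I|}$ and have diameter $\lesssim\lambda^{|I|}$; this requires a short geometric argument using the doubling property to control how far a child's cell can stray from its parent. An alternative, cleaner route, is to use the dyadic-cube construction for metric measure spaces (Christ cubes / Hytönen–Kairema), which directly produces a nested family $\{V_I\}$ of Borel sets with $B(x_I,c\lambda^{|I|})\subseteq V_I\subseteq B(x_I,C\lambda^{|I|})$ and $V_I=\bigsqcup_{|J|=1}V_{IJ}$; feeding these cubes into the scheme above makes \ref{jedna}--\ref{pet} immediate and leaves only the (now routine) passage through Remark~\ref{r:triAltToTri} for \ref{tri}. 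Either way, once the nested family with the ball-sandwich property is in hand, all five tree axioms and the identification of the generated set with $K$ are straightforward.
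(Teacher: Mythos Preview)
Your approach is essentially the paper's: maximal $\delta^n$-nets (the paper uses packings with $\delta<\tfrac16$), nearest-point parent assignment to build the tree, bounded branching from Ahlfors regularity, and $r_I$ defined as the $\mu$-measure of a ``cell'' so that \ref{tri} follows from additivity.

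The one place where the paper is slicker than your outline is in producing a genuine nested partition of $K$. Rather than Voronoi cells or importing Christ--Hyt\"onen--Kairema cubes, the paper first sets $K_I\coloneqq\{x_\omega:I\prec\omega\}$ (the natural cylinder images, which may overlap) and then corrects for the overlaps lexicographically:
\[
M_I\coloneqq K_I\setminus\bigcup_{\substack{|J|=|I|\\ J<_{\mathrm{lex}}I}}K_J,\qquad r_I\coloneqq\mu(M_I)^{1/s}.
\]
A short set-theoretic computation gives $M_I=\bigsqcup_{|J|=n}M_{IJ}$ for every $n$, so \ref{tri} holds exactly with no appeal to \ref{triAlt} or Remark~\ref{r:triAltToTri}. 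The inclusions $K\cap B(x_I,C_\delta\delta^{|I|})\subseteq M_I\subseteq K_I\subseteq B(x_I,D_\delta\delta^{|I|})$ then yield $r_I\asymp\delta^{|I|}$ and the remaining axioms. This sidesteps exactly the nesting bookkeeping you flagged as the main obstacle; your Christ-cube alternative would also work but is heavier machinery than needed.
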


\begin{proof}
	Without any restriction we may assume that $\spt \mu=K$.
Recall for $ \eps>0 $ the packing number $\Pe(\eps, K)$, and denote for simplicity $ P(\eps)\coloneqq\Pe(\eps, K) $.
Fix $\frac{1}{6}>\delta>0$. For each $n\ge 0$, let
  $\V_{n}=\{x_1^n,\dots,x_{P(\delta^n)}^n\}$ be some maximal
  $\delta^n$-packing of $K$. We equip $\V_n$ with some strict, total
  order $\prec_n$, which means
  \begin{align*}
    x_1^n\prec_n\dots\prec_n x_{P(\delta^n)}^n
  \end{align*}
  for all $n\in\N$.  Fix some arbitrary $x^0\in K$ and define
  $\V_{0} \coloneqq\{x^0\}$. This in particular means that for
  $n\in\N$
  \begin{enumerate}
  \item\label{epsdense} every $\V_{n}$ is $2\delta^n$-dense,
  \item\label{epsseparated} every $\V_{n}$ is $\delta^n$-separated.
  \end{enumerate}
  Define $\V\coloneqq\bigcup_{n\ge0}\V_{n}$.  We consider $\V$ to be the set
  of vertices of a oriented tree $V$ with root $x^0$, where the set of
  edges $\E$ is defined in the following way: One has $(x,y)\in \E$ if
  and only if
  \begin{itemize}
  \item $x\in\V_n$ and $y\in\V_{n+1}$ for some $n\in\en_0$,
  \item $ d (x,y)= d (y,\V_n)$,
  \item if $ d (z,y)= d (y,\V_n)$ for some $z\in\V_n$, then
    $x\prec_n z$.
  \end{itemize}
  The last of the above properties ensure, that two distinct vertices
  in $\mathcal{V}_n$ cannot have a common successor. Hence, given
  $n>0$, for each $y\in\V_n$ there exists exactly one $x\in\V_{n-1}$
  such that $(x,y)\in\E$. Now assume $x,y\in\V_n$, $x\not=y$,
  $z\in\V_{n+1}$ and $(x,z)\in \E$. Then \eqref{epsdense} implies
  \begin{equation}\label{upperTnT}
    d (x,z)\leq 2\delta^n,
  \end{equation}
  and \eqref{epsseparated} implies
  \begin{equation}\label{lowerTnT}
    d (y,z)> \frac{\delta^n}{2}.
  \end{equation}
  Next note that the number off offsprings of any $x\in \V$ is bounded
  from above.  Indeed, let $x\in\V_{n}$ and $u_j\in\V_{n+1}$ for
  $j\in\Gamma$ be pairwise different offsprings of $x$.  Then by
  \eqref{upperTnT}
  \begin{equation}
    B\left(x,(2+\delta)\delta^n\right)\supseteq \bigcup_{j\in\Gamma}B(u_{i},\delta^{n+1}),
  \end{equation}
  and the balls $B(u_{i},\delta^{n+1})$ are pairwise disjoint.  Hence
  we can write
  \begin{equation}
    \beta \left(2+\delta\right)^{s}(\delta^{n})^{s}\geq \mu\left(B\left(x,(2+\delta)\delta^n\right)\right)
    \geq \sum_{j\in\Gamma}\mu(B(u_{i},\delta^{n+1}))\geq |\Gamma|\alpha \delta^s(\delta^{n})^{s},
  \end{equation}
  which implies
  \begin{align*}
    |\Gamma|\leq
    \frac{\beta}{\alpha}\cdot\left(\frac{2+\delta}{\delta}\right)^s\eqqcolon
    N.
  \end{align*}
  Also, each $x\in\V_n$ has at least one successor: As $\V_{n+1}$ is
  $2\delta^{n+1}$-dense in $K$, there has to be at least
  $y\in\V_{n+1}$ satisfying $d(x,y)<2\delta^{n+1}$.  Using
  $\delta<\frac{1}{6}$ this implies $d(x,y)<\frac{1}{3}\delta^n$, and
  by \eqref{epsseparated} we obtain $(x,y)\in\E$.  Hence $V$ can be
  represented by a subset $\T\subseteq\Sigma_N$ with $\V_n$
  corresponding to $\{x_I\}_{I\in \T^*}$ in the following way: If
  \begin{align*}
    (x^0,x^1_{i_1}),(x^1_{i_1},x^2_{i_2}),\dots,(x^{n-1}_{i_{n-1}},x^n_{i_n})\in\E
  \end{align*}
  is a path in $V$, then $I\coloneqq i_1 i_2\dots i_n \in \T_n$ and
  $x_I\coloneqq x^n_{i_n}$.  By \eqref{upperTnT} we have
  \begin{equation}\label{comparable}
    d (x_I,x_{IJ})\leq 2\sum\limits_{k=|I|}^{\infty}\delta^k=\frac{2}{1-\delta}\delta^{|I|}=:D_{\delta}\delta^{|I|}
  \end{equation}
  for every $I,J\in\T^*$, which shows that
  $\{x_{\omega|_n}\}_{n=0}^\infty$ is a Cauchy sequence. This allows
  us to define $x_\omega$ and $K_I$ for all $\omega\in\T$ and
  $I\in\T^*$ in the usual way. Furthermore,
  \begin{equation}\label{incomparable}
    \begin{aligned}
      d (x_I,x_{JiL})&\geq  d (x_I,x_{Ji})- d (x_{Ji},x_{JiL})\\
      &\geq \frac{\delta^{|I|}}{2}-\frac{2\delta^{|Ji|}}{1-\delta}
      \geq
      \left(\frac{1}{2}-\frac{2\delta}{1-\delta}\right)\delta^{|I|}=:2C_{\delta}\delta^{|I|}
    \end{aligned}
    \end{equation}
  when $I,JiL\in\T^*$, $|I|=|J|$, and $I$, $J$ are incomparable.  Combining
  \eqref{incomparable}, the fact that $ 2C_\delta<1 $ and \eqref{epsseparated}, we obtain
  \begin{equation*}
    d (x_I,x_J)\geq 2C_{\delta}\delta^{|I|}
  \end{equation*}
  provided $I,J\in\T^*$, $|I|\leq|J|$, $I$, $J$ incomparable.
  Next put
  \begin{equation*}
    r_I\coloneqq \mu\left(K_I\smallsetminus \bigcup_{\substack{|I|=|J|\\ J<_{\mathrm{lex}}I}} K_J\right)^{\frac{1}{s}}=:\mu(M_I)^{\frac{1}{s}},
  \end{equation*}
  where $<_{\mathrm{lex}}$ denotes the lexicographical order of finite
  words.  It is not difficult to see that for all $n\in\N$ and
  $I\in \T^*$,
  \begin{align*}
    \bigcup_{|J|=n}M_{IJ}=M_I,
  \end{align*}
  where the union on the left side is disjoint.  Indeed, the
  disjointness is clear from the definition, and one can write
  \begin{equation*}
    \begin{aligned}
      \bigcup_{|J|=n}M_{IJ}
      &=\bigcup_{|J|=n} \left(K_{IJ}\smallsetminus \bigcup_{\substack{|L|=|I|+n \\ L<_{\mathrm{lex}} IJ}}K_L\right)\\
      &=\left(\bigcup_{|J|=n} K_{IJ}\right)\smallsetminus \left(\bigcap_{|J|=n} \bigcup_{\substack{|L|=|I|+n \\ L<_{\mathrm{lex}} IJ}}K_L\right)\\
      &=K_{I}\smallsetminus \left(\bigcap_{|J|=n} \left(\bigcup_{\substack{|L|=|I| \\ L<_{\mathrm{lex}} I}}K_L\cup \bigcup_{\substack{|L|=n \\ L<_{\mathrm{lex}} J}}K_{IL}\right)\right)\\
      &=K_{I}\smallsetminus \left(\bigcup_{\substack{|L|=|I| \\
            L<_{\mathrm{lex}} I}}K_L\right)=M_{I}.
    \end{aligned}
  \end{equation*}
  From this, $r_I^s= \sum_{|J|=n}r_{IJ}^s$ for all $n\in\N$ and
  $I\in \T^*$ follows, which implies \ref{tri}.  By \eqref{comparable}
  and \eqref{incomparable} we have
  \begin{equation*}
   K\cap B(x_I,C_{\delta}\delta^{|I|})\subseteq M_I\subseteq K_I\subseteq B(x_I,D_{\delta}\delta^{|I|})
  \end{equation*}
  and therefore
  \begin{equation*}
  \begin{aligned}
  \alpha C_{\delta}^s(\delta^{|I|})^s&\leq \mu(B(x_I,C_{\delta}\delta^{|I|}))=\mu(K\cap B(x_I,C_{\delta}\delta^{|I|})) \\
  &\leq \mu(M_I)\leq \mu(B(x_I,D_{\delta}\delta^{|I|}))\leq \beta D_{\delta}^s(\delta^{|I|})^s
  \end{aligned}
  \end{equation*}
  which in particular implies
  \begin{equation}\label{equivalent}
    \tilde C_{\delta}\delta^{|I|}\leq r_I\leq\tilde D_{\delta}\delta^{|I|}
  \end{equation}
  for some $\tilde C_{\delta},\tilde D_{\delta}>0$.  Now, equations
  \eqref{comparable} and \eqref{incomparable} together with
  \eqref{equivalent} imply \ref{jedna} and \ref{dva} with appropriate
  $C$ and $D$
\end{proof}

We conclude the section with the following one simple but useful
observation concerning the $s$-trees.

\begin{lem}\label{prunnedTreeLemma}
  Let $(\T,\{x_I\}_{I\in\mathcal{T}^*},\{r_I\}_{I\in\mathcal{T}^*})$
  be an $s$-tree and suppose that \ref{tri} is replaced with \ref{triAlt}. Pick some $i(I)\in\Sigma_N$ for every $I\in\T^*$ in a way
  that $Ii(I)\in\T^*$.  Define for each $ I\in \T^* $
  \begin{equation*}
    \widetilde{\T}^*(I)\coloneqq\{J\in\T^*:\; IMi(IM)\nprec IJ\quad\text{for any}\quad M\in\T^*\}.
    \end{equation*}
  Then
  \begin{equation}\label{sumOfrIinTildaT}
    \sum_{\stackrel{J:J\in\widetilde{\T}^*(I)}{|J|=m}} r^s_{IJ}\leq E^{2}r_I^s\cdot(1-\rho^s)^{m}.
  \end{equation}
  for every $I\in\T^*$ and $m\in\en$.
 
\end{lem}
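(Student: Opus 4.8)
The plan is to iterate a one-step contraction estimate down the tree, after first reducing to the case in which the exact conservation law \ref{tri} holds. To organise the bookkeeping, for $k\ge 0$ I would put $\mathcal{A}_k\coloneqq\{J\in\widetilde{\T}^*(I):|J|=k\}$, so that the left-hand side of \eqref{sumOfrIinTildaT} is $\sum_{J\in\mathcal{A}_m}r_{IJ}^s$, and $\mathcal{A}_0=\{\varnothing\}$ (each forbidden word $IMi(IM)$ has length strictly larger than $|I|$, hence cannot be a prefix of $I$, so $\varnothing\in\widetilde{\T}^*(I)$). The essential preliminary step is the combinatorial claim that $\widetilde{\T}^*(I)$ has a one-step recursive description: for $k\ge 1$, a word $J=J'j$ with $|J'|=k-1$ lies in $\mathcal{A}_k$ if and only if $J'\in\mathcal{A}_{k-1}$ and $j\neq i(IJ')$. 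This drops out of the definition once one lists the proper prefixes of $IJ'j$: a forbidden word $IMi(IM)$ is a prefix of $IJ'j$ only when $IM$ is a proper prefix of $IJ'$, and in that case it is already a prefix of $IJ'$ (the obstruction to $J'\in\widetilde{\T}^*(I)$) unless $IM=IJ'$ and $i(IJ')=j$. Consequently $\mathcal{A}_k=\bigsqcup_{J'\in\mathcal{A}_{k-1}}\{J'j:\ IJ'j\in\T^*,\ j\neq i(IJ')\}$, a disjoint union; this is exactly what makes the sum in \eqref{sumOfrIinTildaT} telescope.

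Next I would pass from \ref{triAlt} to \ref{tri}. By Remark~\ref{r:triAltToTri} one can replace the weights $r_I$ by weights $\tilde r_I$ that satisfy the exact identity \ref{tri} (together with \ref{jedna}, \ref{dva}, \ref{ctyri}, \ref{pet}) and are comparable to the $r_I$ in the sense of \eqref{eq:comparable}, i.e. $E^{-1}r_I^s\le\tilde r_I^s\le E\,r_I^s$; since $\widetilde{\T}^*(I)$ and the symbols $i(\cdot)$ depend only on $\T$, the sets $\mathcal{A}_k$ are unaffected. From $r_{IJ}^s\le E\,\tilde r_{IJ}^s$ and $\tilde r_I^s\le E\,r_I^s$ one gets $\sum_{J\in\mathcal{A}_m}r_{IJ}^s\le E\sum_{J\in\mathcal{A}_m}\tilde r_{IJ}^s$, so \eqref{sumOfrIinTildaT} will follow from the inequality $\sum_{J\in\mathcal{A}_m}\tilde r_{IJ}^s\le (1-\rho^s)^m\,\tilde r_I^s$. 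Hence I may assume from now on that $r$ itself satisfies \ref{tri}.

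The main argument is then a one-line induction on $m$. Fix $V=IJ'\in\T^*$. Applying \ref{tri} with $n=1$ gives $\sum_{j:\,Vj\in\T^*}r_{Vj}^s=r_V^s$, while \ref{pet} gives $r_{Vi(V)}^s\ge\rho^s r_V^s$; subtracting, $\sum_{j\neq i(V),\,Vj\in\T^*}r_{Vj}^s\le(1-\rho^s)\,r_V^s$. Summing this over $J'\in\mathcal{A}_{m-1}$ and invoking the disjoint decomposition of $\mathcal{A}_m$ from the first step yields $\sum_{J\in\mathcal{A}_m}r_{IJ}^s\le(1-\rho^s)\sum_{J'\in\mathcal{A}_{m-1}}r_{IJ'}^s$, and since $\sum_{J\in\mathcal{A}_0}r_{IJ}^s=r_I^s$ this gives $\sum_{J\in\mathcal{A}_m}r_{IJ}^s\le(1-\rho^s)^m r_I^s$; undoing the reduction of the previous paragraph produces the factor $E^2$ and hence \eqref{sumOfrIinTildaT}.

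I expect the only real difficulty to be the combinatorial bookkeeping of the first step: one must unwind the universally quantified clause ``$IMi(IM)\nprec IJ$ for any $M\in\T^*$'' carefully enough to be certain that passing from level $m-1$ to level $m$ in $\widetilde{\T}^*(I)$ is exactly the operation of deleting, at each surviving node, the single child indexed by $i(\cdot)$. Once that is in place, both the contraction estimate and the reduction to \ref{tri} (where the two factors of $E$ enter, through \eqref{eq:comparable}) are routine; the one point that needs a little care in the reduction is that the weights $\tilde r_I$ still obey \ref{pet}, which is part of what Remark~\ref{r:triAltToTri} provides.
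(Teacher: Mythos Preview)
Your proof is correct and follows essentially the same strategy as the paper's: reduce to \ref{tri} via Remark~\ref{r:triAltToTri}, then iterate the one-step contraction $\sum_{j\neq i(V)}r_{Vj}^s\le(1-\rho^s)r_V^s$ coming from \ref{tri} and \ref{pet}. The only cosmetic difference is that you peel off the \emph{last} symbol of $J$ at each step, whereas the paper peels off the \emph{first} symbol, rewriting $\{J\in\widetilde{\T}^*(I):|J|=m\}$ as $\bigcup_{i\neq i(I)}\{iJ:J\in\widetilde{\T}^*(Ii),\ |J|=m-1\}$ and applying the induction hypothesis to each subtree rooted at $Ii$.
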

\begin{proof}
We will assume that condition \ref{tri} holds, the general case then follows from
Remark~\ref{r:triAltToTri}.  We will proceed by induction in $m$.
	
  Pick $I$. For $m=1$ formula \eqref{sumOfrIinTildaT} reduces to
  \begin{equation}\label{sumOfrIinTildaTm=1}
    \sum_{\stackrel{i:Ii\in\T^*}{i\not=i(I)}} r^s_{Ii}\leq r_I^s\cdot(1-\rho^s).
  \end{equation}
  To prove this we can write 
  \begin{equation*}
    1=\sum_{i:Ii\in\T^*} \left(\frac{r_{Ii}}{r_I}\right)^s=\sum_{\stackrel{i:Ii\in\T^*}{i\not=i(I)}}\left(\frac{r_{Ii}}{r_I}\right)^s
    +\left(\frac{r_{Ii(I)}}{r_I}\right)^s\geq \sum_{\stackrel{i:Ii\in\T^*}{i\not=i(I)}}\left(\frac{r_{Ii}}{r_I}\right)^s+\rho^s
  \end{equation*}
  which is the same as \eqref{sumOfrIinTildaTm=1}.
	
For the induction step, suppose that \eqref{sumOfrIinTildaT} holds up to $m-1$. Then
  \begin{equation*}
    \begin{aligned}
      \sum_{\stackrel{J:J\in\widetilde{\T}^*(I)}{|J|=m}} r^s_{IJ}
      =&\sum_{\stackrel{iJ:iJ\in\widetilde{\T}^*(I)}{|J|=m-1,}} r^s_{IiJ}=\sum_{\stackrel{i,J:J\in\widetilde{\T}^*(Ii)}{|J|=m-1, i\not=i(I)}} r^s_{IiJ}\\
      \leq&
      (1-\rho^s)^{m-1}\cdot\sum_{\stackrel{i:Ii\in\T^*}{i\not=i(I)}}
      r^s_{Ii}\leq r_I^s\cdot(1-\rho^s)^{m},
    \end{aligned}
  \end{equation*}
  which is what we want.
\end{proof}


\section{Measurability}\label{S:measurability}
\subsection{Setup and main results}\label{SS:setup}
In this whole section, 
$N$ will be a function satisfying \ref{C:monotonicity}-\ref{C:comparability}. 
Recall that those come with the constants $A$, $B$ and $G$ from the conditions \ref{C:separation}, \ref{C:lipschitzImage} and \ref{C:comparability}, respectively.

Let $(X,d)$ be a complete, doubling metric space and $K\subseteq X$ be an $s$-set.
Recall the conventions from Remark~\ref{convention} and let
$\mathrm{T}=(\T,\{x_I\}_{I\in\T^*},\{r_I\}_{I\in\T^*})$ be a corresponding
$s$-tree to $K$, with constants $C$, $D$, and $\rho$.
We will also assume \ref{triAlt} instead of \ref{tri} with the corresponding constant $E$.

Throughout this
section we impose several strong additional assumptions on $\mathrm{T}$, to
prove the main result, which concerns the asymptotic behaviour of the
function
\begin{align*}
  \eps\mapsto \eps^s N(\eps,K).
\end{align*}
 It is well known that both $\limsup$ and $\liminf$ of
this function are always positive and finite when $K$ is an $s$-set
(cf. Lemma~\ref{L:differentEpsilons}). The question whether the actual
limit exists is more delicate.  This section heavily uses the
techniques developed in \cite{Lalley1989}, especially the strategy of
the proof of the ``packing measurability'' for the limits sets of
Schottky groups.  The only essential difference is the fact that we
have to deal with overlaps, meaning that the unions
$K_I=\bigcup_{|J|=n}K_{IJ}$ are not necessarily disjoint
(cf. Lemma~\ref{L:estimateOverlaps}).  First we will assume that
\begin{enumerate}[label={\rm (M\thedefcounter)}]
  \refstepcounter{defcounter}\item\label{M:finiteType} $(\T,\sigma)$
  is a subshift of finite type,
  \refstepcounter{defcounter}\item\label{M:inclusionx_I}
  $\{x_I\}_{I\in\T^*} \subseteq K$,
  \refstepcounter{defcounter}\item\label{M:asymptotic}$r_{Ii}\leq
  Rr_{I}$ for some $0<R<1$.
\end{enumerate}

Note, that for $I,IJ\in\mathcal{T}^*$ it follows from \ref{M:inclusionx_I}, that
\begin{align}
\label{inclusion_x_IJ}
x_{IJ}\in K_I.
\end{align}
The case $I=\varnothing$ is clear, as
$K_\varnothing=K$.
Assume $\varnothing\neq I\in \mathcal{T}^*$.
As $x_{IJ}=x_\omega\in K=\bigcup_{|L|=|I|}K_L$ for some sequence
$\omega\in\mathcal{T}$, there is an $L\in\mathcal{T}_{|I|}$ such that
$x_{IJ}\in K_L$.
If $IJ$ is incomparable to $L$, then \ref{jedna} yields
$d(x_{IJ},x_{\omega|_n})\ge Cr_{IJ}$ for all $n\ge |L|$, which is a
contradiction to $x_\omega=\lim_{n\to\infty}x_{\omega|_n}$.
Thus $L\prec IJ$, which means $I=L$.

Observe that when imposing \ref{M:finiteType} and \ref{M:inclusionx_I} we, in particular, obtain the following:
there is a constant $F>0$ such that
\begin{equation}\label{eq:oppositeToT2}
\diam K_J\geq Fr_J
\end{equation}
for every $J\in\T^*$ (compare this with \eqref{diamKI}, which holds for any $s$-tree).
To prove this observation first note that since $(\T,\sigma)$ is a subshift of finite type (due to \ref{M:finiteType}),
there is some $p\in\en$ such that for every $J\in\T^*$ there are $I,\tilde I\in\T^*$, $I\not=\tilde I$ 
with $|I|=|\tilde I|=p$ and $JI,J\tilde I\in\T^*$. 
Now
\begin{equation*}
\diam K_J\stackrel{\eqref{inclusion_x_IJ}}{\geq} \dist(x_{JI},x_{J\tilde I})\stackrel{\ref{jedna}}{\geq} Cr_{JI}\stackrel{\ref{pet}}{\geq} C\rho^p r_J\eqqcolon Fr_J. 
\end{equation*}
In particular, this implies that there are
constants $\Upsilon_{m}>0$, $m\in\en$, such that for any
$I, J\in \mathcal{T}^*$
\begin{equation}\label{diam_estimate}
  \diam(K_{IJ})\geq \Upsilon_{|J|}\diam(K_{I}).
\end{equation}
Indeed, we can write
\begin{equation*}
\diam K_{IJ}\stackrel{\eqref{eq:oppositeToT2}}{\geq}  Fr_{IJ}\stackrel{\ref{pet}}{\geq}  F\rho^{|J|}r_I\stackrel{\ref{dva}}{\geq}  \frac{F\rho^{|J|}}{D}\diam K_I\eqqcolon \Upsilon_{|J|}\diam K_I.
\end{equation*}

Next we will suppose that 
\begin{measAssumption}\label{M:mappings}
	\begin{aligned}
	  \text{ there are bi-Lipschitz mappings $\varphi_i: X\to X$  and $W,\delta_0>0$ such that }\\ 
	  \varphi_{I}(K_J)=K_{IJ}\text{ and such that }
	   (K_I)_{r_IWG\delta}\subseteq \varphi_I(K_{G\delta}) \text{ for every } \delta\leq\delta_0.
	\end{aligned}
\end{measAssumption}
Here we used the usual notation
$\varphi_{I}\coloneqq\varphi_{I_{1}}\circ\dots\circ\varphi_{I_{|I|}}$ for
$I\neq \varnothing$, and $\varphi_\varnothing\coloneqq \mathrm{id}$.
In the applications we will usually assume that
$(\T,\sigma)=(\Sigma_{N},\sigma)$ is the full shift with $N$ symbols,
and the mappings $\varphi_{i}$ are then constructed as
$\varphi_{i}(x_{\omega})\coloneqq x_{i\omega}$, $\omega\in\T$.

Recall that $G$ is the constant guaranteed for the counting function $N$ by property \ref{C:lipschitzImage}.
Pick {\color{black} $\delta_0>\xi>0$} and let $\kappa^{\pm}_{i,J}=\kappa^{\pm}_{i,J}(G,\xi)$ be the optimal  constants such that
\begin{equation}\label{biLip1}
  \kappa^{-}_{i,J}\frac{r_{iJ}}{r_J} d(x,y)
  \leq d(\varphi_i(x),\varphi_i(y))
  \leq\kappa^{+}_{i,J}\frac{r_{iJ}}{r_J} d(x,y)
\end{equation} 
whenever $x,y\in \varphi_J(K_{G\xi})$. 

Note that by iterating \eqref{biLip1} one obtains
\begin{equation}\label{biLip2}
  \kappa^{-}_{I,J}\frac{r_{IJ}}{r_J} d(x,y)
  \leq d(\varphi_I(x),\varphi_I(y))
  \leq\kappa^{+}_{I,J}\frac{r_{IJ}}{r_J} d(x,y)
\end{equation}
whenever $x,y\in \varphi_J(K_{G\xi})$.
Here we denoted
\begin{equation}\label{kappaI}
  \kappa^{\pm}_{I,J}\coloneqq\prod\limits_{n=1}^{|I|}\kappa^{\pm}_{I_{n},\sigma^n(I)J}.
\end{equation}

In what follows we will assume that 
\begin{measAssumption}\label{M:kappa_to_one}
	\kappa^{\pm}_{I,J}\to 1\quad\text{and}\quad  \quad\text{as}\quad |J|\to\infty\quad \text{uniformly in $I$}.
\end{measAssumption}

Now put
\begin{align*}
  f_1(iJ):=\log\left(\frac{r_{J}}{r_{iJ}\kappa^{+}_{i,J}}\right)
  \quad\text{and}\quad 
  f_2(iJ)\coloneqq \log\left(\frac{r_{J}}{r_{iJ}\kappa^{-}_{i,J}}\right),
\end{align*}
which yields for $ IJ\in\T^* $, $I\neq \varnothing$,
\begin{equation}\label{422342552343236}
  f_1(IJ)=\log\left(\frac{r_{\sigma(I)J}}{r_{IJ}\kappa^{+}_{I_1,\sigma(I)J}}\right)
  \text{  and  }
  f_2(IJ)=\log\left(\frac{r_{\sigma(I)J}}{r_{IJ}\kappa^{-}_{I_1,\sigma(I)J}}\right).
\end{equation}
Set $f_1(\varnothing)\coloneqq f_2(\varnothing)\coloneqq 0$.  
Also, by \eqref{kappaI} we have for
all  $IJ\in\T^*$, $I\neq \varnothing$, (recall the definition of $S_nf$ in \eqref{eq:ergodicSum})
\begin{equation}\label{SIf}
  S_{|I|}f_{1}(IJ)=\log\left(\frac{r_{J}}{r_{IJ}\kappa^{+}_{I,J}}\right)
  \quad\text{and}\quad 
  S_{|I|}f_{2}(IJ)=\log\left(\frac{r_{J}}{r_{IJ}\kappa^{-}_{I,J}}\right).
\end{equation}

Assume additionally
\begin{measAssumption}\label{M:rho_Jcontinuous}
	\begin{aligned}
		&\frac{r_{i\omega|_n}}{r_{\omega|_n}}\to\rho_{i\omega}\in(0,1)\quad\text{as}\quad n\to\infty,
		\text{ and $\omega\mapsto \rho_{\omega}$ continuous on $\T$ and, }\\
		&\text{moreover, there are H\"older continuous
		extensions $\tilde f_i$ of $f_i$ to $\T\cup\T^*$}\\ &\text{such that
		$\tilde f_1(\omega)=\tilde f_2(\omega)=\log\left(\frac{1}{\rho_{\omega}}\right)\eqqcolon f(\omega)$.}
	\end{aligned} 
\end{measAssumption}
Note that $f$ is H\"older continuous and $f>0$.

Finally assume
\begin{measAssumption}\label{M:holder/nonlattice}
  \text{$f$ is a non-lattice function on $\T$} .
\end{measAssumption}

If one has an $s$-tree which satisfies above conditions, by switching to
a higher power of the underlying dynamics, one obtains another $s$-tree which generates
the same $s$-set.
The advantage of the new tree are smaller ratios, which is of importance in the later proofs.  
Thus before we proceed, we state the following technical lemma:
\begin{lem}\label{relabel_trick}
Let $\mathrm{T}$ be an $s$-tree and $K$ its generated $s$-set.
Suppose $\mathrm{T}$ satisfies \ref{M:finiteType}-\eqref{M:holder/nonlattice}.
Then for each $\eps >0$ there is an $s$-tree $\mathrm{T}'$, such that
\begin{itemize}
\item[(1)] $\mathrm{T}'$ satisfies \ref{M:finiteType}-\eqref{M:holder/nonlattice};
\item[(2)] $\rho',r'_{I'},\rho'_{\omega'}, R'<\eps $ for  $I'\in(\T')^*,\;\omega'\in\T'$;
\item[(3)] $\mathrm{T}'$ generates $K$ as well.
\end{itemize}
\end{lem}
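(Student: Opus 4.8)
The plan is to pass to the $m$-th power of the symbolic dynamics, for a suitably large $m\in\en$, and then rescale all the radii. I would fix $m\in\en$ and let $A_{N'}$ be the finite set of admissible words of $\T$ of length $m$, declaring $\beta\in A_{N'}$ to be allowed before $\beta'\in A_{N'}$ exactly when the concatenation $\beta\beta'$ is admissible in $\T$; this produces a subshift $(\T',\sigma')\subseteq\Sigma_{N'}$. For $I'\in(\T')^*$ and $\omega'\in\T'$ write $\widehat{I'}\in\T^*$ and $\widehat{\omega'}\in\T$ for the word, resp.\ address, of $\T$ obtained by concatenating the blocks, so that $|\widehat{I'}|=m|I'|$, the map $I'\mapsto\widehat{I'}$ is a bijection of $(\T')^*$ onto $\{I\in\T^*: m\mid|I|\}$, and $\omega'\mapsto\widehat{\omega'}$ is a homeomorphism of $\T'$ onto $\T$ conjugating $\sigma'$ to $\sigma^m$. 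Since $(\T,\sigma)$ is a shift of finite type it is topologically mixing, hence so is $\sigma^m$, and $(\T',\sigma')$ is accordingly again a shift of finite type, which is \ref{M:finiteType}. Now set $x'_{I'}\coloneqq x_{\widehat{I'}}$, $r'_{I'}\coloneqq r_{\widehat{I'}}$, and $\varphi'_{i'}\coloneqq\varphi_{\widehat{i'}}$ for $i'\in A_{N'}$ (so $\varphi'_{I'}=\varphi_{\widehat{I'}}$); since point addresses are not changed by $\widehat{\cdot}$, the tree $\mathrm{T}'\coloneqq(\T',\{x'_{I'}\},\{r'_{I'}\})$ generates the same $K$, which gives (3), and $\{x'_{I'}\}\subseteq\{x_I\}\subseteq K$, which gives \ref{M:inclusionx_I}.

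The remaining $s$-tree axioms \ref{jedna}--\ref{pet} (with \ref{triAlt} replacing \ref{tri}) and conditions \ref{M:asymptotic} and \eqref{M:mappings}--\eqref{M:holder/nonlattice} would then be checked for $\mathrm{T}'$; with one exception discussed in the next paragraph, every object attached to $\mathrm{T}'$ is the corresponding object of $\mathrm{T}$ read $m$ symbols at a time, so these are routine. Incomparable words of $\T'$ have incomparable $\widehat{\cdot}$-images, giving \ref{jedna} and \ref{dva} with the same $C,D$; a full antichain cover of $\T'$ maps to a full antichain cover of $\T$ by words of length divisible by $m$, giving \ref{triAlt} with the same $E$; \ref{ctyri} is immediate; iterating \ref{pet}, resp.\ \ref{M:asymptotic}, $m$ times gives \ref{pet}, resp.\ \ref{M:asymptotic}, for $\mathrm{T}'$ with $\rho'=\rho^m$, resp.\ $R'=R^m$. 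The $\varphi'_{i'}$ are bi-Lipschitz (finite compositions), $\varphi'_{I'}(K'_{J'})=K_{\widehat{I'J'}}=K'_{I'J'}$, and (as $K'=K$, $r'_{I'}=r_{\widehat{I'}}$) the parallel-set inclusion of \eqref{M:mappings} holds with the same $W,\delta_0$. Taking $\xi'\coloneqq\xi$, iterating \eqref{biLip1} exhibits $\kappa^{\pm}_{\widehat{I'},\widehat{J'}}$ as admissible bi-Lipschitz constants for $\varphi'_{I'}$ on $\varphi'_{J'}(K'_{G\xi})$, so $\kappa^{-}_{\widehat{I'},\widehat{J'}}\le{\kappa'}^{-}_{I',J'}\le{\kappa'}^{+}_{I',J'}\le\kappa^{+}_{\widehat{I'},\widehat{J'}}$; since both outer quantities tend to $1$ uniformly as $|\widehat{J'}|=m|J'|\to\infty$ by \eqref{M:kappa_to_one}, \eqref{M:kappa_to_one} holds for $\mathrm{T}'$. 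Telescoping $r'_{i'\omega'|_n}/r'_{\omega'|_n}=r_{\widehat{i'}\widehat{\omega'|_n}}/r_{\widehat{\omega'|_n}}$ into $m$ factors and applying \eqref{M:rho_Jcontinuous} for $\mathrm{T}$ gives $r'_{i'\omega'|_n}/r'_{\omega'|_n}\to\rho'_{i'\omega'}$, where $\rho'_{i'\omega'}\coloneqq\prod_{k=0}^{m-1}\rho_{\sigma^k\widehat{i'\omega'}}\in(0,1)$ and $\omega'\mapsto\rho'_{\omega'}$ is continuous; hence $f'(\omega')\coloneqq\log(1/\rho'_{\omega'})$ equals $S_mf(\widehat{\omega'})$, so $f'$ is H\"older and positive.

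The one verification above that is not purely formal is the existence of the H\"older extensions $\tilde f'_i$ of $f'_i$ to $\T'\cup(\T')^*$ (where $f'_i$ is built from $\mathrm{T}'$ just as $f_i$ is from $\mathrm{T}$) required by \eqref{M:rho_Jcontinuous}; I would obtain these from the $\tilde f_i$ by the block recoding — essentially $\tilde f'_i$ is, up to the discrepancy between $f'_i$ and $S_mf_i\circ\widehat{\cdot}$, an $m$-fold ergodic sum of $\tilde f_i$ transported by $\widehat{\cdot}$ — using that $\widehat{\cdot}$ carries H\"older functions to H\"older functions and that this discrepancy is controlled by the uniform convergence in \eqref{M:kappa_to_one}. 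The genuinely substantive step, however, is \eqref{M:holder/nonlattice}: that $f'$ is non-lattice on $\T'$, equivalently that $S_mf$ is non-lattice for $(\T,\sigma^m)$. Here I would use that, for a shift of finite type, a positive H\"older function is non-lattice precisely when the subgroup of $\er$ generated by its periodic orbit sums is dense. If $S_mf$ were lattice for $\sigma^m$, that subgroup for $(S_mf,\sigma^m)$ would be contained in $a\zet$ for some $a>0$; but every $\sigma$-periodic point $x$ of period $p$ is $\sigma^m$-periodic, and the sum of $S_mf$ along its $\sigma^m$-orbit taken over $p$ steps is $S_{mp}f(x)$, which equals $m\,S_pf(x)$ since $\sigma^px=x$, so $m\,S_pf(x)\in a\zet$, i.e.\ $S_pf(x)\in\tfrac{a}{m}\zet$, for all $\sigma$-periodic $x$; then the subgroup generated by the periodic orbit sums of $f$ lies in $\tfrac{a}{m}\zet$, contradicting that $f$ is non-lattice for $(\T,\sigma)$.

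Finally I would choose $m$ so large that $\rho^m<\eps$, $R^m<\eps$ and $\bar\rho^{\,m}<\eps$, with $\bar\rho\coloneqq\max_{\omega\in\T}\rho_\omega<1$; this makes $\rho'<\eps$, $R'<\eps$ and $\rho'_{\omega'}\le\bar\rho^{\,m}<\eps$. To make the radii small as well, replace each $r'_{I'}$ by $\lambda\,r'_{I'}$ with $\lambda\in(0,1)$ chosen so that $\lambda\,r_\varnothing<\eps$ (note $r'_{I'}=r_{\widehat{I'}}\le r_\varnothing$): scaling all radii by the constant $\lambda$ leaves $K$, the maps $\varphi'_{i'}$, the quotients $r'_{i'J'}/r'_{J'}$ (hence all constants ${\kappa'}^{\pm}$), the limits $\rho'_{\omega'}$ and the functions $f'_i$, $f'$ unchanged — it only enlarges $D',W'$ and shrinks $C'$ — so \ref{M:finiteType}--\eqref{M:holder/nonlattice}, together with $\rho'<\eps$ and $R'<\eps$, all persist, while now $r'_{I'}<\eps$ for every $I'\in(\T')^*$; thus $\mathrm{T}'$ satisfies (1)--(3). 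I expect the main obstacle to be \eqref{M:holder/nonlattice} — transporting the non-lattice property to a power of the dynamics — together with the technical point of keeping the H\"older regularity in \eqref{M:rho_Jcontinuous} alive under the recoding.
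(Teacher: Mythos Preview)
Your approach matches the paper's: recode $(\T,\sigma)$ to blocks of length $m$, transport $x_I$, $r_I$, $\varphi_i$ through the concatenation map $\widehat{\cdot}$, verify the axioms with $\rho'=\rho^m$, $R'=R^m$, and take $m$ large. The verifications of \ref{jedna}--\ref{pet} and \ref{M:finiteType}--\eqref{M:rho_Jcontinuous} that you sketch are carried out in the paper in exactly the same way.

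The only substantive difference is your treatment of \eqref{M:holder/nonlattice}. You invoke the periodic-orbit characterisation of non-latticeness (via Liv\v{s}ic) together with the identity $\sum_{k=0}^{p-1}(S_mf)(\sigma^{mk}x)=S_{mp}f(x)=m\,S_pf(x)$ for a $\sigma$-periodic point $x$ of period $p$, so that a lattice for $(S_mf,\sigma^m)$ forces one for $(f,\sigma)$. The paper instead works directly with the cohomological equation: assuming $f'$ cohomologous on $(\T',\sigma')$ to some $g'$ with values in $a\zet$, it defines $g(\omega)\coloneqq\tfrac{1}{m}g'(\omega')$ and verifies by an explicit computation that $S_pf(\omega)=S_pg(\omega)$ for every $\sigma$-periodic $\omega$, concluding that $f$ is cohomologous to $g:\T\to\tfrac{a}{m}\zet$. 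Both arguments are correct; yours is shorter and more conceptual, the paper's is self-contained and elementary.

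Two minor remarks. First, the paper sidesteps your ``discrepancy'' in \eqref{M:rho_Jcontinuous} by taking ${\kappa'}^{\pm}_{i',J'}\coloneqq\kappa^{\pm}_{\widehat{i'},\widehat{J'}}$ (the iterated product from \eqref{kappaI}) rather than the optimal one-step constants for the new tree; with this choice one has $f'_i(i'J')=\sum_{n=0}^{m-1}f_i(\sigma^n\widehat{i'J'})$ exactly, and the H\"older extension is simply $\sum_{n=0}^{m-1}\tilde f_i\circ\sigma^n$ transported by $\widehat{\cdot}$. Second, your final rescaling by $\lambda\in(0,1)$ to force $r'_{I'}<\eps$ for every $I'$ is in fact slightly more careful than the paper, which relies on $r'_{I'}=r_{\widehat{I'}}\le R^{m|I'|}$ and so tacitly excludes $I'=\varnothing$. (In your rescaling $C'=C/\lambda$ is enlarged rather than shrunk, but this is harmless.)
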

\begin{proof}
For given $m\ge 1$, the set $\T_m$ of words with length $m$ in $\T^*$ can be seen as as alphabet of the address space $\Sigma_{N_m}$,
where $N_m\coloneqq \#\T_m$.
Define the transition matrix $A'\in\{0,1\}^{N_m\times N_m}$ such that
\begin{align*}
A'_{I_1\dots I_m ,J_1\dots J_m}=1 \quad\iff\quad A_{I_m, J_1}=1.
\end{align*}
In this way we get a new subshift of finite type $(\T',\sigma')$, where
\begin{align*}
\T'\coloneqq\left\{\omega\in\Sigma_{N_m}: A'_{\omega_{n},\omega_{n+1}}=1\quad\text{for all $n\in\en$} \right\}.
\end{align*}
Note, that  $(\T',\sigma')$ is topologically conjugated to  $(\T,\sigma^m)$, where $\sigma^m\coloneqq \sigma\circ\dots\circ\sigma$
is the $m$-th power of $\sigma$. In addition, the following mapping gives rise to a homeomorphism $\Phi:\T\to\T'$:
\begin{align*}
\T\ni\omega=\omega_1\omega_2\dots\omega_m\omega_{m+1}\dots\omega_{2m}\dots \mapsto (\omega_1\omega_2\dots\omega_m)(\omega_{m+1}\dots\omega_{2m})\dots=\omega'\in\T'.
\end{align*}
Using this relation, define for each $I'=(I^1_1\dots I^1_m)\dots(I^k_1\dots I^k_m)\in\T'_k$ and $\omega'\in\T'$
\begin{align*}
x'_{I'}&\coloneqq x_{I^1_1\dots I^1_m \dots I^k_1\dots I^k_m},\\
r'_{I'}&\coloneqq r_{I^1_1\dots I^1_m \dots I^k_1\dots I^k_m},\\
\varphi'_{I'}&\coloneqq \varphi_{I^1_1\dots I^1_m \dots I^k_1\dots I^k_m}\\
x'_{\omega'}&\coloneqq \lim_{n\to\infty}x'_{\omega'|_n}=\lim_{n\to\infty}x_{\omega|_{mn}}.
\end{align*}
Note that for all $I'\in\T'$ one has $r'_{I'}\le R^m$.
Clearly \ref{jedna}-\ref{pet} are satisfied with $C'\coloneqq C$, $D'\coloneqq D$ and $\rho'\coloneqq \rho^m$.
In particular \ref{triAlt} holds with $E'\coloneqq E$.
It is also immediate that
\begin{align*}
K=\{x_{\omega}:\omega\in\T, x_{\omega}\;\text{exists}\}\subseteq \{x'_{\omega'}:\omega'\in\T', x'_{\omega'}\;\text{exists}\}=K'.
\end{align*}
As $(X,d)$ is complete, $x_{\omega}$ does exist for each $\omega\in\T$, thus $K=K'$, which shows \textit{(3)}.
This also gives \ref{M:inclusionx_I}.
The property \ref{M:asymptotic} is satisfied by $R'\coloneqq R^m$.
The property \eqref{M:mappings} is trivially satisfied using $\varphi'_{I'}$.
For \eqref{M:kappa_to_one} note, that for all $I'=(I^1_1\dots I^1_m) \dots (I^l_1\dots I^l_m), J'=(J^1_1\dots J^1_m) \dots (J^k_1\dots J^k_m)\in(\T')^*$ one has by (\ref{M:kappa_to_one})
\begin{align*}
\kappa'^{\pm}_{I',J'}\coloneqq\kappa^{\pm}_{I, J}\to 1\  \quad\text{as}\quad |J'|\to\infty,
\end{align*}
where $I\coloneqq I^1_1\dots I^1_m \dots I^l_1\dots I^l_m$ and $J\coloneqq J^1_1\dots J^1_m \dots J^k_1\dots J^k_m$.
As $\omega\mapsto\rho_\omega\in(0,1)$ is continuous on a compact space, there is an $L<1$ such that $\rho_\omega\le L$ for all $\omega\in\T$.
This yields in a similar way as above 
\begin{align}\label{prod_struct}
\frac{r'_{i'\omega'|_n}}{r'_{\omega'|_n}}\to\prod_{n=0}^{m-1}\rho_{\sigma^n(i_1\dots i_m)\omega}\eqqcolon \rho'_{i'\omega'}\in(0,L^m)\subseteq (0,1)\quad\text{as}\quad n\to\infty
\end{align}
for all $i'=i_1\dots i_m\in\T'_1$ and $\omega'\in\T'$.
Note, that as $\omega\mapsto \rho_\omega$ and $\sigma:\T\to\T$ are continuous, also $\omega\mapsto \rho_{\sigma^n(\omega)}$ is continuous for each $n\ge 0$.
The continuity of $\omega'\mapsto \rho'_{\omega'}=\prod_{n=0}^{m-1}\rho_{\sigma^n(\omega)}$ follows then from the continuity of $\omega'\mapsto \omega$.

For \eqref{M:holder/nonlattice} first note, that if $f,g:\T\to\R$ are $\alpha$-H\"older continuous, then also $f+g$ and $f\circ \sigma^n$ are  $\alpha$-H\"older continuous for
each $n\ge 0$.
Next, observe that for each $\omega\in\T'$
\begin{align*}
f'(\omega')\coloneqq\log\left(\frac{1}{\rho'_{\omega'}}\right)\stackrel{\eqref{prod_struct}}{=}\sum_{n=0}^{m-1}\log\left(\frac{1}{\rho_{\sigma^n(\omega)}}\right)
=\sum_{n=0}^{m-1}f(\sigma^n(\omega))>0,
\end{align*}
where $f:\T\to\R$ is the non-lattice $\alpha$-H\"older continuous extension of $f_1$ and $f_2$.
Using this we obtain for each $k\ge 0$
\begin{align*}
\frac{\var_k(f')}{\alpha^k}\le \alpha^{m}\sum_{n=0}^{m-1}\frac{\var_{mk}(f\circ \sigma^n)}{\alpha^{mk}},
\end{align*}
thus $|f'|_\alpha\le \alpha^{m}\sum_{n=0}^{m-1}|f\circ \sigma^n|_\alpha <\infty$.
Hence $f'$ is $\alpha$-H\"older on $(\T,\sigma^m)$.
To show that $f'$ is non-lattice on $(\T,\sigma^m)$, one can proceed like in the proof of Lemma 13.1 in \cite{Lalley1989}.
For convenience of the reader, we present the following elementary proof.

We would like to prove that if $f'$ is lattice then $f$ is lattice as well.
Suppose that $f'$ is lattice.
This means that there is a discrete subgroup $G'\subseteq\er$ and $g':\T'\to G'$ such that $f'$ is cohomologous to $g'$.
Define 
\begin{equation*}
g(\omega)=\frac1m g'(\omega')
\end{equation*}
and put $G\coloneqq \frac1m G'$.
Clearly $g:\T\to G$ and $G$ is a discrete subgroup of $\er$.
We need to prove that $f$ is cohomologous to $g$, which will be a contradiction with the fact that $f$ is non-lattice.

Choose $p\in\en$ and suppose that $\omega\in\T$ such that $\sigma^p\omega=\omega$.
Note that $\sigma^p\sigma^k\omega=\sigma^k\omega$ holds for any $k\in\en$.
Then

\begin{equation}\label{eq:SpisSpSigma}
S_p f(\sigma^k\omega)=S_pf(\omega)\quad \text{for every}
\quad k\in\en
\end{equation}
which in particular implies
\begin{equation}\label{eq:SpisSmp}
S_pf(\sigma^k\omega)=\frac1m S_{pm}f(\sigma^k\omega)
\quad \text{for every}\quad k\in\en_0.
\end{equation}
We also have
\begin{equation}\label{eq:gis1overmg}
g(\sigma^k\omega)=\frac{1}{m^2}\sum_{n=0}^{m-1}g'((\sigma^{np}\sigma^k\omega)')
\quad \text{for every}\quad k\in\en_0
\end{equation}
and
\begin{equation}\label{eq:omega'periodic}
(\sigma')^p(\sigma^k\omega)'=(\sigma^{pm+k}\omega)'=(\sigma^k\omega)'
\quad \text{for every}\quad k\in\en_0
\end{equation}

Then 
\begin{equation*}
\begin{aligned}
S_pf(\omega)&\stackrel{\eqref{eq:SpisSpSigma}}{=}
\frac1m \sum_{k=0}^{m-1}S_pf(\sigma^k\omega)
\stackrel{\eqref{eq:SpisSmp}}{=}
\frac{1}{m^2}\sum_{k=0}^{m-1} S_{pm}f(\sigma^k\omega)\\
&=\frac{1}{m^2}\sum_{k=0}^{m-1}\sum_{n=0}^{pm-1}
f(\sigma^{n}\sigma^k\omega)
=\frac{1}{m^2}\sum_{k=0}^{m-1}\sum_{j=0}^{p-1}\sum_{l=0}^{m-1}
f(\sigma^{l}\sigma^{jm}\sigma^k\omega)\\
&=\frac{1}{m^2}\sum_{k=0}^{m-1}\sum_{j=0}^{p-1}
f'((\sigma^{jm}\sigma^k\omega)')
=\frac{1}{m^2}\sum_{k=0}^{m-1}\sum_{j=0}^{p-1}
f'((\sigma')^j(\sigma^k\omega)')\\
&=\frac{1}{m^2} \sum_{k=0}^{m-1} S_p f'((\sigma^k\omega)')
\stackrel{\eqref{eq:omega'periodic}}{=}
\frac{1}{m^2} \sum_{k=0}^{m-1} S_p g'((\sigma^k\omega)')\\
&=\frac{1}{m^2} \sum_{k=0}^{m-1}\sum_{j=0}^{p-1}
g'((\sigma')^j(\sigma^k\omega)')
=\frac{1}{m^2} \sum_{k=0}^{m-1}\sum_{j=0}^{p-1}
g'((\sigma^{jm+k}\omega)')\\
&=\frac{1}{m^2} \sum_{k=0}^{pm-1} g'((\sigma^k\omega)')
=\frac{1}{m^2} \sum_{k=0}^{p-1}\sum_{j=0}^{m-1}
g'((\sigma^{jp+k}\omega)')\\
&=\sum_{k=0}^{p-1}\frac{1}{m^2} \sum_{j=0}^{m-1}
g'((\sigma^{jp}(\sigma^k\omega))')
\stackrel{\eqref{eq:gis1overmg}}{=}\sum_{k=0}^{p-1}g(\sigma^k\omega)
=S_p g(\omega)
\end{aligned}
\end{equation*}
which proves that $f$ is cohomologous to $g$.

Now for $i'=i_1\dots i_m\in\T'_1$ and $J'\in(\T')^*$, by definition one has
\begin{align*}
f'_i(i'J')&=\log\left(\frac{r'_{J'}}{r'_{i'J'}\kappa'^{\pm}_{i',J'}}\right)=
\sum_{n=1}^m\log\left(\frac{r_{\sigma^n(i_1\dots i_m)J}}{r_{\sigma^{n-1}(i_1\dots i_m)J}}\kappa^{\pm}_{i_n,\sigma^n(i_1\dots i_m)J}\right)\\
&=\sum_{n=0}^{m-1} f_i(\sigma^n(i_1\dots i_m)J).
\end{align*}
This yields in the same way as above, that there are $\alpha$-H\"older continuous extensions $\tilde{f}'_i$ to $(\T'\cup(\T')^*,\sigma')$ with $\tilde{f}'_i(\omega')=f'(\omega')$ for all $\omega'\in \T'$. Thus \textit{(1)} holds.

Finally, as $m\ge 0$ can be chosen arbitrarily large, \textit{(2)} follows.
\end{proof}

{\color{black}
As a first application of the above lemma, we show that we may always assume  
\begin{equation}\label{eq:K_GxiInclusion}
\varphi_{IJ}(K_{G\xi})\subseteq \varphi_{I}(K_{G\xi})
\end{equation}
for any $I,J\in\T^*$.
To prove this we just need 
\begin{equation}
\varphi_{J}(K_{G\xi})\subseteq K_{G\xi}
\end{equation}
for any $J\in\T^*$.
But since $\varphi_{J}$ is $\kappa^+_{J,\varnothing} r_J$-Lipschitz on $K_{G\xi}$, we in fact just need $\kappa^+_{J,\varnothing}  r_J\leq 1$ for any $J\in\T^*$, 
which is always possible due to \eqref{M:kappa_to_one} and Lemma~\ref{relabel_trick}.}

Now, \eqref{M:kappa_to_one} in particular implies that there is a constant 
{\color{black}$\frac{1}{W}<\kappa<\infty$ (here $W>0$ is from condition \eqref{M:mappings})} such that 
\begin{equation}\label{M:kappa}
\frac{1}{\kappa}\leq \kappa^{\pm}_{I,J}\leq \kappa
\end{equation}
for every $I,J\in\mathcal{T}^*$.
This yields
\begin{align}\label{rough_4.5_estimate}
\frac{r_{IJ}}{\kappa r_J} d(x,y)\stackrel{\eqref{M:kappa}}{\leq} \kappa^{-}_{I,J}\frac{r_{IJ}}{r_J} d(x,y)
\stackrel{\eqref{biLip2}}{\leq} d(\varphi_{I}(x),\varphi_{I}(y))
\end{align}
for  $x,y\in \varphi_{J}(K_{G\xi})$.
In particular, applying \eqref{rough_4.5_estimate} to $I\coloneqq J$ and $J\coloneqq \varnothing$, one has
\[
\frac{r_{J}}{\kappa  } d(x,y)\leq d(\varphi_{J}(x),\varphi_{J}(y)) 
\]
for  $x,y\in K_{G\xi}$.
Thus {\color{black}
\begin{align}\label{neighborhood_inclusion}
(K_J)_{\frac{G\xi r_J}{\kappa  }}\stackrel{ \eqref{M:mappings}} {\subseteq} \varphi_J(K_{G\xi}).
\end{align}}
That means that $ \varphi_I $ is $ \kappa^{+}_{I,J}\frac{r_{IJ}}{r_J} $-Lipschitz on $ (K_J)_{\frac{G\xi r_J}{\kappa  }} $.
Therefore, if $0<\eps \le \frac{\xi r_J}{\kappa  }$, we can apply \ref{C:lipschitzImage} and obtain
\begin{align}\label{upper_estimate_part_counting_function}
N\left(\kappa^{+}_{I,J}\frac{r_{IJ}}{r_J}\eps,K_{IJ}\right)
\leq N(\eps,K_J).
\end{align}
Next, given some $0<\eps ,L< \infty$, define $L\tilde{\eps }\coloneqq \eps $.
Assume  $\varphi_I^{-1}$ to be $L$-Lipschitz on $(K_{IJ})_{G\tau}$, and $\tilde{\eps }\le \tau$.
We have then
\begin{align}\label{upper_estimate_for_N}
N(\eps ,K_J)\stackrel{\eqref{M:mappings}}=N(L\tilde{\eps },\varphi_I^{-1}(K_{IJ}))\stackrel{\ref{C:lipschitzImage}}{\le} N(\tilde{\eps }, K_{IJ}).
\end{align}
By \eqref{neighborhood_inclusion} it follows that
\begin{align*}
(K_{IJ})_{\frac{G\xi r_{IJ}}{\kappa  }}\subseteq\varphi_{IJ}((K)_{G\xi})= \varphi_I(\varphi_J(K_{G\xi})).
\end{align*}
In addition, $\varphi_I^{-1}$ is $\left(  \kappa^{-}_{I,J}\frac{r_{IJ}}{r_J}\right)^{-1}$-Lipschitz on $\varphi_I(\varphi_J(K_{G\xi}))$.
Thus \eqref{upper_estimate_for_N} holds for $L=\left(  \kappa^{-}_{I,J}\frac{r_{IJ}}{r_J}\right)^{-1}$, if
$
\eps  L^{-1}=\tilde{\eps }\leq\frac{\xi r_{IJ}}{\kappa  }.
$
This is always satisfied for $\eps \leq\frac{\xi r_J}{\kappa^2  }$.
Combining \eqref{upper_estimate_part_counting_function} and \eqref{upper_estimate_for_N}, that means
\begin{equation}\label{packingMI}
N\left(\kappa^{+}_{I,J}\frac{r_{IJ}}{r_J}\eps,K_{IJ}\right)
\leq N(\eps,K_J)
\leq N\left(\kappa^{-}_{I,J}\frac{r_{IJ}}{r_J}\eps,K_{IJ}\right)
\end{equation}
for all $I,J\in\mathcal{T}^*$,
whenever
\begin{align}\label{eps_tilde}
\eps\leq \min\left(\frac{\xi\rho^{|J|}}{\kappa },\frac{\xi\rho^{|J|}}{\kappa^2 }\right)=\frac{\xi\rho^{|J|}}{\kappa^2 }
\eqqcolon \tilde\eps_{|J|}.
\end{align}

Note, that from \eqref{biLip2} also
\begin{equation}\label{diameterMI}
\kappa^{-}_{I,J}\frac{r_{IJ}}{r_J}\diam(K_{J})
\leq\diam(K_{IJ})
\leq\kappa^{+}_{I,J}\frac{r_{IJ}}{r_J}\diam(K_{J})
\end{equation}
for all $I,J\in\mathcal{T}^*$ follows.

Also, for all $ IJ\in \T^*$, equations \eqref{packingMI} and \eqref{diameterMI} can be
rewritten as
\begin{equation}\label{packingMIrev}
N\left(e^{-S_{|I|}f_{1}(IJ)}\eps,K_{IJ}\right)
\leq N(\eps,K_J)
\leq N\left(e^{-S_{|I|}f_{2}(IJ)}\eps,K_{IJ}\right)
\end{equation}
whenever $\eps\leq\tilde\eps_{|J|}$,
and
\begin{equation}\label{diameterMIrev}
e^{-S_{|I|}f_{2}(IJ)}\diam(K_{J})
\leq\diam(K_{IJ})
\leq e^{-S_{|I|}f_{1}(IJ)}\diam(K_{J}).
\end{equation}
Note, that \eqref{packingMIrev} and \eqref{diameterMIrev} are trivially satisfied, if $ I=\varnothing $.

{\color{black}
Using
\eqref{422342552343236}, \eqref{eq:K_GxiInclusion} and the optimality of the
constants $\kappa_{i,J}^{\pm}$, we immediately obtain
\begin{equation}\label{monotonefi}
f_2(I)\ge f_2(IJ)\ge f_1(IJ)\geq f_1(I)
\end{equation}
for every $IJ\in\T^*$, $I\neq \varnothing$.}

Now, condition \eqref{monotonefi} in particular implies that there is some $m_0\in\en$ such that
\begin{equation}\label{strictf}
f_1(iJ)>0 \quad\text{whenever $|J|\geq m_0$}.
\end{equation}
Indeed, assume there exists for each $n\ge 1$ an $I^n\in\bigcup_{j\ge n}\T_j$ such that
$f_1(iI^n)\le 0$.
Next choose some $I^n\prec \omega^n\in\T$.
As $\T$ is compact, there exists an $\omega\in\T$ and an increasing subsequence $(n_k)_{k\ge 1}$ such that 
$\omega^{n_k}\to \omega$ as $k\to\infty$.
Thus for each $l\in\N$ there is an $N_l\ge 1$ such that $\omega|_{l}=\omega^{n_k}|_{l}$
and $n_k\ge l$ for all $k\ge N_l$.
Using $n_k\le |I^{n_k}|$, this implies for all $l\ge 1$ and $k\ge N_l$
\begin{align*}
\omega|_{l}\prec I^{n_k},\quad \textrm{hence}\quad \tilde f_1(i\omega|_l)=f_1(i\omega|_l)\stackrel{\eqref{monotonefi}}{\le}  f_1(i I^{n_k})\le 0.
\end{align*}
As $\tilde f_1$ is continuous on $\T\cup \T^*$, it follows that
\begin{align*}
\log\left(\frac{1}{\rho_{i\omega}}\right)=\tilde f_1(i\omega)=\lim_{l\to\infty}\tilde f_1(i\omega|_l)\le 0,
\end{align*}
which is a contradiction to \eqref{M:rho_Jcontinuous}.

As a next application,
we show that we can always assume an existence of a function $\phi:\T^*\to\en_0$ such that
\begin{equation}\label{copy_in_ball}
  I\phi(I)\in\T^* \quad \text{and}\quad K_{I\phi(I)}\subseteq B\left(x_I,\frac{Cr_I}{2}\right)
\end{equation}
for every $I\in\T^*$.
Indeed, applying (\ref{inclusion_x_IJ}) and \ref{jedna}, we have
\begin{align*}
  B\left(x_I,\frac{C}{2}r_I\right)\cap K =  B\left(x_I,\frac{C}{2}r_I\right)\cap K_I.
\end{align*}
Now pick some $m\in\N$ such that $R^m  D\le\frac{C}{2}$, and some
$J=J(I)\in \mathcal{T}_m$ such that $x_I\in K_{IJ}$.  Using
\ref{M:asymptotic} and \eqref{diamKI} this implies that
$\mathrm{diam}(K_{IJ})\le\frac{C}{2}r_I$ and thus
\begin{equation*}
  K_{IJ}\subseteq B\left(x_I,\frac{Cr_I}{2}\right).
\end{equation*}
According to Lemma \ref{relabel_trick}, without loss of
generality we may assume that $m=1$, which shows (\ref{copy_in_ball}).

We now state the main results of this section:
\begin{prop}\label{bowenformula}
  Let $\mathrm{T}$ be an $s$-tree satisfying conditions
  \ref{M:finiteType}-\eqref{M:holder/nonlattice}. Then $s$ is the unique zero of the pressure function $t\mapsto p(-tf)$ (for the definition and properties of $p$ see \cite[Section~2B]{Bowen1975}),
where $f$ is defined as in \eqref{M:holder/nonlattice}.
\end{prop}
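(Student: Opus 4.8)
The plan is to compute the pressure $p(-tf)$ through partition sums, match the resulting exponential rate with $\sum_{I\in\T_n}r_I^s$ (which by \ref{triAlt} stays bounded away from $0$ and $\infty$), and deduce uniqueness of the zero from strict monotonicity of $t\mapsto p(-tf)$. First I would dispose of uniqueness. Since $\rho_\omega\in(0,1)$ for every $\omega\in\T$ and $\omega\mapsto\rho_\omega$ is continuous on the compact set $\T$, the function $f=\log(1/\rho_\cdot)$ satisfies $m:=\min_\T f>0$; hence for $t<t'$ one has $-t'f\le-tf-(t'-t)m$ pointwise, and the monotonicity of the pressure together with the identity $p(\phi+c)=p(\phi)+c$ for constants $c$ (see \cite[Section~2B]{Bowen1975}) gives $p(-t'f)\le p(-tf)-(t'-t)m<p(-tf)$. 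So $t\mapsto p(-tf)$ is strictly decreasing, and it remains only to show $p(-sf)=0$.

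For the latter I would use that, $f$ being $\alpha$-H\"older (hence of summable variation), the pressure is recovered from partition sums:
\begin{equation*}
p(-sf)=\lim_{n\to\infty}\frac1n\log\sum_{I\in\T_n}\exp\!\big(-s\,S_nf(\omega^*_I)\big),
\end{equation*}
where $\omega^*_I$ is an arbitrary point of $[I]$, the choice being irrelevant to the limit because summable variation forces $\sup_{[I]}e^{-sS_nf}$ and $\inf_{[I]}e^{-sS_nf}$ to differ by an $n$-independent factor (standard properties of $p$; see again \cite[Section~2B]{Bowen1975}). The crux is the uniform comparison $e^{-S_nf(\omega)}\asymp r_I$, valid for all $n\ge1$, $I\in\T_n$ and $\omega\in[I]$. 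To prove it, let $\tilde f_1$ be the H\"older extension of $f_1$ to $\T\cup\T^*$ furnished by \eqref{M:rho_Jcontinuous}, so that $\tilde f_1|_\T=f$ and $\tilde f_1|_{\T^*}=f_1$. Writing $M_j:=I_jI_{j+1}\cdots I_n$ and $M_{n+1}:=\varnothing$, the definition of $f_1$ telescopes and, using \eqref{kappaI} and $r_\varnothing=1$, one obtains
\begin{equation*}
\sum_{k=0}^{n-1}\tilde f_1(\omega_{\sigma^kI})=\sum_{k=0}^{n-1}f_1(M_{k+1})=\log\frac{1}{r_I\,\kappa^+_{I,\varnothing}}.
\end{equation*}
On the other hand, for $\omega\in[I]$ the sequences $\sigma^k\omega$ and $\omega_{\sigma^kI}$ agree in their first $n-k$ coordinates, so
\begin{equation*}
\Big|\,S_nf(\omega)-\sum_{k=0}^{n-1}\tilde f_1(\omega_{\sigma^kI})\,\Big|\le\sum_{k=0}^{n-1}\var_{n-k}(\tilde f_1)\le\sum_{j\ge1}\var_j(\tilde f_1)<\infty,
\end{equation*}
and combining the last two displays with \eqref{M:kappa} (which bounds $\kappa^+_{I,\varnothing}$ away from $0$ and $\infty$) yields $|S_nf(\omega)+\log r_I|\le V$ for some $V$ independent of $n$, $I$ and $\omega$, which is the desired comparison.

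Finally I would combine everything: by the comparison, $\sum_{I\in\T_n}e^{-sS_nf(\omega^*_I)}$ and $\sum_{I\in\T_n}r_I^s$ agree up to an $n$-independent factor, while \ref{triAlt} applied with $I=\varnothing$ and $\I=\T_n\in\T^n_\varnothing$ (using $r_\varnothing=1$) gives $E^{-1}\le\sum_{I\in\T_n}r_I^s\le E$ for every $n$. Hence $\frac1n\log\sum_{I\in\T_n}e^{-sS_nf(\omega^*_I)}\to0$, so $p(-sf)=0$, and with the strict monotonicity above this shows $s$ is the unique zero of $t\mapsto p(-tf)$. I expect the only delicate point to be the uniformity in the comparison step: both the tail $\sum_{j\ge1}\var_j(\tilde f_1)$ produced by replacing $f$ with the finite-word potential $f_1$, and the distortion factor $\kappa^+_{I,\varnothing}$, must be bounded independently of $n$ and of the word $I$ — the former is H\"older continuity of $\tilde f_1$, the latter is precisely \eqref{M:kappa}.
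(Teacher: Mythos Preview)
Your proof is correct but takes a genuinely different route from the paper's. The paper assumes the zero $\delta$ of $t\mapsto p(-tf)$ exists (standard), constructs the Gibbs measure $\nu$ for $-\delta f$, pushes it forward to $\mu=\nu\circ\pi^{-1}$ on $K$, and then verifies that $\mu$ makes $K$ a $\delta$-regular set by repeating the ball-counting argument of Lemma~\ref{L:sTreeImpliesRegular}; since $K$ is already $s$-regular, uniqueness of the regularity exponent forces $\delta=s$. Your approach bypasses the Gibbs measure entirely: you compute $p(-sf)=0$ directly from the partition-sum formula, using the same bounded-distortion comparison $e^{-S_nf(\omega)}\asymp r_I$ that the paper also proves (their \eqref{holderestimatestuff}, with $f_2$ in place of your $f_1$), and then invoke \ref{triAlt} to see that $\sum_{I\in\T_n}r_I^s$ stays bounded. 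Your argument is shorter and needs only the variational principle for pressure, not the existence of equilibrium states. The paper's detour, however, buys something extra: it actually exhibits an $s$-regular measure on $K$, and — as the paper remarks just after the proof — it nowhere uses \ref{tri} or \ref{triAlt}, so it applies to trees satisfying only \ref{jedna}, \ref{dva}, \ref{ctyri}, \ref{pet} together with the hypotheses \ref{M:finiteType}--\eqref{M:holder/nonlattice}, whereas your final step leans on \ref{triAlt} to control $\sum_{I\in\T_n}r_I^s$.
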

\begin{thm}\label{MainTheoren}
  Let $\mathrm{T}$ be an $s$-tree satisfying conditions
  \ref{M:finiteType}-\eqref{M:holder/nonlattice}. Then there exists
  $0<\theta<\infty$ such that
  \begin{equation*}
    \lim_{\eps\to 0+} \eps^s N(\eps,K)=\theta.
  \end{equation*}
\end{thm}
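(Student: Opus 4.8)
The plan is to reduce the statement to an application of Lalley's renewal theorem (Proposition~\ref{renewal_theorem}) by encoding the function $\eps\mapsto\eps^s N(\eps,K)$ as the object $N_G(a,\omega)$ appearing there. First I would fix a base word, say $\omega=\varnothing$, and introduce the logarithmic variable $a=-\log\eps$, so that $\eps^s N(\eps,K)=e^{-sa}N(e^{-a},K)$. Using Lemma~\ref{relabel_trick} I may first pass to a relabelled $s$-tree in which $\rho$, all $r_I$, all $\rho_\omega$ and $R$ are as small as we like; this makes the error terms below summable and lets us assume $m=1$ in \eqref{copy_in_ball}. The key geometric input is \eqref{packingMIrev}: for $IJ\in\T^*$ with $\eps\le\tilde\eps_{|J|}$ one controls $N(\eps,K_J)$ by $N(e^{-S_{|I|}f_i(IJ)}\eps,K_{IJ})$ from both sides, and by \eqref{M:holder/nonlattice} both exponents $S_{|I|}f_1(IJ)$ and $S_{|I|}f_2(IJ)$ are close to $S_{|I|}f(IJ)$ — their difference tends to $0$ as $|J|\to\infty$ uniformly in $I$ by \eqref{M:kappa_to_one}. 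So up to a multiplicative error tending to $1$, the ``depth-$a$ pieces'' $K_{IJ}$ all look like rescaled copies of fixed small sets, and this is exactly the self-similar structure that renewal theory exploits.

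The main body of the argument would be a stopping-time / last-exit decomposition. Given $\eps=e^{-a}$, for each $\omega\in\T$ let $I(\omega,a)$ be the initial segment determined by the condition that $K_{I(\omega,a)}$ has diameter of order $\eps$ but its parent does not (this is precisely the window \eqref{sandwich_estimate}–\eqref{smallest} used in Lemma~\ref{L:sTreeImpliesRegular}, now phrased via $S_nf$ using \eqref{diameterMIrev}); these $I$'s partition $\T$ into finitely many cylinders $[I_1],\dots,[I_k]$. The subadditivity \ref{C:subadititivity} and the separation property \ref{C:separation} together with the overlap estimate (the promised Lemma~\ref{L:estimateOverlaps}, using $\{x_I\}\subseteq K$ and $B(x_I,Cr_I/2)$ from \eqref{copy_in_ball}) let me write $N(\eps,K)$ as $\sum_j N(\eps,K_{I_j})$ plus a controlled error coming from the overlaps; because the overlaps contribute something of size comparable to the boundary pieces, they can be absorbed using Lemma~\ref{prunnedTreeLemma}, whose geometric series $(1-\rho^s)^m$ is exactly what makes the overlap contribution negligible in the limit. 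Then \eqref{packingMIrev} rewrites each $N(\eps,K_{I_j})$ as $N(e^{S_{|I_j|}f(\cdot)}\eps, K_{\sigma^{|I_j|}})$ — i.e.\ a counting function at a coarser scale on a ``root'' set — which is the renewal recursion: writing $M(a)\coloneqq N(e^{-a},K)$ one gets $M(a)=\sum_j M(a-S_{|I_j|}f(\dots))+(\text{error})$, the defining recursion for $N_G$ with $f_*=f$, $g_*$ a constant, and $G$ a bounded monotone function encoding the ``initial data'' $N(\eps',K_{\text{small}})$ for $\eps'$ in a bounded range.

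Having matched the recursion, Proposition~\ref{renewal_theorem} gives $N_G(a,\varnothing)\sim e^{a\delta}\int_0^\infty G(t)\,F(\varnothing,dt)$ with $\delta$ the zero of $t\mapsto p(-tf)$, and by Proposition~\ref{bowenformula} that zero is $\delta=s$. Hence $e^{-sa}M(a)$ converges to $\theta\coloneqq\int_0^\infty G(t)\,F(\varnothing,dt)$, which is finite and positive (finiteness because $G$ is bounded and $F(\varnothing,\cdot)$ is a finite measure; positivity because $C_*>0$ and $G$ is not identically zero, which follows since $N$ is bounded below by \ref{C:comparability} and Lemma~\ref{L:differentEpsilons}). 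Translating back, $\lim_{\eps\to0+}\eps^sN(\eps,K)=\theta\in(0,\infty)$.

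\textbf{Main obstacle.} The delicate point is not the renewal theorem itself but showing that the error terms in the decomposition $N(\eps,K)=\sum_j N(\eps,K_{I_j})+(\text{error})$ genuinely vanish after multiplication by $\eps^s$. Three sources of error must be controlled simultaneously and shown to be $o(\eps^{-s})$: (i) the overlap between incomparable pieces $K_I$, $K_J$, handled by the separation constant $A$ in \ref{C:separation} together with \eqref{copy_in_ball} and Lemma~\ref{prunnedTreeLemma}; (ii) the mismatch between $f_1$, $f_2$ and their common limit $f$, which forces one to run \emph{two} renewal estimates (an upper one with $f_2$ and a lower one with $f_1$) and then use \eqref{M:kappa_to_one} to squeeze them together; and (iii) the boundary effect that the stopping time $I(\omega,a)$ is only defined up to the window \eqref{smallest}, so the ``initial data'' function $G$ is only determined up to the ambiguity in that window — one must check this ambiguity does not affect the limit, which is where the Hölder continuity of $f$ and the monotonicity \ref{C:monotonicity} of $N$ enter. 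I expect step (i), the overlap control, to be the technically hardest, since it is exactly the feature (\emph{overlaps need not be empty}, unlike in Lalley's Schottky-group setting) that the authors flag as the essential new difficulty.
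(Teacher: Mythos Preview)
Your proposal follows essentially the same architecture as the paper: a stopping-time decomposition of $K$ into pieces $K_{IJ}$ at scale $\eps$, the comparison \eqref{packingMIrev} to reduce $N(\eps,K_{IJ})$ to $N(\cdot,K_J)$, overlap control via Lemma~\ref{L:estimateOverlaps} and Lemma~\ref{prunnedTreeLemma}, and an application of Lalley's renewal theorem with $\delta=s$ by Proposition~\ref{bowenformula}. The three error sources you list are exactly the ones the paper isolates.

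One structural point deserves correction. The recursion you write, $M(a)=\sum_j M(a-S_{|I_j|}f(\dots))+(\text{error})$, does not actually close: after applying \eqref{packingMIrev} one lands on $N(\cdot, K_J)$ with $J$ the \emph{tail} of the stopped word, not on $N(\cdot,K)$ again. Consequently one cannot invoke the renewal theorem a single time for a single function $G$ and read off $\theta=\int_0^\infty G(t)\,F(\varnothing,dt)$ as you do. The paper resolves this by introducing an auxiliary depth parameter $m$: one fixes $|J|=m$, treats each $G_J(t)=N(e^{t-\gamma_m},K_J)$ as a separate initial-data function, and applies Proposition~\ref{renewal_theorem} once for each $iJ$ to obtain limits $U_m$ and $L_m$ (Lemma~\ref{L:estimateMainTerms}). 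Only \emph{then} is $m\to\infty$ sent. All three errors (i)--(iii) carry an $m$-dependence and vanish only in this second limit, so the actual conclusion is the double squeeze $L_m-R_m\le\liminf\le\limsup\le U_m+Q_m$ with $|U_m-L_m|,Q_m,R_m\to0$ (Lemma~\ref{L:estimateNegligible}); the value $\theta$ is then any subsequential limit of $L_m$, and positivity/finiteness comes from \eqref{sRegularInequalinies} rather than from an explicit integral formula. A minor related point: the paper does not run ``an upper renewal with $f_2$ and a lower with $f_1$'' as you suggest, but uses $f_1$ on both sides, absorbing the $f_1/f_2$ discrepancy into the shift $\alpha_m\to0$ inside $G_J$. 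Once you separate the two limits and make this adjustment, your outline becomes the paper's proof.
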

\begin{rem}
Before we proceed with the proofs of both statements,
 we will first explain what the conditions \ref{M:finiteType}-\eqref{M:holder/nonlattice} mean in the classical case of $K$ being a self-similar set.
This will not only explain their meaning, but will be also useful in Section~\ref{S:examples}.

Let $K\subseteq\er^d$ be a self-similar set generated by similarities
$\varphi_1,\dots,\varphi_N$ with corresponding contraction ratios
$r_1,\dots,r_N$ and similarity dimension $s$.  Suppose that the OSC
holds and that the set in non-lattice, namely there exist $i,j$ such
that $\frac{\log(r_i)}{\log(r_j)}\not\in\qe$.

Let $\mathrm{T}=(\Sigma_N,\{x_I\},\{r_I\})$ be some $s$-tree constructed in the
sense on Remark~\ref*{R:SSStoStree} with the additional property that
$\{x_I\}_{I\in\T^*} \subseteq K$.  In this case conditions
\ref{M:finiteType}-\ref{M:asymptotic} are clearly satisfied with
$R\coloneqq \max_i r_i$.  For conditions
\eqref{M:mappings}-\eqref{M:rho_Jcontinuous} it is sufficient to consider
\begin{equation*}
  \varphi_I\coloneqq\varphi_{I_1}\circ\dots\circ\varphi_{I_{|I|}},\quad\text{and}\quad\kappa_{i,J}^{\pm}\coloneqq\kappa\coloneqq 1.
\end{equation*}
The mapping $f$ is then defined by $f(\omega)\coloneqq-\log(r_{\omega_1})$ and we
only need to check condition \eqref{M:holder/nonlattice}.  The
H\"olderness of the mapping $f$ is immediate, since $f(\omega)$
depends only on the first coordinate of $\omega$ and it remains to
verify the non-lattice condition.

{\color{black}  To do
	this pick $i,j$ such that $\frac{\log(r_j)}{\log(r_i)}=r\not\in\qe$. and suppose that there is a function $g:\T\to\er$ such that
	\begin{equation}\label{eq:comohologous}
	f-g=u-u\circ\sigma
	\end{equation}
	for some function $u$ on $\T$ and such that $g$ takes values in a proper closed (additive) subgroup of $\er$ (i.e. there is $a>0$ such that $g(\omega)\in a\zet$ for every $\omega\in\T$).

Consider $\omega,\tau\in\T$ of the form $\omega\coloneqq ii\dots$, $\tau\coloneqq jj\dots$ and observe that \eqref{eq:comohologous} implies $
g(\omega)=-\log(r_i)$ and $g(\tau)=-\log(r_j)$.
Hence $\log(r_i),\log(r_j)\in a \zet$, which is a contradiction with the choice on $i$ and $j$.}
\end{rem}

\subsection{Proofs of the main results}
\begin{proof}[Proof of Proposition \ref{bowenformula}]
  Let $0<\delta<\infty$ be the unique zero of $t\mapsto p(-tf)$.
  As $-\delta f$ is $ \alpha $-H\"older continuous on $\mathcal{T}$, there exists a $\sigma$-invariant Gibbs measure $\nu$ with respect to $-\delta f$ on $\mathcal{T}$.
This means there is a constant $0<c<\infty$ such that for each admissible word $I\in\mathcal{T}^*$ and all $\omega\in [I]$
one has
\begin{align}\label{gibbsproperty}
\frac{1}{c}\le\frac{\nu([I])}{\exp\big(-\delta S_{|I|}f(\omega)\big)}\le c.
\end{align}
Our aim is to show that $K$ is $\delta$-regular with respect to $\nu\circ \pi^{-1}$, which implies $\delta=s$.

As $f=\tilde{f}_2$ on $\mathcal{T}$, we can pick by $ \alpha $-H\"older continuity of $\tilde{f}_2$ some constant $0\le S<\infty$ satisfying 
\begin{align}\label{holderestimatestuff}
\left| S_{|I|}f(\omega) -  S_{|I|}f_2(\omega|_{|I|})\right|\le|I|\cdot\mathrm{var}_{|I|}(\tilde{f}_2)\le S
\end{align}
for all $I\in\mathcal{T}^*$ and $\omega\in[I]$.
Using \eqref{gibbsproperty} together with \eqref{holderestimatestuff}, \eqref{diameterMIrev} and \eqref{diamKI}, one obtains
\begin{align*}
\nu([I])\le \frac{c  \exp(\delta S)D^\delta}{(\diam K)^\delta} r_I^\delta.
\end{align*}
Set $\mu\coloneqq \nu\circ\pi^{-1}$.
Repeating the proof of the upper bound of Lemma \ref{L:sTreeImpliesRegular},
one derives for all $x\in K$ and $1>r>0$
\[
\mu (B(x,r))\le \beta r^\delta,
\]
where $0<\beta<\infty$ is some constant.

For the lower estimate, recall that as $(\mathcal{T}, \sigma)$ is a subshift of finite type with transition matrix  $A$, there exists a $l_0\in\N$ such that $A^l$ is strictly positive for all $l\ge l_0$.
Thus, for fixed $I\in\mathcal{T}^*$, there are $J=J_1\dots J_{l_0}, J'=J_1'\dots J_{l_0}'\in\mathcal{T}_{l_0}$ such that $J_{l_0}\neq J_{l_0}'$ and $IJ, IJ'\in\mathcal{T}^*$.
This means by \ref{dva} and \ref{pet}
\[
d(x_{IJ},x_{IJ'})\stackrel{\ref{jedna}}{\ge} C (r_{IJ_1\dots J_{l_0}}+r_{IJ_1'\dots J_{l_0}'})\stackrel{\ref{pet}}{\ge} 2C\rho^{l_0} r_{I}.
\]
As $K_{I}=\bigcup_{|J|=l_0}K_{IJ}$, above estimate yields $\diam K_I\ge 2C\rho^{l_0} r_{I}$.
With similar arguments like before, we obtain analogously to \eqref{eq:lower} the estimate
\[
\mu(K_I)\geq\nu([I])\ge\frac{\exp(-\delta S)}{c}\Big(\frac{2C\rho^{l_0}}{\diam K}\Big)^{\delta} r_I^{\delta}\eqqcolon \alpha'r_I^{\delta}.
\]
Now repeating the proof of the lower bound in Lemma \ref{L:sTreeImpliesRegular},
one derives
\[
\frac{\mu (B(x,r))}{r^{\delta}}\ge  \frac{\alpha'\rho^{\delta}}{D^{\delta}}\eqqcolon \alpha
\]
for all $x\in K$ and $\diam K>r>0$.
\end{proof}

\begin{rem}
In the proof of Proposition \ref{bowenformula}, we did not use \ref{tri} or \ref{triAlt}.
Thus, a tree which satisfies \ref{jedna}, \ref{dva}, \ref{ctyri}, \ref{pet} and
\ref{M:finiteType}-\eqref{M:holder/nonlattice}, generates a $\delta$-regular set,
where $\delta$ is the root of the pressure function.
This also holds, if one drops the non-lattice condition in \eqref{M:holder/nonlattice},
as it is not needed for the existence of the Gibbs measure.
Note however, that the underlying space $ (X,d) $ needs to be complete and doubling.
\end{rem}
To prove Theorem \ref{MainTheoren}, we need to do some preparations.  Fix
$\eps>0$. Pick for $m\ge 1$ some positive constants $\gamma_m$ (their
actual value will be determined later - see
\eqref{how_to_choose_gamma_m}).

Recall $m_0$ to be the constant determined by \eqref{strictf}.
Next consider $I\in\mathcal{T}^*$
and $J\in\bigcup_{k>m_0}^\infty\mathcal{T}_k$. Then $|IJ|>m_0$, and by
\eqref{strictf} we have
\begin{equation}\label{24590760w6459}
  S_{k}f_{1}(IJ)> 0
\end{equation}
for every $k=0,\dots, |I|$. Furthermore consider the conditions
\begin{equation}\label{lowergammam}
  S_{|I|}f_{1}(IJ)>-\log(\eps)-\gamma_{ |J| },
\end{equation}
\begin{equation}\label{uppergammam}
  S_{k}f_{1}(IJ|_{ |J| +k})\leq-\log(\eps)-\gamma_{ |J| },\; k=0,\dots, |I|-1,
\end{equation}
\begin{equation}\label{uppergammamalt}
  S_{|I|-1}f_{1}(IJ)\leq-\log(\eps)-\gamma_{ |J| }.
\end{equation}
Note that in the case $I=\varnothing$ condition \eqref{uppergammam} is always satisfied. Also, in the case $|I|\leq1$, we have that $ S_{|I|-1}f_{1}(IJ)=0$.
We define for $m\ge 1$
\begin{align*}
  \alpha_{m}\coloneqq \sup_{IJ: |J| =m}\big(S_{|I|}f_{2}(IJ)-S_{|I|}f_{1}(IJ)\big).
\end{align*}
Note that \eqref{M:kappa} immediately implies
\begin{align*}
  \log\kappa^{-2}\le\alpha_m\le \log\kappa^2  
\end{align*}
for all $m\ge 1$ and since
\begin{align*}
  S_{|I|}f_{2}(IJ)-S_{|I|}f_{1}(IJ)\stackrel{\eqref{422342552343236}}{=}\log\left(\frac{\kappa^+_{I,J}}{\kappa^-_{I,J}}\right)\to 0 \quad\text{uniformly in $I$}
\end{align*} 
by \eqref{M:kappa_to_one}, we obtain even
\begin{equation}\label{alpha_m_to_one}
\alpha_m\to 0 \quad\text{as}\quad m\to\infty.
\end{equation}

Observe also that \eqref{uppergammamalt} implies \eqref{uppergammam}.
For $ |I|=1 $ this is clear. 
If $|I|>1 $, then
\begin{equation*}
  \begin{aligned}
    S_{|I|-1}f_{1}(IJ)&\stackrel{{ \eqref{24590760w6459}}}{\geq} S_{k}f_{1}(IJ)=\sum_{n=0}^{k-1} f_{1}(\sigma^n (IJ))\\
    &\stackrel{\eqref{monotonefi}{\color{white}
        a}}{\geq}\sum_{n=0}^{k-1} f_{1}(\sigma^n
    (IJ|_{|J|+k}))=S_{k}f_{1}(IJ|_{|J|+k})
  \end{aligned}
\end{equation*}
for $k=0,\dots, |I|-1$.
Next, if we define
\begin{equation*}
  \De_{\eps}^m\coloneqq  \left\{IJ: |J|=m,\; \text{ \eqref{lowergammam} and \eqref{uppergammam} hold for }IJ \right\},
\end{equation*}
\begin{equation*}
  \De_{\eps}^{'m}\coloneqq \left\{IJ: |J|=m,\; \text{ \eqref{lowergammam} and \eqref{uppergammamalt} hold for }IJ \right\},
\end{equation*}
\begin{equation*}
  \De_{\eps}(J)\coloneqq \left\{I: \; \text{ \eqref{lowergammam} and \eqref{uppergammam} hold for }IJ\right\},
\end{equation*}
\begin{equation*}
  \De'_{\eps}(J)\coloneqq \left\{I: \text{ \eqref{lowergammam} and \eqref{uppergammamalt} hold for }IJ\right\},
\end{equation*}
and
\begin{equation*}
  \De''_{\eps}(J)\coloneqq \De_{\eps}(J)\smallsetminus \De'_{\eps}(J),
\end{equation*}
we have
\begin{equation*}
\De''_{\eps}(J)\cup \De'_{\eps}(J)=\De_{\eps}(J).
\end{equation*}

Note that for all $n\ge m> m_0$ and
$\omega\in\mathcal{T}$
\begin{align*}
S_{n-m}f_1(\omega|_n)=\sum_{k=0}^{n-m-1}f_1(\sigma^k(\omega|_n))\ge(n-m)\inf_{I\in\mathcal{K}_0} \tilde f_1(I),
\end{align*}
where $\mathcal{K}_0\coloneqq\{I\in\T\cup\T^*:|I|>m_0\}$.
One has by Lemma \ref{compactnesslemma} that $\mathcal{K}_0$ is compact.
Using \eqref{strictf} and continuity of $\tilde f_1$, this yields
\begin{align*}
\inf_{I\in\mathcal{K}_0}\tilde  f_1(I)=\min_{I\in\mathcal{K}_0}\tilde f_1(I)>0.
\end{align*}
Thus $S_{n-m}f_1(\omega|_n)\to\infty$ as $n\to \infty$. 
Since by \eqref{strictf} and \eqref{monotonefi}
\begin{equation*}
S_{n-m}f_1(\omega|_n)>S_{n-m-1}f_1(\omega|_{n-1})
\end{equation*}
holds whenever $m>m_0$,
we can always obtain the smallest $n=n(\omega,m)\ge m$ such that
\begin{align*}
  S_{n-m}f_1(\omega|_n)>-\log(\eps)-\gamma_{ m }
\end{align*}
and
\begin{align*}
  S_{k}f_{1}((\omega|_n)|_{ m +k})\leq-\log(\eps)-\gamma_{ m },\; k=0,\dots, n-m-1.
\end{align*}
This yields $\omega|_n\in \De_{\eps}^m$ and consequently
\begin{align}\label{K_can_be_covered_nicely}
  K\subseteq \bigcup_{L\in \De_{\eps}^m}K_L  
\end{align}
for all $m>m_0$.
Fix $ m>m_0 $, $|J|=m$ and $I\in  \De_{\eps}(J)$. Recall the definition of $ \tilde\eps_{|J|} $, given in \eqref{eps_tilde}.
First we want to show, that if we assume $\gamma_{|J|}$ big enough, we obtain
\begin{equation}\label{eq:lessserThanEps_m}
0<\eps e^{S_{|I|}f_1(IJ)}\le\eps e^{S_{|I|}f_2(IJ)}<\tilde\eps_{|J|}\rho^{|J|}.
\end{equation}
To see this, write
\begin{equation*}
\begin{aligned}
\eps e^{S_{|I|}f_1(IJ)}&\stackrel{\eqref{monotonefi}}{\le}\eps e^{S_{|I|}f_2(IJ)}\\
		&=\eps e^{S_{|I|-1}f_2(IJ)}e^{f_2(\sigma^{|I|-1}(IJ))}\\
		&\le\eps e^{S_{|I|-1}f_2(IJ)}\frac{r_{J}}{r_{I_{|I|}J}\kappa^{-}_{I_{|I|},J}}\\
		&\le \eps e^{S_{|I|-1}f_2(IJ)} \frac{R^{|J|}}{\rho^{|J|+1}}\kappa.
\end{aligned}
\end{equation*}
Define $\tau_{|J|}\coloneqq \frac{R^{|J|}}{\rho^{|J|+1}}\kappa $. Then
\begin{align*}
\eps\tau_{|J|} e^{S_{|I|-1}f_2(IJ)}
&\stackrel{\eqref{monotonefi}}{\le}\eps\tau_{|J|} e^{S_{|I|-1}f_2(IJ|_{|I|+|J|-1})}\\
&\le \eps\tau_{|J|} e^{S_{|I|-1}f_1(IJ|_{|I|+|J|-1})+\alpha_{|J|}}\\
&\le \tau_{|J|}e^{-\gamma_{|J|}}e^{\alpha_{|J|}}.
\end{align*}
Here in the last inequality we used that $I\in  \De_{\eps}(J)$.
Thus \eqref{eq:lessserThanEps_m} holds, if
\begin{equation}\label{eq:assumptionOnGamma_m}
\frac{\tau_{|J|}e^{\alpha_{|J|}}}{\tilde\eps_{|J|}\rho^{|J|}}\le e^{\gamma_{ |J| }}.
\end{equation}
Note, that if \eqref{eq:lessserThanEps_m} holds, we have in particular
\begin{align}\label{eps_r_IJ_upper_bound}
\frac{\eps}{r_{IJ}}\stackrel{\eqref{eq:lessserThanEps_m}}{<}\frac{\tilde\eps_{|J|}\rho^{|J|}e^{-S_{|I|}f_2(IJ)}}{r_{IJ}}\stackrel{\eqref{SIf}}{=}\frac{\tilde\eps_{|J|}
\rho^{|J| }}{r_J}\kappa^{-}_{I,J}\le \kappa \tilde\eps_{|J|}.
\end{align}
As there are only finitely many $ J $ with $ |J|=m $, this implies that
\begin{align}\label{D_eps_m_is_finite}
\#\De_{\eps}^m<\infty
\end{align}
for all $ m > m_0 $.

Now assume \eqref{eq:assumptionOnGamma_m} holds for all $J$ such that $|J|>m_0$. We obtain then the following estimate:
\begin{equation}\label{upperExpression}
  \begin{aligned}
    N(\eps, K)&\stackrel{\eqref{K_can_be_covered_nicely}}{\leq}
    \sum_{\stackrel{I\in \De_{\eps}(J)}{|J|=m}} N(\eps , K_{IJ})
    \stackrel{\eqref{packingMIrev}}{\leq} \sum_{\stackrel{I\in \De_{\eps}(J)}{|J|=m}} N(\eps e^{S_{|I|}f_1(IJ)}, K_J)\\
    &\stackrel{{\color{white}\eqref{packingMIrev}}}{=}\sum_{\stackrel{I\in
        \De'_{\eps}(J)}{|J|=m}} N(\eps e^{S_{|I|}f_1(IJ)}, K_J)
    +\sum_{\stackrel{I\in
        \De''_{\eps}(J)}{|J|=m}} N(\eps e^{S_{|I|}f_1(IJ)}, K_J)\\
    &\stackrel{{\color{white}\eqref{packingMIrev}}}{\coloneqq}U_{\eps}^{m}+Q_{\eps}^{m}.
  \end{aligned}
  \end{equation}
  Similarly, one has
\begin{equation}\label{lowerExpression}
  \begin{aligned}
    N(\eps, K)&= \sum_{\stackrel{I\in \De'_{\eps}(J)}{|J|=m}} N(\eps , K_{IJ})-\Biggl(\sum_{\stackrel{I\in \De'_{\eps}(J)}{|J|=m}} N(\eps , K_{IJ})-N(\eps, K) \Biggr)\\
    &\geq \sum_{\stackrel{I\in \De'_{\eps}(J)}{|J|=m}} N(\eps
    e^{S_{|I|}f_1(IJ)+\alpha_{m}}, K_J)-R_{\eps}^{m}\\
    &=:L_{\eps}^{m}-R_{\eps}^{m}.
  \end{aligned}
\end{equation}
Here we define
\begin{equation}
  R_{\eps}^m\coloneqq \sum_{L\in \De_{\eps}^{'m}} N(\eps , K_{L})-N(\eps, K).
\end{equation}
Note that by \eqref{D_eps_m_is_finite}, above sums are always finite.

Now the following two lemmas imply Theorem~\ref{MainTheoren}:
\begin{lem}\label{L:estimateMainTerms}
  There are $0<U_{m}<\infty$ and $0<L_{m}<\infty$, $m>m_0$, such that
  \begin{align*}
    \eps^{s}U_{\eps}^m\to U_m,\quad \eps^{s}L_{\eps}^m\to L_m
  \end{align*}
  as $\eps\to 0+$ and
  \begin{equation}\label{goes_to_zero}
    \left|U_m-L_m\right|\to0
  \end{equation}
  as $m\to\infty$.
\end{lem}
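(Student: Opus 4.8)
The plan is to extract the asymptotics of $U^m_\eps$ and $L^m_\eps$ from Lalley's renewal theorem (Proposition~\ref{renewal_theorem}), by recognising the families $\De'_\eps(J)$ as first-passage sets for the Birkhoff sums of $f_1$. Throughout, $m>m_0$ is fixed and we write $a=a(\eps):=-\log\eps-\gamma_m$, so that $a\to+\infty$ as $\eps\to0+$ and $\eps e^{t}=e^{-\gamma_m}e^{\,t-a}$ for every $t$.

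First I would fix $J$ with $|J|=m$ and introduce the non-increasing, nonnegative function $G_{J,m}(t):=N(e^{-\gamma_m}e^{t},K_J)$ on $\er$; since $K_J\neq\varnothing$, \ref{C:comparability} forces $G_{J,m}\ge 1/B$, and $G_{J,m}$ is bounded on $(0,\infty)$ by $N(e^{-\gamma_m},K_J)<\infty$. With this notation the summands of $U^m_\eps$ and $L^m_\eps$ become $G_{J,m}(S_{|I|}f_1(IJ)-a)$ and $G_{J,m}(S_{|I|}f_1(IJ)-a+\alpha_m)$. Next I would split the sum over $I\in\De'_\eps(J)$ according to its last letter $i$ (for small $\eps$ one has $\varnothing\notin\De'_\eps(J)$, so $I$ factors as $I=Pi$). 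Using \eqref{strictf} and \eqref{monotonefi} one checks that $j\mapsto S_jf_1(PiJ)$ is increasing on $\{0,\dots,|P|\}$, so that the single inequality \eqref{uppergammamalt} already implies all the intermediate ones; hence $\{P:Pi\in\De'_\eps(J)\}$ is precisely the set $\{P:(a,P)\in P(iJ)\}$ of \eqref{P(L)} with $f_*=\tilde f_1$, and therefore, taking $g_*\equiv1$, $G=G_{J,m}$ and $L=iJ$ in \eqref{N_G(a,L)},
\[
 U^m_\eps=\sum_{|J|=m}\ \sum_{i:\,iJ\in\T^*}N_{G_{J,m}}(a,iJ),\qquad
 L^m_\eps=\sum_{|J|=m}\ \sum_{i:\,iJ\in\T^*}N_{G_{J,m}(\cdot+\alpha_m)}(a,iJ).
\]
The hypotheses of Proposition~\ref{renewal_theorem} are then in force ($\tilde f_1$ and the constant $1$ are $\alpha$-H\"older on $\T\cup\T^*$; $\tilde f_1>0$ on $\T\cup\bigcup_{i\ge m_0+1}\T_i$ by \eqref{strictf} and $f=\tilde f_1|_{\T}>0$; $\var_0(1)=0$; $f$ non-lattice by \eqref{M:holder/nonlattice}; $G_{J,m}$ and its translate nonnegative monotone; $|iJ|=m+1\ge m_0+1$), and the pressure zero $\delta$ equals $s$ by Proposition~\ref{bowenformula}. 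As $e^{-as}=\eps^{s}e^{s\gamma_m}$, Proposition~\ref{renewal_theorem} gives for each of the finitely many $(J,i)$ that $\eps^{s}N_{G_{J,m}}(a,iJ)\to e^{-s\gamma_m}\int_0^\infty G_{J,m}(t)\,F(iJ,dt)$ as $\eps\to0+$, and likewise for the translate; summing yields $\eps^{s}U^m_\eps\to U_m$ and $\eps^{s}L^m_\eps\to L_m$, where $U_m$, $L_m$ are the corresponding finite sums. Every contributing integral is finite ($G_{J,m}$ bounded on $(0,\infty)$; $F(iJ,\cdot)$ a finite measure with $F(iJ,0)=0$) and strictly positive (because $G_{J,m}\ge1/B$ and $F(iJ,\infty)=C_*(iJ)(1-e^{-sf_1(iJ)})>0$), so $0<U_m,L_m<\infty$.

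For \eqref{goes_to_zero} I would use that $G_{J,m}(\cdot+\alpha_m)$ is simply $G_{J,m}$ translated and $\alpha_m\ge0$, so that after a translation of the integration variable
\[
 0\le U_m-L_m=e^{-s\gamma_m}\!\!\sum_{|J|=m}\ \sum_{i:\,iJ\in\T^*}\int_{\er}G_{J,m}(t)\,\bigl(w_{iJ}(t)-w_{iJ}(t-\alpha_m)\bigr)\,dt,
\]
where $F(iJ,dt)=w_{iJ}(t)\,dt$; the density $w_{iJ}$ is supported in $[0,f_1(iJ)]$ and, by continuity of $C_*$ and $f_1=\tilde f_1$ on the compact set $\T\cup\T^*$ (Lemma~\ref{compactnesslemma}), is bounded and of bounded total variation uniformly in $(J,i)$. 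Hence $\int_{\er}|w_{iJ}(t)-w_{iJ}(t-\alpha_m)|\,dt\le\alpha_m\operatorname{Var}(w_{iJ})$, and on the (bounded) support of the integrand $G_{J,m}\le G_{J,m}(0)=N(e^{-\gamma_m},K_J)$. Finally, applying \eqref{packingMIrev} with $J$ replaced by $\varnothing$, together with \eqref{M:kappa} and $\limsup_{\eta\to0}\eta^{s}N(\eta,K)<\infty$ from \eqref{sRegularInequalinies}, produces a constant $C$ independent of $J$ with $N(e^{-\gamma_m},K_J)\le C\,r_J^{s}e^{s\gamma_m}$ once $\gamma_m$ is large enough (a constraint we impose when fixing the $\gamma_m$). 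Combining this with $\sum_{|J|=m}r_J^{s}\le E$ from \ref{triAlt} gives $U_m-L_m\le C'\alpha_m$, and $\alpha_m\to0$ by \eqref{alpha_m_to_one}.

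The step I expect to be the main obstacle is this last one. Because $N$ may be genuinely discontinuous, $G_{J,m}(t)-G_{J,m}(t+\alpha_m)$ does not tend to $0$ pointwise; worse, $G_{J,m}$ ``zooms in'' to the vanishing scale $e^{-\gamma_m}$ as $m$ grows, so a naive comparison only gives $U_m-L_m\le(1-\text{const})\,U_m$, which is not enough. The way around it is to avoid pointwise smallness entirely: the total oscillation of the monotone function $G_{J,m}$ over intervals of length $\alpha_m$, integrated against the absolutely continuous (hence nonatomic) measure $F(iJ,\cdot)$ and renormalised by $e^{-s\gamma_m}$, is controlled by $\alpha_m\sum_{|J|=m}r_J^{s}$; the uniform scaling estimate $N(\cdot,K_J)\lesssim r_J^{s}(\cdot)^{-s}$ is exactly what makes the renormalisation exact, while $\sum_{|J|=m}r_J^{s}\le E$ keeps the growing number of summands from hurting us.
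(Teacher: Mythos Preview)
Your argument is correct. The first half—rewriting $U^m_\eps$ and $L^m_\eps$ as sums of $N_{G_J}(a,iJ)$ and $N_{\tilde G_J}(a,iJ)$ and invoking Proposition~\ref{renewal_theorem}—is exactly what the paper does.

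For \eqref{goes_to_zero} you take a different route. The paper quotes \cite[Corollary~3.3]{Lalley1989}, a uniform continuity-under-translation statement for renewal integrals of normalised monotone functions, to obtain a $\delta_m\to0$ with
\[
\Bigl|\int_0^\infty \tfrac{G_J(t+\alpha_m)}{G_J(0)}F(L,dt)-\int_0^\infty \tfrac{G_J(t)}{G_J(0)}F(L,dt)\Bigr|<\delta_m,
\]
and then multiplies back by $G_J(0)$ and applies Lemma~\ref{useful_estimate_stuff}. You instead exploit the explicit form of $F(iJ,\cdot)$ given in Proposition~\ref{renewal_theorem}: its density $w_{iJ}$ is compactly supported with total variation bounded by $2s\sup C_*$, uniformly in $(i,J)$; the standard BV inequality $\int|w(t)-w(t-h)|\,dt\le|h|\operatorname{Var}(w)$ then gives directly $|U_m-L_m|\lesssim\alpha_m$, and you finish with the same bound $N(e^{-\gamma_m},K_J)\lesssim r_J^s e^{s\gamma_m}$ (this is Lemma~\ref{useful_estimate_stuff} in the paper) and \ref{triAlt}. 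Your version is more self-contained and yields the slightly sharper explicit rate $\alpha_m$; the paper's version treats Lalley's corollary as a black box and so would survive even without the explicit density formula.
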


\begin{lem}\label{L:estimateNegligible}
  There are $Q_{m},R_m$, $m>m_0$, such that
  \begin{equation}\label{limsup_estimate}
    \limsup_{\eps\to 0+}\eps^{s}Q_{\eps}^m\leq Q_m\quad\text{and}\quad \limsup_{\eps\to 0+}\eps^{s}R_{\eps}^m\leq R_m,
  \end{equation}
  and
  \begin{equation}\label{error_goes_to_zero}
    Q_m\to 0\quad\text{and}\quad R_m\to 0
  \end{equation}
  as $m\to\infty$.
\end{lem}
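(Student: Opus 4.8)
The two quantities $Q_\eps^m$ and $R_\eps^m$ are the two ``error'' features of the decomposition \eqref{upperExpression}--\eqref{lowerExpression}: $Q_\eps^m$ measures the gap between the stopping rules \eqref{uppergammam} and \eqref{uppergammamalt}, while $R_\eps^m$ measures the failure of the pieces $\{K_{L}\}_{L\in\De_{\eps}^{m}}$ to be disjoint. The plan is to show that both gaps shrink with $m$ --- the first because the variations of the H\"older function $\tilde f_1$ decay geometrically, the second because of the small overlaps guaranteed by \ref{M:inclusionx_I}. Starting with $Q_\eps^m$, the key observation is that $I\in\De''_{\eps}(J)$ traps $S_{|I|-1}f_1(IJ)$ in a short interval: \eqref{uppergammam} at $k=|I|-1$ gives $S_{|I|-1}f_1(IJ|_{|J|+|I|-1})\le-\log\eps-\gamma_m$, whereas the failure of \eqref{uppergammamalt} gives $S_{|I|-1}f_1(IJ)>-\log\eps-\gamma_m$, and since $IJ$ and $IJ|_{|J|+|I|-1}$ differ only in their last letter, $\alpha$-H\"older continuity of $\tilde f_1$ bounds the difference of the corresponding ergodic sums by $\delta_m:=\sum_{j>m}\var_j(\tilde f_1)\le|\tilde f_1|_\alpha\,\alpha^{m+1}/(1-\alpha)\to0$. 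Next one checks that every $K_J$ is $s$-regular with constants uniform in $J$ (the upper $s$-regularity bound for $K$ applies at every $x\in K_J\subseteq K$, and $\diam K_J\le Dr_J$), so that $N(\tau,K_J)\le c\,r_J^s\,\tau^{-s}$ for small $\tau\le r_J$; applying this with $\tau=\eps e^{S_{|I|}f_1(IJ)}$ --- which is $\le\tilde\eps_m\rho^m\le r_J$ by \eqref{eq:lessserThanEps_m}, once $\gamma_m$ meets \eqref{eq:assumptionOnGamma_m} --- together with $e^{-sS_{|I|}f_1(IJ)}=(\kappa^+_{I,J}r_{IJ}/r_J)^s$ from \eqref{SIf} and \eqref{M:kappa}, one obtains
\begin{equation*}
\eps^{s}Q_\eps^{m}\le c\,\kappa^{s}\sum_{\substack{|J|=m\\ I\in\De''_{\eps}(J)}}r_{IJ}^{s}.
\end{equation*}

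To finish, one estimates the right-hand side by Proposition~\ref{renewal_theorem}. Writing $r_{IJ}^s\le\kappa^s r_J^s e^{-sS_{|I|}f_1(IJ)}$, it suffices to bound $\sum_{I\in\De''_{\eps}(J)}e^{-sS_{|I|}f_1(IJ)}$ for each of the finitely many $J$ with $|J|=m$. The set $\De''_{\eps}(J)$ is not literally a renewal ladder set, but by the estimate above it differs from the ladder set $P(J)$ of \eqref{P(L)} --- formed with $f_*=\tilde f_1$ at level $-\log\eps-\gamma_m$, whose pressure zero equals $s$ by Proposition~\ref{bowenformula} --- only by index shifts and truncations of size $O(\delta_m)$, and it picks out exactly those ladder indices whose penultimate partial sum $S_{|I|-1}f_1(IJ)$ lies in a window of width $O(\delta_m)$. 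Sandwiching $\sum_{I\in\De''_{\eps}(J)}e^{-sS_{|I|}f_1(IJ)}$ between quantities $N_G$ with $G$ an indicator of such a window, Proposition~\ref{renewal_theorem} gives $\limsup_{\eps\to0+}\sum_{I\in\De''_{\eps}(J)}e^{-sS_{|I|}f_1(IJ)}\le C\,\delta_m$ with $C$ independent of $m$ (the function $C_*$ is bounded on the compact space $\T\cup\T^*$); summing over $J$ and using $\sum_{|J|=m}r_J^s\le E$ from \ref{triAlt} yields $\limsup_{\eps\to0+}\eps^sQ_\eps^m\le Q_m:=c\,\kappa^{2s}CE\,\delta_m$, and $Q_m\to0$. (Equivalently, $U_\eps^m$ and $U_\eps^m+Q_\eps^m$ are both governed by Proposition~\ref{renewal_theorem} applied to the nested almost-ladder sets $\De'_{\eps}(J)\subseteq\De_{\eps}(J)$, whose renewal limits differ by $O(\delta_m)$.)

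For $R_\eps^m$: since $\De_{\eps}^{'m}\subseteq\De_{\eps}^{m}$ and $\bigcup_{L\in\De_{\eps}^{m}}K_L=K$ by \eqref{K_can_be_covered_nicely}, one has $0\le R_\eps^m\le\sum_{L\in\De_{\eps}^{m}}N(\eps,K_L)-N\bigl(\eps,\bigcup_{L\in\De_{\eps}^{m}}K_L\bigr)$, i.e.\ $R_\eps^m$ is at most the overcounting of the cover $\{K_L\}_{L\in\De_{\eps}^{m}}$. I would bound this overcounting by contributions localised where distinct cylinders meet: by \eqref{copy_in_ball} and \ref{jedna} the ``interior copies'' $K_{L\phi(L)}$ sit in pairwise disjoint balls $B(x_L,\tfrac{C}{2}r_L)$, and since $r_L/\eps\ge\kappa/(\xi\rho^m)$ uniformly (by \eqref{eq:lessserThanEps_m}) these balls are $>A\eps$-separated once $m$ is large, so \ref{C:separation}, \ref{C:subadititivity} and the doubling property (to bound the cover multiplicity) show that passing from $\sum_LN(\eps,K_L)$ to $N(\eps,\bigcup_LK_L)$ costs only $O\bigl(\sum_L\max_{L'}N(C\eps,K_L\cap(K_{L'})_{A\eps})\bigr)$ over incomparable $L,L'$. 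The small-overlap estimate Lemma~\ref{L:estimateOverlaps} --- where assumption \ref{M:inclusionx_I} and the geometric decay of Lemma~\ref{prunnedTreeLemma} enter --- bounds each such term by a vanishing fraction of $r_L^s\eps^{-s}$; combined with $\sum_{L\in\De_{\eps}^{m}}r_L^s$ remaining bounded (Proposition~\ref{renewal_theorem} with $G\equiv1$, or: $\De_{\eps}(J)$ splits into boundedly many antichains of cylinders in $[J]$), this gives $\limsup_{\eps\to0+}\eps^sR_\eps^m\le R_m$ with $R_m\to0$ as $m\to\infty$.

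The step I expect to be the real obstacle is the $R_\eps^m$-estimate: controlling the overlaps of the cylinders $K_L$ is precisely the feature absent from Lalley's Schottky-group setting, and converting ``few, $\mu$-negligible overlaps'' into a bound that still tends to $0$ after multiplying by $\eps^s$ and summing over the $\asymp\eps^{-s}$ pieces of the cover requires a careful interchange of the limits $\eps\to0$ and $m\to\infty$. A secondary technical point, used throughout, is that $\De_{\eps}^{m}$, $\De'_{\eps}(J)$ and $\De''_{\eps}(J)$ are only approximate renewal ladder sets, so each appeal to Proposition~\ref{renewal_theorem} must be preceded by sandwiching them between honest ladder sets $P(\cdot)$ whose test functions differ by the quantities $O(\var_j(\tilde f_1))$; this is routine, and it is exactly what makes $|U_m-L_m|$ (in Lemma~\ref{L:estimateMainTerms}), $Q_m$ and $R_m$ all controllable by the single small parameter $\delta_m$.
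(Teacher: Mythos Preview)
Your plan is sound and lands on the right result, but it diverges from the paper in both halves.

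For $Q_\eps^m$ the paper's proof is shorter and uses a different small parameter. It does not invoke the renewal theorem at all for this term: instead it bounds each summand uniformly by $N(e^{-\gamma_m},K_J)\le\Lambda e^{s\gamma_m}r_J^s$ (Lemma~\ref{useful_estimate_stuff}) and then bounds the \emph{cardinality} $\#\De''_\eps(J)\le\eps^{-s}\Xi e^{-s\gamma_m}\alpha_m$ by quoting Lalley's Lemma~13.6; multiplying and summing over $|J|=m$ via \ref{triAlt} gives $Q_m=\Lambda\Xi\alpha_m E$. The small parameter is $\alpha_m=\sup_{|J|=m}\log(\kappa^+_{I,J}/\kappa^-_{I,J})\to0$ from \eqref{M:kappa_to_one}, not your H\"older tail $\delta_m=\sum_{j>m}\var_j(\tilde f_1)$. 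Your window argument on $S_{|I|-1}f_1(IJ)$ is correct and gives an alternative route, but turning it into an honest application of Proposition~\ref{renewal_theorem} (your ``sandwiching between quantities $N_G$ with $G$ an indicator'') requires the extra bookkeeping of identifying $\De''_\eps(J)$ with a difference of two ladder sets of type $P(L)$; the paper avoids this by the counting shortcut.

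For $R_\eps^m$ the paper's proof is literally one sentence: it follows directly from Lemma~\ref{L:estimateOverlaps}, which already gives $R_\eps^m\le\eps^{-s}\Gamma_m$ with $\Gamma_m=ME(1-\rho^s)^{m+1}\to0$. You correctly point to that lemma, but your independent sketch of the mechanism is imprecise: the actual argument does not estimate pairwise intersections $K_L\cap(K_{L'})_{A\eps}$; instead, for each $I\in\De_\eps^{'m}$ it iteratively subtracts off the $A\eps$-separated interior copies $K_{IL\phi(IL)}$ for $L\in\widetilde\T^*(I)$, $|L|\le m$, via \ref{C:separation}, reducing the residual after $m+1$ steps to $\sum_{J\in\widetilde\T^*_{m+1}(I)}N(\eps,K_{IJ})$; Lemma~\ref{prunnedTreeLemma} then gives the geometric decay $(1-\rho^s)^{m+1}$. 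Your ``$\De_\eps(J)$ splits into boundedly many antichains'' is unnecessary: $\De_\eps^{'m}$ itself is an antichain, so $\sum_{I\in\De_\eps^{'m}}r_I^s\le E$ directly from \ref{triAlt}.
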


\begin{proof}[Proof of Theorem~\ref{MainTheoren}]
  By \eqref{upperExpression} and \eqref{lowerExpression} we have for all $m>m_0$
  \begin{equation*}
    \begin{aligned}
      L_m-R_m
      &\stackrel{\eqref{limsup_estimate}}{\leq} \liminf_{\eps\to0+}\eps^s L_{\eps}^m-\limsup_{\eps\to0+}\eps^s R_{\eps}^m\\
      &\stackrel{{\color{white}\eqref{error_goes_to_zero}}}{\leq} \liminf_{\eps\to0+} \eps^s N(\eps,K)\\
      &\stackrel{{\color{white}\eqref{error_goes_to_zero}}}{\leq} \limsup_{\eps\to0+} \eps^s N(\eps,K)\\
      &\stackrel{{\color{white}\eqref{error_goes_to_zero}}}{\leq} \limsup_{\eps\to0+} \eps^s U_{\eps}^m+\limsup_{\eps\to0+} \eps^s Q_{\eps}^m\\
      &\stackrel{\eqref{limsup_estimate}}{\leq} U_m+Q_m.
    \end{aligned}
  \end{equation*}
  Now there exists a subsequence $\{m_k\}_{k\in\N}$ such that $\lim_{k\to\infty}L_{m_k}= \theta \in[-\infty,\infty]$.
  Hence by \eqref{goes_to_zero} we obtain $\lim_{k\to\infty}U_{m_k}= \theta $ and
  \begin{align*}
    \theta &\stackrel{\eqref{error_goes_to_zero}}{=} \lim_{k\to\infty}L_{m_k}-R_{m_k}\\
    &\stackrel{{\color{white}\eqref{error_goes_to_zero}}}{\leq}\liminf_{\eps\to0+} \eps^s N(\eps,K)\\
    &\stackrel{{\color{white}\eqref{error_goes_to_zero}}}{\leq} \limsup_{\eps\to0+} \eps^s N(\eps,K)\\
    &\stackrel{{\color{white}\eqref{error_goes_to_zero}}}{\leq} \lim_{k\to\infty}U_{m_k}+Q_{m_k}\stackrel{\eqref{error_goes_to_zero}}{=}\theta .
  \end{align*}
  Using \eqref{sRegularInequalinies} this implies $\lim_{\eps\to0+} \eps^s N(\eps,K)=\theta \in(0,\infty)$,
  which completes the proof.
\end{proof}

\begin{lem}\label{L:estimateOverlaps}
  There are $\Gamma_{m}$ and $\eps_{m}$ for each $m>m_0$, such that
  $\Gamma_m\to0$ as $m\to\infty$, and
  \begin{equation}
    R_{\eps}^m\leq \eps^{-s}\Gamma_m
  \end{equation}
  for every $0<\eps<\eps_{m}$.
\end{lem}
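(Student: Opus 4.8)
Here is how I would approach the proof.

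The plan is to show that the overlap $R^m_\eps$ is controlled, up to a factor that tends to $0$ with $m$, by the $r^s$-mass of the sub-pieces of the $K_L$'s ($L\in\De_\eps^{'m}$) that avoid, for $k=k(m)$ generations, all the ``copies'' furnished by \eqref{copy_in_ball}; Lemma~\ref{prunnedTreeLemma} bounds this mass by $(1-\rho^s)^k$, and choosing $k(m)$ so that $k(m)\to\infty$ while $k(m)\lesssim m/2$ keeps these copies mutually far apart on the scale $\eps$.

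First I would record the structural facts about $\De_\eps^{'m}$. It is finite by \eqref{D_eps_m_is_finite}; write $\De_\eps^{'m}=\{L_1,\dots,L_p\}$. Its members are pairwise incomparable, and since $\De_\eps^{'m}\subseteq\De_\eps^m$ while $\De_\eps^m$ is a cut of $\T$ (every $\omega\in\T$ has a unique prefix there — this uses the strict monotonicity in $k$ of $S_kf_1(\omega|_{m+k})$, which comes from \eqref{strictf} and \eqref{monotonefi}, exactly as in the construction of $n(\omega,m)$ preceding \eqref{K_can_be_covered_nicely}), \ref{triAlt} gives $\sum_i r_{L_i}^s\le\sum_{L\in\De_\eps^m}r_L^s\le E$. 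We may assume $\eps<\kappa\tilde\eps_m$ (otherwise $\De_\eps^{'m}=\varnothing$ and $R^m_\eps\le0$), and then \eqref{eps_r_IJ_upper_bound} gives $r_{L_i}>\eps\kappa/(\xi\rho^m)$ for every $i$. I would take the function $i(\cdot)$ of Lemma~\ref{prunnedTreeLemma} to be the single-letter $\phi$ of \eqref{copy_in_ball}, so $K_{Ii(I)}\subseteq B(x_I,\tfrac{C}{2}r_I)$, fix $k=k(m)$ with $k(m)\to\infty$ and $m-k(m)\to\infty$ (e.g. $k(m)=\lceil m/2\rceil$), and for each $i$ split $K_{L_i}=B_i\cup V_i$, where $B_i$ is the union of the $K_{L_iJ}$ with $|J|=k$ and $J\in\widetilde{\T}^*(L_i)$ and $V_i$ is the union of the remaining $K_{L_iJ}$, $|J|=k$. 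If $K_{L_iJ}$ is a piece of $V_i$, then $L_iJ$ extends some $L_iMi(L_iM)$ with $|M|\le k-1$, hence $K_{L_iJ}\subseteq B(x_{L_iM},\tfrac{C}{2}r_{L_iM})$ by \eqref{copy_in_ball}.

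Next I would establish the $A\eps$-separation of the $V_i$'s. For $i\ne j$ the words $L_i,L_j$, hence also $L_iM$ and $L_jM'$, are incomparable, so \ref{jedna} yields $d(x_{L_iM},x_{L_jM'})\ge C(r_{L_iM}+r_{L_jM'})$, and therefore any piece of $V_i$ lies at distance $\ge\tfrac{C}{2}(r_{L_iM}+r_{L_jM'})\ge C\rho^{k-1}\min_\ell r_{L_\ell}>\tfrac{C\kappa}{\xi}\,\rho^{\,k-1-m}\eps$ from any piece of $V_j$. Since $k-1-m\to-\infty$ and $0<\rho<1$, we have $\rho^{\,k-1-m}\to\infty$, so this exceeds $A\eps$ once $m$ is large (the finitely many smaller $m>m_0$ are dealt with by a crude bound $R^m_\eps\le\sum_iN(\eps,K_{L_i})\le C(m)\,\eps^{-s}$, valid for small $\eps$ since the $K_L$ with $L\in\De_\eps^m$ cover $K$ with overlap multiplicity bounded in terms of $m$). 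For large $m$, applying \ref{C:separation} $p-1$ times together with \ref{C:monotonicityInInclusion} gives $\sum_iN(\eps,V_i)=N\bigl(\eps,\bigsqcup_iV_i\bigr)\le N(\eps,K)$; combining this with the subadditivity bound $N(\eps,K_{L_i})\le N(\eps,V_i)+N(\eps,B_i)$ (\ref{C:subadititivity}) yields $R^m_\eps=\sum_iN(\eps,K_{L_i})-N(\eps,K)\le\sum_iN(\eps,B_i)$.

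Finally I would bound $\sum_iN(\eps,B_i)$. By \ref{C:subadititivity}, $N(\eps,B_i)\le\sum_{|J|=k,\,J\in\widetilde{\T}^*(L_i)}N(\eps,K_{L_iJ})$. Because $r_{L_iJ}\ge\rho^kr_{L_i}\ge\tfrac{\kappa}{\xi}\rho^{\,k-m}\eps$ and $\rho^{\,m-k}\to0$, the transfer estimate \eqref{packingMIrev} (with inner word $\varnothing$) combined with \eqref{sRegularInequalinies} gives, for all large $m$, $N(\eps,K_{L_iJ})\le c_0\,(r_{L_iJ}/\eps)^s$ with an absolute constant $c_0$; hence, by Lemma~\ref{prunnedTreeLemma}, $N(\eps,B_i)\le c_0\eps^{-s}\sum_{|J|=k,\,J\in\widetilde{\T}^*(L_i)}r_{L_iJ}^s\le c_0E^2\eps^{-s}r_{L_i}^s(1-\rho^s)^k$. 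Summing over $i$ and using $\sum_ir_{L_i}^s\le E$ gives $R^m_\eps\le c_0E^3(1-\rho^s)^{k(m)}\,\eps^{-s}=:\eps^{-s}\Gamma_m$, with $\Gamma_m\to0$ since $0<1-\rho^s<1$ and $k(m)\to\infty$; any fixed positive $\eps_m$ works. The main obstacle is the coupled choice of $k=k(m)$: it must tend to infinity so that $(1-\rho^s)^k$ decays, yet stay below roughly $m/2$ so that the depth-$k$ copies still have size comparable to $\eps$ — which is precisely what keeps the $V_i$ mutually $A\eps$-separated and makes the uniform estimate $N(\eps,K_{L_iJ})\lesssim(r_{L_iJ}/\eps)^s$ valid — after which one must keep careful track of which constants are absolute and which are allowed to depend on $m$.
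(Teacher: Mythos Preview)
Your argument is essentially correct and takes a genuinely different (and in some ways cleaner) route than the paper.

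\textbf{What the paper does.} The paper first fixes $\gamma_m$ so that, in addition to \eqref{eq:assumptionOnGamma_m}, the lower bound \eqref{how_to_choose_gamma_m} holds; this forces $\diam K_L\gtrsim e^{\gamma_m}\eps$ for every $L\in\De_\eps^{'m}$ and hence guarantees that \emph{all} the copies $K_{LL'\phi(LL')}$ with $|L'|\le m$, across \emph{all} $L\in\De_\eps^{'m}$, are mutually $A\eps$-separated (see \eqref{farEnough}). One then peels those copies off from $N(\eps,K)$ via \ref{C:separation} and runs an inductive telescoping inequality \eqref{inductionInequality} inside each $L$ to reach depth $m+1$; the remainder is finally controlled by the comparison with $\eS$ and Lemma~\ref{prunnedTreeLemma}, yielding decay $(1-\rho^s)^{m+1}$.

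\textbf{What you do differently.} You only separate \emph{between} the distinct $L_i\in\De_\eps^{'m}$: the union $V_i$ of those depth-$k$ pieces of $K_{L_i}$ which pass through some $\phi$-copy sits inside balls $B(x_{L_iM},\tfrac{C}{2}r_{L_iM})$, and \ref{jedna} plus the size bound \eqref{eps_r_IJ_upper_bound} make $V_i$ and $V_j$ at distance $\gtrsim\rho^{k-1-m}\eps$. Choosing $k(m)\approx m/2$ makes this exceed $A\eps$ for large $m$, so \ref{C:separation} and \ref{C:monotonicityInInclusion} give $\sum_iN(\eps,V_i)\le N(\eps,K)$ directly, without the inductive step. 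The complement $B_i$ is then handled by Lemma~\ref{prunnedTreeLemma}, producing decay $(1-\rho^s)^{k(m)}$. The trade-off: you avoid the choice \eqref{how_to_choose_gamma_m} and the recursion \eqref{inductionInequality} entirely, at the cost of a slower (but still sufficient) decay rate.

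\textbf{Two small points to tighten.} First, the claim that $\De_\eps^{'m}=\varnothing$ once $\eps\ge\kappa\tilde\eps_m$ is not quite the right threshold; what is true (and enough) is that $\De_\eps^{'m}=\varnothing$ for $\eps>e^{-\gamma_m}$, so any $\eps_m>0$ indeed works. Second, your ``overlap multiplicity bounded in terms of $m$'' justification for the crude bound at small $m$ is not needed and is not obviously correct as stated; the same transfer estimate you use in the main argument (first pass to $\eS$ via \ref{C:comparability}, for which $G=0$ and \eqref{packingMI} holds without restriction, then invoke $s$-regularity and $\sum_i r_{L_i}^s\le E$) already gives $R_\eps^m\le C(m)\,\eps^{-s}$ for each fixed $m>m_0$.
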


\begin{proof}
  Put for all $m> m_0$
  \begin{equation*}
    \tilde\Gamma_{m}:=\inf_{L\in\mathcal{T}_{m+1}}e^{-\alpha_{m+1}}\diam(K_{L})
  \end{equation*}
  and take $\gamma_m$ such that \eqref{eq:assumptionOnGamma_m} and
  \begin{equation}\label{how_to_choose_gamma_m}
    e^{\gamma_m}> \frac{2AD}{\Upsilon_{i}C\tilde\Gamma_{m}}
  \end{equation}
  hold for every $i=1,\dots, m+1$ (recall that $A>0$ is the constant guaranteed by \ref{C:separation}, and $ C,D $ are given by \ref{jedna} and \ref{dva}).
  The constants $\Upsilon_{i}$ are
  defined in \eqref{diam_estimate}.  Now pick $\eps_m$ in such a way that
  \begin{equation}\label{epsilon_m}
    f_1(J)\le-\log(\eps_m)-\gamma_m
  \end{equation}
  for all $J\in \T_m$.  This in particular implies that for all
  $\eps<\eps_m$
  \begin{equation}\label{IlongEnough}
    |I|\geq m+1\quad\text{whenever}\quad I\in\De_{\eps}^{'m}
    \end{equation}
  For the rest of the proof, fix $m>m_0$ and $\eps_m>\eps>0$.

  \textit{Step 1.} Fix $IJ\in\De_{\eps}^{'m}$, where $|J|=m$. First
  note that
  \begin{equation}\label{diameter_estimate}
    \diam K_{IJ}\geq \tilde\Gamma_{m} e^{\gamma_{m}}\eps.
  \end{equation}
  Indeed, by \eqref{uppergammamalt} we can write
  \begin{equation*}
    \begin{aligned}
      \eps e^{\gamma_m}&\stackrel{{\eqref{IlongEnough}}}{\leq} e^{- S_{|I|-1}f_{1}(IJ)}=e^{\alpha_{m+1}}e^{- S_{|I|-1}f_{1}(IJ)-\alpha_{m+1}}\leq  e^{\alpha_{m+1}}e^{- S_{|I|-1}f_{2}(IJ)}\\
      &\stackrel{{\color{white}\eqref{diameterMIrev}}}{=} \frac{e^{\alpha_{m+1}}}{\diam(K_{\sigma^{|I|-1}IJ})}e^{- S_{|I|-1}f_{2}(IJ)}\diam(K_{\sigma^{|I|-1}IJ})\\
      &\stackrel{\eqref{diameterMIrev}}{\leq}
      \frac{e^{\alpha_{m+1}}}{\diam(K_{\sigma^{|I|-1}IJ})}\diam(K_{IJ})\\
      &\stackrel{{\color{white}\eqref{diameterMIrev}}}{\le}\sup_{L\in\mathcal{T}_{m+1}}\frac{e^{\alpha_{m+1}}}{\diam(K_{L})}\diam(K_{IJ})=
      \frac{1}{\tilde\Gamma_{m}}\diam(K_{IJ}).
    \end{aligned}
  \end{equation*}
  Hence for all $L\in\mathcal{T}^*$ one has
  \begin{equation*}
    \frac{C}{2}r_{IJL}\stackrel{\ref{dva}}{\geq} \frac{C\diam K_{IJL}}{2D}\stackrel{\eqref{diam_estimate}}{\geq} \Upsilon_{|L|}\frac{C\diam K_{IJ}}{2D}\stackrel{\eqref{diameter_estimate}}{\geq} \Upsilon_{|L|}\frac{C\tilde\Gamma_{|J|}  e^{\gamma_{|J|}}\eps}{2D}.
  \end{equation*}      
  Due to \eqref{copy_in_ball} we know that
  \begin{align*}
    K_{IJL\phi(IJL)}\subseteq B\left(x_{IJL},\frac{Cr_{IJL}}{2}\right).
  \end{align*}
  Assuming that $\tilde L\in\mathcal{T}^*$ is incomparable with
  $IJL$, condition \ref{jedna} gives
  \[
    d(x_{IJL},x_{\tilde L\tilde J})\ge Cr_{IJL}
  \]
  for all $\tilde J\in\mathcal{T}^*$. This yields
  \begin{equation}\label{farEnough}
    d(K_{IJL\phi(IJL)},K_{\tilde L})\geq \frac{C}{2}r_{IJL}\stackrel{\eqref{how_to_choose_gamma_m}}{>}A\eps.
  \end{equation}

  \textit{Step 2.} 
  For $I\in \De_{\eps}^{'m}$ define
  \begin{equation*}
  \widetilde{\T}_{k}^*(I)\coloneqq\{J\in\T^*:\;|J|=k,\; IM\phi(IM)\nprec IJ\quad\text{for any}\quad M\in\T^*\}.
  \end{equation*}
  Note that
  $\widetilde{\mathcal{T}}^*(I)=\bigcup_{k\ge
  	0}\widetilde{\T}_{k}^*(I)$, where $\widetilde{\mathcal{T}}^*(I)$
  was defined in Lemma~\ref{prunnedTreeLemma}.
  Observe that all elements in $\De_{\eps}^{'m}$ are
  pairwise incomparable.  Using this, we can write
  \begin{equation}
    \begin{aligned}
      R_{\eps}^m\stackrel{{\color{white}\eqref{diameterMIrev}}}{=}&\sum_{I\in \De_{\eps}^{'m}} N(\eps , K_{I})-N(\eps,
      K)\\
      \stackrel{{\color{white}\eqref{diameterMIrev}}}{\leq}&\sum_{I\in \De_{\eps}^{'m}} N(\eps , K_{I})-N\left(\eps, \bigcup_{\substack{I\in \De_{\eps}^{'m} \\ |L|\leq m  \\L\in \widetilde{\mathcal{T}}^*(I)}} K_{IL\phi(IL)}\right)\\
      \stackrel{\eqref{farEnough}}{=}&\sum_{I\in \De_{\eps}^{'m}} N(\eps , K_{I})-\sum_{\substack{I\in \De_{\eps}^{'m} \\ |L|\leq m \\ L\in \widetilde{\mathcal{T}}^*(I)}} N\left(\eps,  K_{IL\phi(IL)}\right)\\
      \stackrel{{\color{white}\eqref{diameterMIrev}}}{=}&\sum_{I\in \De_{\eps}^{'m}} \left(N(\eps ,
        K_{I})-\sum_{\substack{|L|\leq m \\ L\in \widetilde{\mathcal{T}}^*(I)}} N\left(\eps,
          K_{IL\phi(IL)}\right)\right)=:\sum_{I\in \De_{\eps}^{'m}}
      R_{\eps}^{m}(I).
    \end{aligned}
    \end{equation}
    Here in the second equality, we applied \eqref{farEnough} to \ref{C:separation}.
  Fix $I\in \De_{\eps}^{'m}$.  We claim that
  \begin{equation}\label{inductionInequality}
    \begin{aligned}
      \sum_{J\in \widetilde{\T}_{k}^*(I)} N(\eps , K_{IJ})&-\sum_{\substack{k\leq |L|\leq m \\ L\in \widetilde{\mathcal{T}}^*(I)}} N\left(\eps,  K_{IL \phi(IL )}\right)\\
      &\leq \sum_{J\in \widetilde{\T}_{k+1}^*(I)} N(\eps ,
      K_{IJ})-\sum_{\substack{k+1\leq |L|\leq m \\ L\in \widetilde{\mathcal{T}}^*(I)}} N\left(\eps,
        K_{IL \phi(IL )}\right),
    \end{aligned}
  \end{equation}
  whenever $0\leq k\leq m$.  Indeed, we can write
  \begin{equation*}
    \begin{aligned}
      \sum_{J\in \widetilde{\T}_{k}^*(I)} N(\eps , K_{IJ})&-\sum_{\substack{k\leq |L|\leq m \\ L\in \widetilde{\mathcal{T}}^*(I)}} N\left(\eps,  K_{IL \phi(IL )}\right)\\
      &\leq \sum_{J\in \widetilde{\T}_{k}^*(I)}\sum_{i} N(\eps , K_{IJi})-\sum_{\substack{k\leq |L|\leq m \\ L\in \widetilde{\mathcal{T}}^*(I)}} N\left(\eps,  K_{IL \phi(IL )}\right)\\
      &\leq \sum_{J\in \widetilde{\T}_{k}^*(I)}\sum_{i\not=\phi(IJ)} N(\eps , K_{IJi})-\sum_{\substack{k+1\leq |L|\leq m \\ L\in \widetilde{\mathcal{T}}^*(I)}} N\left(\eps,  K_{IL \phi(IL )}\right)\\
      &= \sum_{J\in \widetilde{\T}_{k+1}^*(I)} N(\eps ,
      K_{IJ})-\sum_{\substack{k+1\leq |L|\leq m \\ L\in \widetilde{\mathcal{T}}^*(I)}} N\left(\eps,
        K_{IL \phi(IL )}\right).
    \end{aligned}
  \end{equation*}
  Using \eqref{inductionInequality} $m+1$ times we obtain
  \begin{equation}\label{estimateRI}
    R_{\eps}^{m}(I)\leq \sum_{J\in \widetilde{\T}_{m+1}^*(I)} N(\eps , K_{IJ})\stackrel{\ref{C:comparability}}{\leq}B \sum_{J\in \widetilde{\T}_{m+1}^*(I)}  \eS\left(\frac{\eps}{B}, K_{IJ}\right)
  \end{equation}
  for all $I\in \De_{\eps}^{'m}$.

  \textit{Step 3.} 
  {\color{black} At the end of the previous step we essentially reduced the general case to the situation, where $N=\eS$. 
  This does not have any deep meaning, but serves one important technical purpose. Since the constant $G$ (from \ref{C:lipschitzImage}) equals to $0$ in the case of function $\eS$, we can pick $\xi>0$ arbitrarily, and so equation \eqref{packingMI} holds for any $\eps>0$. This makes some estimates later in the proof slightly easier, since we do not have to worry about $\eps$ being small enough. In what follows we also assume that the constants (mainly $\kappa_{I,J}^\pm$ and $\kappa$) are the ones corresponding to the counting function $\eS$}.

  As a next step we claim that there is a constant $Q$
  independent of $m$ such that
  \begin{equation}\label{comparationEstimate}
     B \eS\left(\frac{\eps}{B}, K_{IJ}\right)\leq Q\frac{r_{IJ}^s}{r_I^s} \eS(\eps, K_{I}).
  \end{equation}

  For this one can write 
  \begin{equation*}
    \begin{aligned}
      \eS\left(\frac{\eps}{B}, K_{IJ}\right)
      &\stackrel{\eqref{packingMI}}{\leq} \eS\left(\frac{\eps }{\kappa^+_{IJ,\varnothing} r_{IJ}B}, K\right)\\
      &\stackrel{L\,\ref{L:differentEpsilons}}{\leq}R\left(\frac{\kappa\tilde\eps_m }{\rho^{m+1}}\kappa\right) \left(B\frac{\kappa^+_{IJ,\varnothing}r_{IJ}}{\kappa^-_{I,\varnothing} r_I}\right)^s \eS\left(\frac{\eps}{\kappa^-_{I,\varnothing} r_{I}}, K\right)\\
      &\stackrel{\eqref{packingMI}}{\le}R\left(\frac{\xi }{\rho}\right)B^s\kappa^{2s}\frac{r_{IJ}^s}{r_I^s}\eS(\eps,
      K_{I}).
    \end{aligned}
    \end{equation*}
    And so we can put $Q\coloneqq BR\left(\frac{\xi }{\rho}\right)B^s\kappa^{2s}$.

  \textit{Step 4.} Combining \eqref{estimateRI} and
  \eqref{comparationEstimate} and Lemma~\ref{prunnedTreeLemma} we can
  write for all $I\in \De_{\eps}^{'m}$
  \begin{equation*}
    \begin{aligned}
      R_{\eps}^{m}(I)&\stackrel{\eqref{estimateRI}}{\leq} \sum_{J\in
        \widetilde{\T}_{m+1}^*(I)}\eS\left(\frac{\eps}{B}, K_{IJ}\right)
      \stackrel{\eqref{comparationEstimate}}{\leq} Q   \sum_{J\in \widetilde{\T}_{m+1}^*(I)} \frac{r_{IJ}^s}{r_I^s} \eS(\eps, K_{I})\\
      &\stackrel{L\,\ref{prunnedTreeLemma}}{\leq} Q E^2 (1-\rho^s)^{m+1}  \eS(\eps, K_{I})\\
      &\stackrel{\eqref{packingMI}}{\leq} Q E^2  (1-\rho^s)^{m+1}   \eS\left(\frac{\eps }{\kappa^+_{I,\varnothing} r_I},K\right)\\
      &\stackrel{L\,\ref{L:differentEpsilons}}{\leq} QE^2  (1-\rho^s)^{m+1} R(\xi \kappa)  \left(\kappa r_{I}\right)^s  \eS(\eps, K)\\
      &\stackrel{\eqref{sRegularInequalinies}}{\leq} QE^2 
      (1-\rho^s)^{m+1} R(\xi\kappa)   W 
      r_{I}^s\eps^{-s}\\
      &\stackrel{{\color{white}\eqref{sRegularInequalinies}}}{\eqqcolon}
      M  (1-\rho^s)^{m+1}r_{I}^s\eps^{-s}.
    \end{aligned}
    \end{equation*}
 In the last inequality the
  constant $W$ is derived from the $\limsup$ in
  \eqref{sRegularInequalinies}.  Finally, we have
  \begin{equation*}
    \sum_{I\in \De_{\eps}^{'m}} R_{\eps}^{m}(I)\leq M   (1-\rho^s)^{m+1}\eps^{-s} \sum_{I\in \De_{\eps}^{'m}} r_{I}^s\stackrel{\ref{triAlt}}{\leq} M E  (1-\rho^s)^{m+1}\eps^{-s}=:\eps^{-s}\Gamma_m,
    \end{equation*}
  which is what we wanted.
\end{proof}
\begin{lem}\label{useful_estimate_stuff}
 There exists a constant $\Lambda$, independent of $|J|$, such that
 \begin{align*}
 N\left(e^{-\gamma_{|J|}}, K_J\right)\leq\Lambda e^{s\gamma_{|J|}}r_J^s.
 \end{align*}

\end{lem}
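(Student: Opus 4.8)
The plan is to fix $J\in\T^*$, put $\eps\coloneqq e^{-\gamma_{|J|}}$, and bound $\eps^{s}N(\eps,K_J)$ by $\Lambda\,r_J^{s}$ with $\Lambda$ not depending on $J$. The map $\varphi_J$ sends $K=K_\varnothing$ onto $K_J=K_{J\varnothing}$ by \eqref{M:mappings}, and by \eqref{biLip2} taken with the word $\varnothing$ it is $\kappa^{+}_{J,\varnothing}r_J$-Lipschitz on $K_{G\xi}$. Hence \ref{C:lipschitzImage}, applied with $\tau\coloneqq\xi$ and $L\coloneqq\kappa^{+}_{J,\varnothing}r_J$, gives
\begin{equation*}
N\bigl(\kappa^{+}_{J,\varnothing}r_J\,\eta,\,K_J\bigr)\le N(\eta,K)\qquad\text{for every }\eta\le\xi .
\end{equation*}
Choosing $\eta\coloneqq\eps/(\kappa^{+}_{J,\varnothing}r_J)$, provided we already know $\eta\le\xi$, and using $\eps=\kappa^{+}_{J,\varnothing}r_J\,\eta$ together with $\kappa^{+}_{J,\varnothing}\le\kappa$ from \eqref{M:kappa}, this yields
\begin{equation*}
\eps^{s}N(\eps,K_J)\le(\kappa^{+}_{J,\varnothing})^{s}r_J^{s}\cdot\eta^{s}N(\eta,K)\le\kappa^{s}r_J^{s}\cdot\eta^{s}N(\eta,K).
\end{equation*}

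The main technical point — and the only place where the choice of $\gamma_{|J|}$ enters — is checking that $\eta$ stays in a fixed bounded interval independent of $J$; in fact I will show $\eta\le\xi\rho<\xi$. Assume first $|J|>m_0$, so that $\gamma_{|J|}$ was chosen to satisfy \eqref{eq:assumptionOnGamma_m}, which rearranges to $\eps\le\tilde\eps_{|J|}\,\rho^{|J|}(\tau_{|J|}e^{\alpha_{|J|}})^{-1}$. Substituting $\tilde\eps_{|J|}=\xi\rho^{|J|}\kappa^{-2}$ from \eqref{eps_tilde}, $\tau_{|J|}=R^{|J|}\kappa\rho^{-|J|-1}$, and the lower bound $e^{\alpha_{|J|}}\ge\kappa^{-2}$ one obtains
\begin{equation*}
\eps\le\frac{\xi\rho^{3|J|+1}}{\kappa R^{|J|}} .
\end{equation*}
Combining this with $r_J\ge\rho^{|J|}$ (iterate \ref{pet} from $r_\varnothing=1$) and $\kappa^{+}_{J,\varnothing}\ge\kappa^{-1}$ gives
\begin{equation*}
\eta=\frac{\eps}{\kappa^{+}_{J,\varnothing}r_J}\le\frac{\kappa\eps}{\rho^{|J|}}\le\xi\rho\Bigl(\frac{\rho^{2}}{R}\Bigr)^{|J|}\le\xi\rho<\xi ,
\end{equation*}
where at the last step I use $\rho^{2}/R\le\rho<1$: every $I\in\T^*$ has a successor $Ij\in\T^*$, so $\rho r_I\le r_{Ij}\le Rr_I$ forces $\rho\le R<1$. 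Thus $\eta\in(0,\xi\rho]$ for all $J$ with $|J|>m_0$; the finitely many $J$ with $|J|\le m_0$ contribute only finitely many values of $N(e^{-\gamma_{|J|}},K_J)\,e^{-s\gamma_{|J|}}r_J^{-s}$, which are absorbed by enlarging the final constant.

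Finally I would check that $\Theta\coloneqq\sup_{0<t\le\xi\rho}t^{s}N(t,K)$ is finite. As $t\to0+$, \ref{C:comparability} gives $t^{s}N(t,K)\le B^{s+1}(t/B)^{s}\eS(t/B,K)$, whose $\limsup$ is finite by \eqref{sRegularInequalinies}; on any interval $[t_0,\xi\rho]$ with $t_0>0$, \ref{C:monotonicity} and \ref{C:comparability} give $t^{s}N(t,K)\le(\xi\rho)^{s}N(t_0,K)\le(\xi\rho)^{s}B\,\eS(t_0/B,K)<\infty$ since $K$ is compact. Hence $\Theta<\infty$, and from the first paragraph $\eps^{s}N(\eps,K_J)\le\kappa^{s}\Theta\,r_J^{s}$ whenever $|J|>m_0$. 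Taking $\Lambda$ to be $\kappa^{s}\Theta$ together with the finitely many exceptional ratios gives $N(e^{-\gamma_{|J|}},K_J)\le\Lambda e^{s\gamma_{|J|}}r_J^{s}$ for all $J$, as desired. The hard part is exactly the scale estimate of the second paragraph; everything else is a routine application of the structural properties of $N$ and of $s$-regularity.
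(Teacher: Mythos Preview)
Your proof is correct and follows essentially the same strategy as the paper: transfer from $K_J$ to $K$ via the Lipschitz bound on $\varphi_J$, then invoke $s$-regularity to control $t^{s}N(t,K)$ uniformly in $t$. The only difference is cosmetic: the paper first passes to $\eS$ via \ref{C:comparability} (so that $G=0$ and \eqref{packingMI} holds for all scales), and then applies Lemma~\ref{L:differentEpsilons} to compare $\eS$ at the two scales, whereas you apply \ref{C:lipschitzImage} directly to $N$ and bound the supremum $\Theta=\sup_{0<t\le\xi\rho}t^{s}N(t,K)$ instead.
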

\begin{proof}
{\color{black}Similarly to the previous lemma we can reduce the general case to $N=\eS$.}
One can write
          \begin{equation*}
            \begin{aligned}
              N\left(e^{-\gamma_{|J|}}, K_J\right)
              \stackrel{\ref{C:comparability}}{\leq} B \eS\left(\frac{e^{-\gamma_{|J|}}}{B}, K_J\right)
              \stackrel{\eqref{packingMI}}{\leq} B \eS\left(\frac{e^{-\gamma_{|J|}} }{Br_{J}\kappa^+_{J,\varnothing}}, K\right)
              &\stackrel{\eqref{M:kappa}}{\leq}
              B\eS\left(\frac{e^{-\gamma_{|J|}} }{r_{J}\kappa}, K\right).
            \end{aligned}
            \end{equation*}
          Note that by \eqref{eq:lessserThanEps_m} and \eqref{eps_tilde} one has
          $\frac{e^{-\gamma_{|J|}} }{r_{J}\kappa}\leq \frac{\tilde{\eps}_{|J|} }{\rho^{|J|}\kappa} \le \xi$.
          Applying
          Lemma \ref{L:differentEpsilons}
          \begin{align*}
            \eS\left(\frac{e^{-\gamma_{|J|}} }{r_{J}\kappa},K\right)\stackrel{L\,\ref{L:differentEpsilons}}{\leq}
            R\left(\xi \right)\frac{\xi^s r_{J}^s\kappa^s}{e^{-s\gamma_{|J|}}}\eS\left(\xi,
            K\right).
          \end{align*}
          This implies the statement of the lemma.
\end{proof}

From now on, the proof is identical to the one in \cite{Lalley1989}.
For the convenience of the reader, we rewrite the main points here.

	\begin{lem}
        There is $ \Xi >0$ independent of
          $J\in\bigcup_{k>m_0}^\infty\mathcal{T}_k$ and $\eps_{m}$,
          $m\in\en$, such that
          \begin{equation}\label{card_of_dddash}
            \# \De''_{\eps}(J)\leq \eps^{-s} \Xi e^{-s\gamma_{|J|}}\alpha_{|J|}
          \end{equation} 
          for every $0<\eps<\eps_{|J|}$.
	\end{lem}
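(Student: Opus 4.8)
The plan is to show that a word $IJ$ with $|J|=m>m_0$ can belong to $\De''_\eps(J)$ only when the cocycle sum $S_{|I|}f_1(IJ)$ is trapped in a window of length $\alpha_m+O(1)$, which pins $r_{IJ}$ down to within a bounded factor of $r_J\eps e^{\gamma_m}$, and then to deduce the cardinality bound by an Ahlfors-regular packing count in which $\alpha_m$ plays the role of the logarithmic thickness of the transition layer $\De''_\eps(J)$. Throughout put $\theta:=-\log\eps-\gamma_m$, and note $\theta>0$ for $\eps<\eps_m$. If $I\in\De''_\eps(J)=\De_\eps(J)\setminus\De'_\eps(J)$ then necessarily $|I|\ge 2$: for $|I|\le 1$ one has $S_{|I|-1}f_1(IJ)=0<\theta$, so \eqref{uppergammamalt} holds and $I\in\De'_\eps(J)$. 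From the case $k=|I|-1$ of \eqref{uppergammam} we get $W:=S_{|I|-1}f_1\big((IJ)|_{m+|I|-1}\big)\le\theta$, while the failure of \eqref{uppergammamalt} gives $S_{|I|-1}f_1(IJ)>\theta$.

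The crux is to bound $S_{|I|-1}f_1(IJ)-W$ by $\alpha_m$. Write $(IJ)|_{m+|I|-1}=I^-K'$ with $I^-=I|_{|I|-1}$ and $K'=I_{|I|}J_1\cdots J_{m-1}$, so that $|K'|=m$ and, by the very definition of $\alpha_m$, $S_{|I|-1}f_2\big((IJ)|_{m+|I|-1}\big)\le W+\alpha_m$. On the other hand, for each $0\le n\le |I|-2$ the word $\sigma^n\big((IJ)|_{m+|I|-1}\big)$ is a nonempty prefix of $\sigma^n(IJ)$ (the latter being obtained from the former by appending the single symbol $J_m$), so two applications of \eqref{monotonefi} give $f_1(\sigma^n(IJ))\le f_2(\sigma^n(IJ))\le f_2\big(\sigma^n((IJ)|_{m+|I|-1})\big)$; summing over $n$ yields $S_{|I|-1}f_1(IJ)\le S_{|I|-1}f_2\big((IJ)|_{m+|I|-1}\big)\le W+\alpha_m$, hence $\theta<S_{|I|-1}f_1(IJ)\le\theta+\alpha_m$. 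Since $|I_{|I|}J|=m+1>m_0$, \eqref{strictf} gives $f_1(I_{|I|}J)>0$, and \ref{pet} together with \eqref{M:kappa} gives $f_1(I_{|I|}J)\le\log(\kappa/\rho)$; adding $f_1(\sigma^{|I|-1}(IJ))=f_1(I_{|I|}J)$ to the previous inequality, then invoking \eqref{SIf}, $S_{|I|}f_1(IJ)=\log\big(r_J/(r_{IJ}\kappa^+_{I,J})\big)$, together with \eqref{M:kappa} and $\alpha_m\le\log\kappa^2$, we obtain a constant $\lambda_0=\lambda_0(\kappa,\rho)\ge 1$ (independent of $m,J,\eps$) with
\[
\lambda_0^{-1}\,r_J\,\eps\,e^{\gamma_m}\ \le\ r_{IJ}\ \le\ \lambda_0\,r_J\,\eps\,e^{\gamma_m}\qquad\text{for all }I\in\De''_\eps(J),
\]
and, by \eqref{diamKI} and \eqref{eq:oppositeToT2}, the same two-sided estimate for $\diam K_{IJ}$. (Using that already $S_{|I|-1}f_1(IJ)\le\theta$ when $I\in\De'_\eps(J)$, the same computation shows every $I\in\De_\eps(J)$ admits such an estimate.)

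It remains to count. The words $\{IJ:I\in\De''_\eps(J)\}$ are pairwise incomparable — the observation already used for $\De_\eps^m$ and $\De_\eps^{'m}$ — and by the previous step their cells $K_{IJ}$ all have diameter comparable to $r_J\eps e^{\gamma_m}$. At this point I would argue as in \cite{Lalley1989}: a word $L=IJ$ lies in $\De''_{\eps'}(J)$ exactly when $-\log\eps'$ runs through the half-open interval $\big[\gamma_m+W,\ \gamma_m+S_{|I|-1}f_1(IJ)\big)$ with $W=S_{|I|-1}f_1\big((IJ)|_{m+|I|-1}\big)$, sitting between the regime in which $L$ has not yet been reached and the $\De'$-regime of $L$; by the crux this interval has length at most $\alpha_m$. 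Combining this with the Ahlfors-regular count — via $K\subseteq\bigcup_{L\in\De_\eps^m}K_L$, the comparability just proved for all of $\De_\eps(J)$, and Lemmas~\ref{L:differentEpsilons} and~\ref{useful_estimate_stuff}, which together bound the number of suffix-$J$ words appearing per unit increment of $-\log\eps'$ by a multiple of $\eps^{-s}e^{-s\gamma_m}$ uniform in $m$ — and integrating over the window of thickness $\alpha_m$, one arrives at $\#\De''_\eps(J)\le\eps^{-s}\Xi e^{-s\gamma_m}\alpha_m$. The main obstacle is precisely this last count: unlike in the disjoint (self-similar) case, the intervals $\big[\gamma_m+W,\gamma_m+S_{|I|-1}f_1(IJ)\big)$ attached to distinct suffix-$J$ words overlap, so one cannot merely sum their lengths, and must instead estimate the count at the single scale $\eps$ by comparison with the Ahlfors-regular total $\#\De_\eps^m$; this is where Lalley's renewal argument is imported. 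The only genuinely new ingredient is the extraction of the factor $\alpha_{|J|}$ from the overlap condition via \eqref{monotonefi}.
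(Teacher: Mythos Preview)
The paper's own proof is a one-line deferral: ``This is exactly like the proof of \cite[Lemma 13.6]{Lalley1989}.'' Your proposal does substantially more than this, and the setup you provide is correct: the window characterisation $\theta<S_{|I|-1}f_1(IJ)\le\theta+\alpha_m$ via \eqref{monotonefi} and the definition of $\alpha_m$ is exactly right (including the observation that $(IJ)|_{m+|I|-1}=I^-K'$ with $|K'|=m$, which is what makes $\alpha_m$ appear), and the two-sided pinning of $r_{IJ}$ follows as you say. This is precisely the reduction one needs before invoking Lalley's argument, so in that sense your proof is more explicit than the paper's and takes the same route.

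There is, however, a genuine gap in your final counting paragraph, and it is worth naming. You write that Lemmas~\ref{L:differentEpsilons} and~\ref{useful_estimate_stuff} ``bound the number of suffix-$J$ words appearing per unit increment of $-\log\eps'$'' and then propose to integrate over the window of width $\alpha_m$. But those lemmas, together with Ahlfors regularity and incomparability, only bound the number of words \emph{active} at a single scale---they give $\#\De_{\eps'}(J)\le C(\eps')^{-s}e^{-s\gamma_m}$, not a bound on the arrival rate. From this one can extract $\#\{I:W(IJ)\in(\theta-\alpha_m,\theta]\}\le C\eps^{-s}e^{-s\gamma_m}$ when $\alpha_m$ is bounded below by the minimum sojourn time $c=\inf f_1>0$, but not the factor $\alpha_m$ when $\alpha_m\ll c$, which is exactly the regime of interest. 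Obtaining the genuine rate bound (so that the window integral picks up $\alpha_m$ rather than $1$) is where Lalley's renewal estimates enter; you acknowledge this, but the phrasing suggests the cited lemmas alone would suffice, and they do not. In short: your reduction is correct and matches the paper, but the last step is not an elementary packing count---it is the renewal-theoretic core, and both you and the paper import it from \cite{Lalley1989}.
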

	
	\begin{proof}
          This is exactly like the proof of \cite[Lemma
          13.6]{Lalley1989}. 
	\end{proof}

	\begin{proof}[Proof of Lemma~\ref{L:estimateNegligible}]
          For $I\in \De''_{\eps}(J)$, $|J|=m>m_0$, we have
          \begin{align*}
            N\left(\eps e^{S_{|I|}f_1(IJ)}, K_J\right)\stackrel{\ref{C:monotonicity}}{\le} N\left(e^{-\gamma_{|J|}}, K_J\right).
          \end{align*}
         Thus by Lemma \ref{useful_estimate_stuff} we obtain for all $m>m_0$
          \begin{equation*}
            \begin{aligned}
              \eps^sQ_{\eps}^m&\stackrel{{L\,\ref{useful_estimate_stuff}}}{\leq}\eps^s \Lambda e^{s\gamma_{m}}\sum_{\stackrel{I\in \De''_{\eps}(J)}{|J|=m}}r_{J}^s\leq\eps^s  \Lambda e^{s\gamma_{m}}\sum_{|J|=m}r_{J}^s\# \De''_{\eps}(J)\\
              &\stackrel{\eqref{card_of_dddash}}{\leq}
              \eps^s\eps^{-s}\Lambda e^{s\gamma_{m}} \Xi
              e^{-s\gamma_{m}}\alpha_{m}\sum_{|J|=m}r_{J}^s\stackrel{\ref{triAlt}}{\le}\Lambda \Xi
              \alpha_{m}E =:Q_{m}.
            \end{aligned}
          \end{equation*}
          This is enough since by \eqref{alpha_m_to_one} we have that $\alpha_{m}\to 0$ as $m\to\infty$.
          The statements involving $R_{\eps}^m$ follow directly from
          Lemma~\ref{L:estimateOverlaps}.
	\end{proof}

	\begin{proof}[Proof of Lemma~\ref{L:estimateMainTerms}]
		
          Pick $J\in\bigcup_{k>m_0}^\infty\mathcal{T}_k$
          and consider the functions $G_J,\tilde G_J:\er\to\er$
          defined by
          \begin{align}\label{def_of_G_J}
            G_{J}(t)\coloneqq N(e^{t-\gamma_{|J|}}, K_{J})\quad\text{and}\quad\tilde G_{J}(t)\coloneqq G_J(t+\alpha_{|J|})
          \end{align}
          Then for $\eps<\eps_{|J|}$, where $\eps_{|J|}$ is chosen according to \eqref{epsilon_m},
          \begin{align*}
            \sum_{I\in \De'_{\eps}(J)} &N\left(\eps
            e^{S_{|I|}f_1(IJ)}, K_J\right)
            \stackrel{\eqref{IlongEnough}}{=}\sum_{i}\sum_{Ii\in \De'_{\eps}(J)} N\left(\eps e^{S_{|I|+1}f_1(IiJ)}, K_J\right)\\
            &= \sum_{i}\sum_{Ii\in \De'_{\eps}(J)} G_{J}\left(S_{|I|+1}f_1(IiJ)-(-\log(\eps)-\gamma_{|J|})\right)\\
            &= \sum_{i}\sum_{I} G_{J}\left(S_{|I|+1}f_1(IiJ)-(-\log(\eps)-\gamma_{|J|})\right)\chi_{P(iJ)}\left(-\log(\eps)-\gamma_{|J|},I\right)\\
            &= \sum_{i}
               N_{G_{J}}\left(-\log(\eps)-\gamma_{|J|},iJ\right).
          \end{align*}
          Note that 
          and similarly
          \begin{equation}
            \sum_{I\in \De'_{\eps}(J)} N\left(\eps e^{S_{|I|}f_1(IJ)+\alpha_{|J|}}, K_J\right)
            =\sum_{i} N_{\tilde G_{J}}\left(-\log(\eps)-\gamma_{|J|},iJ\right).
          \end{equation}
          Here we denoted
          \begin{align*}
            &P(L)\coloneqq \left\{(t,I): S_{|I|+1}f_1(IL)>t\geq S_{j}f_1(IL),\; j\leq |I|\right\}
          \end{align*}
          and
          \begin{align*}
            N_{G}(a,L)\coloneqq \sum_{I} G\left(S_{|I|+1}f_1(IL)-a\right)\chi_{P(L)}(a,I),
          \end{align*}
          for $G:\R\to\R$,
          like in \eqref{P(L)} and \eqref{N_G(a,L)}.
	Then, using \eqref{strictf}, by Proposition \ref{renewal_theorem} one has for all $m>m_0$
	\begin{align*}
        \eps^sU^m_\eps&= \eps^s\sum_{i,|J|=m}N_{G_{J}}\left(-\log(\eps)-\gamma_{|J|},iJ\right)\\
        &\to \sum_{i,|J|=m}e^{-s\gamma_{|J|}}\int\limits_{0}^{\infty}G_{J}(t)F(iJ,dt)\eqqcolon U_m
	\end{align*}
	and
	\begin{align*}
        \eps^sL^m_\eps&=\sum_{i,|J|=m}N_{\tilde G_{J}}\left(-\log(\eps)-\gamma_{|J|},iJ\right)\\
        &\to \sum_{i,|J|=m}e^{-s\gamma_{|J|}}\int\limits_{0}^{\infty}\tilde G_{J}(t)F(iJ,dt)\eqqcolon L_m
	\end{align*}
        as $\eps\to0$.
        Note that all the integrals above (and so also $U_m$ and $L_m$) are finite, since we integrate a bounded function with respect to a measure with a compact support.
        Next observe, that by \ref{C:monotonicity} one has
        \begin{align*}
          0\le\frac{N(e^{\|f_1\|_\infty+1-\gamma_{|J|}}, K_{J})}{N(e^{-\gamma_{|J|}}, K_{J})}\le1,
        \end{align*}
        thus
        \begin{align*}
          \left|\frac{G_J(\|f_1\|_\infty+1)}{G_J(0)}-\frac{G_J(0)}{G_J(0)}\right|\le 1.
        \end{align*}
       By \cite[Corollary 3.3]{Lalley1989} we obtain that for any $\delta>0$ there exists an $\alpha_*(\delta)>0$ such that for all $0<\alpha<\alpha_*(\delta)$ one has
        \begin{align*}
          \left|\int_0^\infty\frac{G_J(t+\alpha)}{G_J(0)}F(L,dt)-\int_0^\infty\frac{G_J(t)}{G_J(0)}F(L,dt)\right|<\delta
        \end{align*}
        for all $L\in\bigcup_{k>m_0}^\infty\mathcal{T}_k$.
        This can be rephrased as
        \begin{align*}
          \left|\int_0^\infty\frac{\tilde G_J(t)}{G_J(0)}F(L,dt)-\int_0^\infty\frac{G_J(t)}{G_J(0)}F(L,dt)\right|<\delta_{|J|},
        \end{align*}
        for all $L\in\bigcup_{k>m_0}^\infty\mathcal{T}_k$, where $\delta_{|J|}\to0$ as $|J|\to\infty$. 
        This gives
        \begin{align*}
          \left|U_m-L_m\right|&\le e^{-s\gamma_{m}}\delta_m\sum_{i, |J|=m}G_J(0)\\
          &= e^{-s\gamma_{m}}\delta_m\sum_{i, |J|=m}N(e^{-\gamma_{|J|}}, K_{J})\stackrel{L\,\ref{useful_estimate_stuff}}{\le}\delta_m
          \Lambda \sum_{i, |J|=m}r_J^s\stackrel{\ref{triAlt}}{\le} \delta_m\Lambda N E .
        \end{align*}
        Here $N$ is the cardinality of the alphabet of $\T$.
        Hence $ \left|U_m-L_m\right|\to0$ as $m\to\infty$.
	\end{proof}
	
\subsection{Minkowski measurability}\label{S:Minkowski}
The framework in Section~\ref{SS:setup} unfortunately does not cover arguably the most interesting case, which is the question of Minkowski measurability.
In this section we restrict our interest only to the sets $K\subseteq\er^d$.
Recall that the counting function we consider in order to study Minkowski measurability is of the form
\begin{equation*}
\eM(\eps,K)=\frac{|K_\eps|}{\eps^d}.
\end{equation*}
As mentioned in section~\ref{SS:minkowskiMeasurability} we can no longer use \ref{C:lipschitzImage} which needs to be replaced by \ref{C:biLipschitzImage}.
By \eqref{neighborhood_inclusion} we know that
$\varphi_I $ is $(\kappa^{-}_{I,J}\frac{r_{IJ}}{r_J})-(\kappa^{+}_{I,J}\frac{r_{IJ}}{r_J} )$-Lipschitz
on $ (K_J)_{\frac{G\xi r_J}{\kappa  }}$. For $ \eps'\le \frac{\xi r_J}{\kappa  }$, property  \ref{C:biLipschitzImage} yields
\begin{align}\label{intermediate_step_c4'}
\eM(\eps',K_J)\ge \left(\frac{\kappa^{-}_{I,J}}{\kappa^{+}_{I,J}}\right)^d\eM\left(\eps'\kappa^{-}_{I,J}\frac{r_{IJ}}{r_J},K_{IJ}\right).
\end{align}
Now define $ \eps\coloneqq  \eps'\kappa^{-}_{I,J}\frac{r_{IJ}}{r_J}$.
By \eqref{eps_r_IJ_upper_bound} we have then
  \begin{align*}
  \eps' = \eps e^{S_{|I|}f_1(IJ)} \le \tilde \eps_{|J|}=\frac{\xi\rho^{|J|}}{\kappa^2 }\le \frac{\xi r_{|J|}}{\kappa  }.
  \end{align*}
That means equation \eqref{intermediate_step_c4'} can be written as
\begin{align}\label{intermediate_step_c4'_part_2}
\eM\left(\eps,K_{IJ}\right)\le \left(\frac{\kappa^{+}_{I,J}}{\kappa^{-}_{I,J}}\right)^d\eM\left(\eps\frac{r_J}{r_{IJ}\kappa^{-}_{I,J}},K_J\right).
\end{align}
This leads to replacing \eqref{upperExpression} and \eqref{lowerExpression} by
\begin{equation}\label{eq:upperExpressionMM}
\begin{aligned}
\eM(\eps,K)&\leq \sum_{\stackrel{I\in \De_{\eps}(J)}{|J|=m}} \eM(\eps,K_{IJ})
\leq \sum_{\stackrel{I\in \De_{\eps}(J)}{|J|=m}} \left(\frac{\kappa^{+}_{I,J}}{\kappa^{-}_{I,J}}\right)^d \eM\left(\eps\frac{r_J}{r_{IJ}\kappa^{-}_{I,J}},K_J\right)\\
&\leq e^{d\alpha_m}  \sum_{\stackrel{I\in \De_{\eps}(J)}{|J|=m}} \eM\left(\eps e^{S_{|I|}f_2(IJ)},K_J\right)\\
&\leq e^{d\alpha_m}  \sum_{\stackrel{I\in \De_{\eps}(J)}{|J|=m}} \eM\left(\eps e^{S_{|I|}f_1(IJ)},K_J\right)\\
&= e^{d\alpha_m}  \sum_{\stackrel{I\in \De'_{\eps}(J)}{|J|=m}} \eM\left(\eps e^{S_{|I|}f_1(IJ)},K_J\right)
+ e^{d\alpha_m}  \sum_{\stackrel{I\in \De''_{\eps}(J)}{|J|=m}} \eM\left(\eps e^{S_{|I|}f_1(IJ)},K_J\right).
\end{aligned}
\end{equation}

and similarly obtained
\begin{equation}\label{eq:lowerExpressionMM}
\begin{aligned}
\eM(\eps,K)\geq e^{-d\alpha_m}  \sum_{\stackrel{I\in \De_{\eps}(J)}{|J|=m}} \eM\left(e^{S_{|I|}f_1(IJ)},K_J\right)-R_{\eM,\eps}^{m}.
\end{aligned}
\end{equation}
Here we denoted 
\begin{equation*}
R_{\eM,\eps}^{m}\coloneqq \sum_{L\in \De_{\eps}^{'m}} \eM(\eps , K_{L})-\eM(\eps, K).
\end{equation*}

{\color{black}
Next observe that the proofs of Lemma~\ref{L:estimateNegligible} and Lemma~\ref{L:estimateOverlaps} still go through, if we replace \ref{C:lipschitzImage} by \ref{C:biLipschitzImage} and use \ref{intermediate_step_c4'_part_2} (with the corresponding lower estimate) instead of \ref{packingMI}.

Finally, to deal with main terms is now even easier since we have the same term on both sides and therefore it suffices to use the Renewal theorem just once, 
and of course we use the fact that $\alpha_m\to 0$ as $m\to\infty$ to obtain that $e^{\pm d\alpha_m}\to 1$ and $m\to\infty$.}

\section{$\alpha$-almost similar images}\label{S:examples}

\subsection{Setup and main result}
In the previous section we found some conditions on a tree for the limit $\eps^sN(\eps,K)$ to exist.
We are now interested whether this behaviour transfers to images $\Phi(K)$ with some reasonable class of mappings $\Phi$.
We start by stating what the term reasonable mapping might mean in this context.
\begin{definition}
	Let $0<\alpha\leq 1$, $(X,d_X),(Y,d_Y)$ metric spaces and $S\ge 0$.
	A bi-Lipschitz mapping  $\Phi: X\to Y$ is said to be $\alpha$-almost similar with parameter $S$, 
	if there is a constant $A>0$, such that for every compact $\varnothing\neq K\subseteq X$ there exists  a constant $C_K$ satisfying
	\begin{equation}
	C_K \left(1+A\;\diam^\alpha K\right)^{-1}\leq \frac{d_{Y}(\Phi(x),\Phi(y))}{d_{X}(x,y)}\leq C_K \left(1+A\;\diam^\alpha K\right)
	\end{equation}
        for all $x\neq y\in K_ {S \diam K}$.
        
        Note that if $S=0$, the mapping $\Phi$ in the above definition carries only information about the intrinsic structure of $K$. Therefore in that case we can expect conclusions for the behaviour of $\eps \mapsto\eps^sN(\eps,\Phi(K))$ only for $N$ satisfying $G=0$ (cf. the assumptions of Theorem~\ref{MainTheoremImage}).
\end{definition}

\begin{rem}
	Suppose that the constant $C_K$ is chosen optimal for every $K$ and suppose that $\Phi$ is bi-Lipschitz with a constant $L>0$.
	Then 
\begin{equation}\label{eq:C_KestimaledByL}
\frac{1}{L}\leq \frac{1+A\;\diam^\alpha K}{L}\leq C_K\leq \frac{L}{1+A\;\diam^\alpha K}\leq L.
\end{equation}
Suppose additionally that $K$ and $K'$ are two compacts in $X$, then
\begin{equation*}
\frac{C_{K'}}{1+A\;\diam^\alpha {K'}}\leq \frac{C_K}{1+A\;\diam^\alpha K}\leq C_K (1+A\;\diam^\alpha K)\leq C_{K'} (1+A\;\diam^\alpha {K'})
\end{equation*}
provided $K\subseteq {K'}$.
This implies
\begin{equation}\label{eq:boundsC_K}
\frac{1+A\;\diam^\alpha K}{1+A\;\diam^\alpha {K'}}\leq \frac{C_{K'}}{C_K}\leq \frac{1+A\;\diam^\alpha {K'}}{1+A\;\diam^\alpha K}
\end{equation}
 for $K\subseteq {K'}$.
\end{rem}

Suppose that $\Phi:X\to Y$ is $\alpha$-almost similar for some $\alpha\in(0,1]$ 
and  that $\mathrm{T}=(\T,\{x_I\}_{I\in\T^*},\{r_I\}_{I\in\T^*})$ is an $s$-tree in $X$ satisfying conditions \ref{M:finiteType}-\eqref{M:holder/nonlattice} with the corresponding $s$-regular set $K$.

In this section we will carry out the setup from Section~\ref{S:measurability}.
This includes constants $C$, $D$ and $E$ from the definition of the $s$-tree (including assuming only \ref{triAlt} to hold),
constants $A$, $B$, and $G$ corresponding to the counting function $N$.
We will assume that $\tilde N$ is a counting function on $Y$ with the set of corresponding constants $\tilde A$, $\tilde B$, and $\tilde G$.
 
\begin{thm}\label{MainTheoremImage}
	Let $\mathrm{T}$ be an $s$-tree in $X$ satisfying conditions
	\ref{M:finiteType}-\eqref{M:holder/nonlattice} and suppose that $\Phi: X\to Y$ an $\alpha$-almost similar mapping with parameter $S$
	such that if $\widetilde G>0$ then $G,S>0$. 
	Then there exists
	$0<\theta<\infty$ such that
	\begin{equation*}
	\lim_{\eps\to 0+} \eps^s \tilde N(\eps,\Phi(K))=\theta.
	\end{equation*}
\end{thm}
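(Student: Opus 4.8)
The plan is to transfer the whole machinery of Theorem~\ref{MainTheoren} from the $s$-tree $\mathrm{T}$ in $X$ to an $s$-tree $\widetilde{\mathrm{T}}$ in $Y$ built by pushing forward via $\Phi$, and then simply invoke Theorem~\ref{MainTheoren} for $\widetilde N$ on $\Phi(K)$. Concretely, I would set $\widetilde x_I\coloneqq\Phi(x_I)$ and keep the same radii $r_I$ (up to the fixed bi-Lipschitz constant $L$ of $\Phi$, which only rescales $C$, $D$, $F$ in \ref{jedna}, \ref{dva}, \eqref{eq:oppositeToT2} but leaves \ref{tri}/\ref{triAlt}, \ref{ctyri}, \ref{pet} untouched), and use $\widetilde\varphi_I\coloneqq\Phi\circ\varphi_I\circ\Phi^{-1}$ restricted appropriately so that $\widetilde\varphi_I(\widetilde K_J)=\widetilde K_{IJ}$ where $\widetilde K_I=\Phi(K_I)$. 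The $\alpha$-almost similarity is exactly the hypothesis needed to control the distortion of these conjugated maps: on $K_I$ (diameter $\le Dr_I\to0$), the factor $(1+A\diam^\alpha K_I)^{\pm1}\to1$, so the new bi-Lipschitz ``error'' constants $\widetilde\kappa^{\pm}_{i,J}$ differ from $\kappa^{\pm}_{i,J}$ only by a factor tending to $1$ as $|J|\to\infty$. Hence \eqref{M:kappa_to_one} is preserved, and with $\widetilde f_i(iJ)\coloneqq\log\bigl(r_J/(r_{iJ}\widetilde\kappa^{\pm}_{i,J})\bigr)$ the H\"older extensions still exist with the \emph{same} limit function $f$ (the $\rho_\omega$ are unchanged), so \eqref{M:rho_Jcontinuous} and \eqref{M:holder/nonlattice} carry over verbatim; \ref{M:finiteType}, \ref{M:inclusionx_I}, \ref{M:asymptotic} are immediate.

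The one genuinely delicate point is condition~\eqref{M:mappings}: we need a $\widetilde W>0$ and a $\widetilde\delta_0>0$ such that $(\widetilde K_I)_{r_I\widetilde W\widetilde G\delta}\subseteq\widetilde\varphi_I(\widetilde K_{\widetilde G\delta})$ for $\delta\le\widetilde\delta_0$. This is where the hypothesis ``if $\widetilde G>0$ then $G,S>0$'' enters. If $\widetilde G=0$ (e.g. $\widetilde N=\eS$) there is nothing to prove — the neighbourhood is empty — and one simply applies the $G=0$ version of everything. If $\widetilde G>0$, then $G>0$ and $S>0$, so $\varphi_I$ is defined on the genuine neighbourhood $(K)_{G\delta}$ with $(K_I)_{r_IWG\delta}\subseteq\varphi_I(K_{G\delta})$, and the parameter $S>0$ guarantees that $\Phi$ is controlled (via $\alpha$-almost similarity) not just on $K$ but on an $S\diam K$-neighbourhood of each relevant compact; combined with the bi-Lipschitz estimate \eqref{usefulbilipestimate} for $\Phi$ (valid for compact subsets of $\er^d$ by the argument in Section~\ref{SS:BasicNotions} when $Y=\er^d$, or more generally when $\Phi$ is onto), one gets $(\widetilde K_I)_{c\, r_I G\delta}\subseteq\Phi((K_I)_{r_I c' G\delta})\subseteq\Phi(\varphi_I(K_{G\delta}))=\widetilde\varphi_I(\Phi(K_{G\delta}))\subseteq\widetilde\varphi_I((\widetilde K)_{\widetilde G\delta})$ after adjusting $\widetilde\delta_0$ and $\widetilde W$ by the various fixed constants $L$, $L_1$, $L_2$, $S$. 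I expect this bookkeeping of neighbourhood radii — making sure the $\alpha$-distortion factor, the Lipschitz constant of $\Phi$, and the parameter $S$ all line up so that the chain of inclusions closes — to be the main obstacle; everything else is essentially a relabelling.

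Once $\widetilde{\mathrm{T}}$ is shown to be an $s$-tree in $Y$ (it generates $\Phi(K)$ since $\Phi$ is a homeomorphism and $\widetilde x_\omega=\Phi(x_\omega)$ by continuity) satisfying \ref{M:finiteType}--\eqref{M:holder/nonlattice} for the counting function $\widetilde N$ with its constants $\widetilde A,\widetilde B,\widetilde G$, Theorem~\ref{MainTheoren} applied in $Y$ gives directly the existence of $\theta\in(0,\infty)$ with $\lim_{\eps\to0+}\eps^s\widetilde N(\eps,\Phi(K))=\theta$, which is the claim. It is worth noting that one should double-check that the value $s$ is unchanged — but since $\widetilde{\mathrm{T}}$ uses the same radii $r_I$ (up to the bounded factor $L$), condition \ref{triAlt} holds with the same exponent $s$, so Proposition~\ref{bowenformula} (or simply Lemma~\ref{L:sTreeImpliesRegular}) identifies the regularity exponent of $\Phi(K)$ as $s$ again, consistent with $\Phi$ being bi-Lipschitz.
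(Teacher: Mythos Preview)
Your overall plan --- push the tree forward by $\Phi$, set $\widetilde\varphi_I=\Phi\circ\varphi_I\circ\Phi^{-1}$, verify \ref{M:finiteType}--\eqref{M:holder/nonlattice} for the new tree, and invoke Theorem~\ref{MainTheoren} --- is exactly the paper's strategy, and your discussion of \eqref{M:mappings} (the role of the hypothesis ``if $\widetilde G>0$ then $G,S>0$'') is on target.

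There is, however, a real gap in the core step. You propose to keep the radii unchanged, $\widetilde r_I=r_I$, and then assert that ``$\widetilde\kappa^{\pm}_{i,J}$ differs from $\kappa^{\pm}_{i,J}$ only by a factor tending to $1$'' and that ``the $\rho_\omega$ are unchanged, so the limit function is the same $f$''. This is not correct. Decomposing $\widetilde\varphi_i=\Phi\circ\varphi_i\circ\Phi^{-1}$ with the $\alpha$-almost similarity bounds, the Lipschitz constant of $\widetilde\varphi_i$ on $\widetilde\varphi_J(\widetilde K_{\widetilde G\tilde\xi})$ is, up to the small factors $(1+A\diam^\alpha K_{\bullet})^{\pm1}$, equal to
\[
\frac{C_{K_{iJ}}}{C_{K_J}}\,\kappa^{\pm}_{i,J}\,\frac{r_{iJ}}{r_J}.
\]
With your choice $\widetilde r_I=r_I$ this forces $\widetilde\kappa^{\pm}_{i,J}\approx\dfrac{C_{K_{iJ}}}{C_{K_J}}\kappa^{\pm}_{i,J}$. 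The ratio $C_{K_{iJ}}/C_{K_J}$ does \emph{not} tend to $1$: along $J=\omega|_n$ it converges to $C_{i\omega}/C_{\omega}$, where $C_\omega\coloneqq\lim_n C_{K_{\omega|_n}}$ encodes the local dilation of $\Phi$ at $x_\omega$ (in the $\C^{1+\alpha}$ case, $C_\omega=|D\Phi(x_\omega)|$), and this is nonconstant unless $\Phi$ is a global similarity. Hence \eqref{M:kappa_to_one} fails for your tree, and the continuous extension of your $\widetilde f_1$ to $\T$ is $f+g\circ\sigma-g$ with $g(\omega)=\log C_\omega$, not $f$; so \eqref{M:rho_Jcontinuous} fails too.

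The paper's remedy is precisely to absorb these local dilations into the radii: set $\widetilde r_I\coloneqq C_{K_I}\,r_I$. Then the $C$-ratio cancels against $\widetilde r_{iJ}/\widetilde r_J$ and one gets $\widetilde\kappa^{\pm}_{i,J}\le(1+A(DR^{|J|})^\alpha)^2\kappa^{\pm}_{i,J}$, whose infinite product over $I$ is controlled by a geometric series, giving \eqref{M:kappa_to_one}. The price is that now $\widetilde\rho_{i\omega}=\rho_{i\omega}\,C_{i\omega}/C_\omega$, so $\widetilde f\ne f$; but $\widetilde f-f=g\circ\sigma-g$ is a coboundary, so \eqref{M:holder/nonlattice} transfers. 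One must show $g$ is H\"older on $\T\cup\T^*$ (this follows from \eqref{eq:boundsC_K} applied to nested $K_I$'s), and use Lemma~\ref{relabel_trick} beforehand to force $r_I,\rho_\omega,R<L^{-2}$ so that the rescaled ratios $\widetilde r_{Ii}/\widetilde r_I\le L^2R<1$ still satisfy \ref{M:asymptotic}. Your (M4) bookkeeping then goes through essentially as you wrote it.
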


\begin{proof}
        Without any loss of generality we may assume that $\Phi$ is onto $Y$.
	We will construct an $s$-tree $\widetilde{ \mathrm{ T}}\coloneqq(\T,\{\tilde x_I\}_{I\in\T^*},\{\tilde r_I\}_{I\in\T^*})$ in $Y$ corresponding to $\Phi(K)$.
        The statement follows then from Theorem \ref{MainTheoren}.
	First define $\tilde x_I\coloneqq \Phi(x_I)$, $I\in\T^*$.
	Next let the constants $C_K$ be optimal and let $L$ be the bi-Lipschitz constant of $\Phi$.
	By Lemma \ref{relabel_trick} we will assume that 
	\begin{equation}\label{eq:lesserThanL}
	r_I,\rho_\omega, R<\frac{1}{L^2},\quad I\in\T^*,\;\omega\in\T.
	\end{equation}
	For $I\in\T^* $ put $C_I\coloneqq C_{K_I}$ and define $\tilde r_I\coloneqq C_Ir_I$.
	Now observe that with this choice, $\widetilde{ \mathrm{ T}}$ is an $s$-tree.
	Indeed, using \eqref{eq:C_KestimaledByL} we obtain that \ref{jedna} holds with $\tilde C\coloneqq \frac{C}{L^2}$, 
	\ref{dva} holds with $\tilde D\coloneqq L^2D$,
	\ref{triAlt} holds with $\tilde E\coloneqq L^{2s}E$
	and \ref{pet} holds with $\tilde \rho\coloneqq \frac{\rho}{L^2}$, validity
        of condition \ref{ctyri} is clear.  
        The tree $\widetilde{ \mathrm{ T}}$ generates a set $\widetilde{K}$;
        note that $ \tilde K =\Phi(K)$. To prove this suppose $ x\in K $, then there is an $ \omega $ such that $ x=x_\omega=\lim_nx_{\omega|_n} $, thus $ \lim_n\tilde x_{\omega|_n} = \lim_n\Phi(x_{\omega|_n})=\Phi(x)\in \tilde K $. 
        This proves that $ \tilde K \supseteq\Phi(K)$. The opposite inclusion can be proven similarly, in fact the same argument proves also $\widetilde{K}_J=\Phi(K_J)$, $J\in\T^*$.
      
      In the next step we will
        prove that $\widetilde{ \mathrm{ T}}$ satisfies also conditions
        \ref{M:finiteType}-\eqref{M:holder/nonlattice}.
        
 Conditions \ref{M:finiteType} and \ref{M:inclusionx_I} are immediate.  Moreover, \eqref{eq:C_KestimaledByL} and \eqref{eq:lesserThanL} allow us to write
 \begin{equation*}
 \frac{\tilde r_{Ii}}{\tilde r_I}=\frac{C_{Ii}}{C_I}\frac{r_{Ii}}{r_I}\leq L^2R\eqqcolon \tilde R<1
 \end{equation*}
 which is enough to prove \ref{M:asymptotic}.
	
        Next recall \eqref{usefulbilipestimate} and note that as $\Phi$ is bi-Lipschitz with constant $L$, we have
        \begin{align}\label{eq:1}
          \Phi(M)_{\frac{\delta}{L}}\subseteq \Phi(M_\delta)\subseteq \Phi(M)_{L\delta}
        \end{align}
        for all $\delta>0$ and $M\subseteq X$.
        In the same way
        \begin{align}\label{eq:2}
          \Phi^{-1}(M)_{\frac{\delta}{L}}\subseteq \Phi^{-1}(M_\delta)\subseteq \Phi^{-1}(M)_{L\delta}
        \end{align}
        for all $\delta>0$ and $M\subseteq Y$ hold.
	For the condition \eqref{M:mappings} we consider the bi-Lipschitz mappings
	\begin{equation}
	\psi_i\coloneqq\Phi\circ\varphi_i\circ\Phi^{-1}:Y\to Y.
	\end{equation}
        That implies $\psi_J=\Phi\circ \varphi_J\circ\Phi^{-1}$, and by $\widetilde{K}_J=\Phi(K_J)$ we obtain $\psi_I(\widetilde{K}_J)=\psi_I(\Phi(K_J))=\Phi(\varphi_I(K_J))=\Phi(K_{IJ})=\widetilde{K}_{IJ}$ for all $I,J\in\mathcal{T}^*$.
        
        {\color{black}To prove the last part of \eqref{M:mappings} consider $\widetilde W\coloneqq\frac{W}{L^2}$.
        If $\widetilde G=0$ there is nothing to prove and we can pick any $\widetilde\delta_0>0$.
        In the case $\widetilde G>0$ we have $G>0$ and we can put  $\widetilde\delta_0\coloneqq\frac{\delta_0 GL}{\widetilde G}>0$.
        Then we can write
        \begin{equation*}
        \begin{aligned}
        \psi_I\left(\widetilde{K}_{\widetilde G\delta}\right)&=\Phi\circ \varphi_I\circ\Phi^{-1}\left(\widetilde{K}_{\widetilde G\delta}\right)
        \stackrel{\eqref{eq:1}}{\supseteq}\Phi\circ \varphi_I\left({K}_{\frac{\widetilde G\delta}{L}}\right)\\
        &\stackrel{\eqref{M:mappings}}{\supseteq} \Phi\left(\left({K_I}\right)_{r_I\frac{W\widetilde G\delta}{L}}\right)
        \stackrel{\eqref{eq:1}}{\supseteq} \left(\Phi({K}_I)\right)_{r_I\frac{W\widetilde G\delta}{L^2}}
        =\left(\widetilde{K}_I\right)_{r_I\widetilde W\widetilde G\delta}
        \end{aligned}
        \end{equation*}
        whenever $\delta\leq\widetilde\delta_0$.}
    
        Pick some $\tilde{\xi}>0$ such that
        \begin{equation}\label{eq:choiceOfEta}
        \widetilde G\tilde{\xi}\leq\min\left(\frac{G\xi}{L},\frac{SF }{L\kappa}\right)
        \end{equation}
        where $F$ is the constant from \eqref{eq:oppositeToT2}. Note that this is possible due to the assumptions on $\tilde G$, $G$, and $S$.
        Pick $i$ and $J$ such that $iJ\in\T^*$.
        
         First, by (\ref{eq:1}) one has
        \begin{align*}
          \psi_J(\widetilde{K}_{\widetilde G\tilde{\xi}})=\psi_J(\Phi(K)_{\widetilde G\tilde{\xi}})=\Phi\circ \varphi_J\circ\Phi^{-1}(\Phi(K)_{\widetilde G\tilde{\xi}})\subseteq \Phi\circ\varphi_J(K_{\widetilde GL\tilde{\xi}}).
        \end{align*}
        By (\ref{biLip2}) the mapping $\varphi_J$ is $\kappa^+_{J,\varnothing}r_{J}$-Lipschitz.
        Hence 
         using (\ref{M:kappa}) we obtain
        \begin{align*}
          \varphi_J(K_{\widetilde GL\tilde{\xi}})\subseteq \varphi_J(K)_{\widetilde GL\kappa r_J\tilde{\xi}},
        \end{align*}
        if $K_{\widetilde GL\tilde{\xi}}\subseteq K_{G\xi}$, that is, if the condition
        \begin{align*}
          \widetilde G\tilde{\xi}\le\frac{G\xi}{L}       
        \end{align*}
        holds. But this is true by \eqref{eq:choiceOfEta}.
        Thus for all $x,y\in  \psi_J(\widetilde{K}_{\widetilde G\tilde{\xi}})$ one has
        \begin{align*}
          \Phi^{-1}(x),\Phi^{-1}(y)\in\varphi_J(K_{\widetilde GL\tilde{\xi}})\subseteq \varphi_J(K)_{\widetilde GL\kappa r_J \tilde{\xi}}=(K_J)_{\widetilde GL\kappa r_J\tilde{\xi}}\subseteq (K_J)_{S\diam K_J},
        \end{align*}
        if the condition
        \begin{align*}
         \widetilde G \tilde{\xi}\le \frac{S}{L\kappa}\frac{\diam K_J}{r_Jr}
        \end{align*}
        is satisfied, but this follows from \eqref{eq:oppositeToT2} and \eqref{eq:choiceOfEta}.
        Furthermore, we obtain for all  $x,y\in  \psi_J(\widetilde{K}_{\widetilde G\tilde{\xi}})$ in the same way as above
        \begin{align*}
          \varphi_{i}(\Phi^{-1}(x)),\varphi_{i}(\Phi^{-1}(y))&\in \varphi_i\circ\varphi_J(K_{\widetilde GL\tilde{\xi}})=\varphi_{iJ}(K_{\widetilde GL\tilde{\xi}})\\
          &\subseteq \varphi_{iJ}(K)_{\widetilde GL\kappa r_{iJ}\tilde{\xi}}\subseteq (K_{iJ})_{S\diam K_{iJ}}.
        \end{align*}
        To summarize, one has for all $x,y\in  \psi_J(\widetilde{K}_{\widetilde G\tilde{\xi}})$
        \begin{itemize}
        \item[1.] $x,y\in\Phi((K_J)_{S\diam K_J})$, which leads to the Lipschitz constant $C_{K_J}^{-1}(1+A\;\diam^\alpha K_J)$;
        \item[2.] $\Phi^{-1}(x),\Phi^{-1}(y)\in\varphi_J(K_{ G\xi})$, which leads to the Lipschitz constant $\kappa_{i,J}^+\frac{r_{i,J}}{r_J}$;
          \item[3.] $\varphi_i(\Phi^{-1}(x)),\varphi_i(\Phi^{-1}(y))\in(K_{iJ})_{S\diam K_{iJ}}$, which leads to the Lipschitz constant $C_{K_{ij}}(1+A\;\diam^\alpha K_{iJ})$.
        \end{itemize}
    
	To prove \eqref{M:kappa_to_one} define first $\widetilde \kappa_{i,J}^{\pm}$ as optimal constants in \eqref{biLip1} for $\psi_{i}$ on $\psi_J(\widetilde K_{\widetilde G\tilde{\xi}})$.
	Then with the above observation
	\begin{equation*}
	\begin{aligned}
	&d_{Y}(\psi_i(x),\psi_i(y))=d_{Y}(\Phi\circ\varphi_i\circ\Phi^{-1}(x),\Phi\circ\varphi_i\circ\Phi^{-1}(y))\\
	&\leq \frac{C_{K_{iJ}}}{C_{K_{J}}}\left(1+A\;\diam^\alpha K_{iJ}\right)\left(1+A\;\diam^\alpha K_{J}\right) \frac{r_{iJ}}{r_J} \kappa^{+}_{i,J}  d_{X}(x,y)\\
        &\leq \frac{C_{K_{iJ}}}{C_{K_{J}}}\left(1+A\;(DR^{|iJ|})^\alpha\right)\left(1+A\;(DR^{|J|})^\alpha\right) \frac{r_{iJ}}{r_J} \kappa^{+}_{i,J}  d_{X}(x,y)
	\end{aligned}
	\end{equation*}
        for $x,y\in\psi_J(\widetilde K_{\widetilde G\tilde{\xi}})$.
        Hence one has
        \begin{align*}
        \widetilde \kappa_{i,J}^{+}\frac{\tilde r_{iJ}}{\tilde r_J}\le\frac{C_{K_{iJ}}}{C_{K_{J}}}\frac{r_{iJ}}{r_J}\left(1+A\;(DR^{|J|})^\alpha\right)^2\kappa^{+}_{i,J},
        \end{align*}
        thus
        \begin{align}\label{eq:tildeKappaBound}
          \widetilde \kappa_{i,J}^{+}\le\left(1+A\;(DR^{|J|})^\alpha\right)^2\kappa^{+}_{i,J}.
        \end{align}
        Now, by $0<R<1$ and using $\log t\le t-1$ for $t>0$, we have for all $I,J\in \mathcal{T}^*$ that
        \begin{align*}
        \widetilde \kappa_{I,J}^{+}=\prod\limits_{n=1}^{|I|} \widetilde \kappa^{+}_{I_{n},\sigma^n(I)J}
        &\le\prod\limits_{n=1}^{|I|}  \kappa^{+}_{I_{n},\sigma^n(I)J} \left(1+A\;(DR^{|\sigma^n IJ|})^\alpha\right)^2\\
        &=  \kappa^{+}_{I,J} \prod\limits_{n=1}^{|I|}\left(1+A\;D^\alpha (R^\alpha)^{|I|-n+|J|}\right)^2\\
        &\le  \kappa^{+}_{I,J} \prod\limits_{n=0}^{\infty}\left(1+A\;D^\alpha (R^\alpha)^{n+|J|}\right)^2\\            
        &=\kappa^{+}_{I,J}\exp\left( 2\log \prod\limits_{n=0}^{\infty}\left(1 + A\;D^\alpha (R^\alpha)^{n+|J|}\right)\right)\\
        &\le\kappa^{+}_{I,J}\exp\left( 2A\;D^\alpha \sum\limits_{n=0}^{\infty} (R^\alpha)^{n+|J|}\right)\\        
        &\le\kappa^{+}_{I,J}\exp\left( \frac{2A\;D^\alpha(R^\alpha)^{|J|}}{1 -R^\alpha}\right).
        \end{align*}
        Using \eqref{M:kappa_to_one} for $\kappa_{I,J}^{+}$, the last term converges to $1$ uniformly in $I$, as $|J|\to \infty$.
        That means $\lim_{|J|\to\infty}\widetilde \kappa_{I,J}^{+}\le 1$.
        In the same manner it follows, that $\lim_{|J|\to\infty}\widetilde \kappa_{I,J}^{-}\ge 1$.
        As $\widetilde \kappa_{I,J}^{-}\le \widetilde \kappa_{I,J}^{+}$, we have that $\lim_{|J|\to\infty}\widetilde \kappa_{I,J}^{\pm}=1$.
	
	Applying \eqref{eq:boundsC_K} to $K'\coloneqq K_I$ and $K\coloneqq K_{IJ}$ and using \ref{dva}, \ref{M:asymptotic}, \ref{pet} and ~(\ref{eq:oppositeToT2}), we obtain that there are constants $0<q,Q<\infty$ such that
		\begin{equation}\label{eq:boundsC_I}
		\frac{1+q(\rho^{\alpha})^{|I|+|J|}}{1+Q(R^\alpha)^{|I|}}\leq\frac{1+qr_{IJ}^\alpha}{1+Qr_I^\alpha}\leq \frac{C_{I}}{C_{IJ}}\leq\frac{1+Qr_I^\alpha}{1+qr_{IJ}^\alpha}\leq\frac{1+Q(R^\alpha)^{|I|}}{1+q(\rho^{\alpha})^{|I|+|J|}}.
		\end{equation}
	
	For $\omega\in\T$ define $C_\omega\coloneqq\lim_{n\to\infty} C_{\omega|_n}$.
	We show, that the limit always exists by \eqref{eq:boundsC_I}.
	Indeed, one can rewrite it into 
	\begin{equation*}\label{eq:boundsC_Iminusone}
		\frac{q(\rho^{\alpha})^{|I|+|J|}-Q(R^\alpha)^{|I|}}{1+Q(R^\alpha)^{|I|}}C_{IJ}\leq\left(\frac{C_{I}}{C_{IJ}}-1\right)C_{IJ}\leq\frac{Q(R^\alpha)^{|I|}-q(\rho^{\alpha})^{|I|+|J|}}{1+q(\rho^{\alpha})^{|I|+|J|}}C_{IJ}.
	\end{equation*}
	Since $C_I$ are bounded for all $I\in\T^*$, both left hand side and right hand side converge to $0$ as $|I|\to\infty$ uniformly in $J$.
        Using
	\begin{equation*}
	\left(\frac{C_{I}}{C_{IJ}}-1\right)C_{IJ}=C_I-C_{IJ},
	\end{equation*}
	it follows that $\left(C_{\omega|_n}\right)_{n\ge 0}$ is a Cauchy sequence for each $\omega\in\T$.
        
	Next, observe that \eqref{eq:boundsC_I} also implies
	\begin{equation}\label{eq:boundsC_omega}
		\frac{1}{1+Q(R^\alpha)^{|I|}}\leq \frac{C_{I}}{C_{I\omega}}\leq 1+Q(R^\alpha)^{|I|}
	\end{equation}
	for $I\omega\in\widetilde{\T}\coloneqq\T\cup\T^* $, hence
	\begin{equation}\label{eq:boundsC_omegaTau}
		\frac{1}{\left(1+Q(R^\alpha)^{|I|}\right)^2}\leq \frac{C_{I\tau}}{C_{I\omega}}\leq\left(1+Q(R^\alpha)^{|I|}\right)^2
	\end{equation}
	for $I\omega,I\tau\in\widetilde{\T}$.
	Define a function $\tilde g$ on $\widetilde{\T}$ by $\tilde g(\omega)=\log(C_\omega)$ for $\omega\in\T$ and $\tilde g(I)=\log(C_I)$ for $I\in\T^*$.
        Let $g=\tilde g|_{\T}$.
	We will prove that $\tilde g$ is in fact $\alpha$-H\"older (and so $\tilde g$ is in particular continuous).
	Indeed, applying $\log$ on both sides of both \eqref{eq:boundsC_omega} and \eqref{eq:boundsC_omegaTau} we obtain that there is a constant $0<W<\infty$ such that
	\begin{equation*}
	\left|\tilde g(I\tau)-\tilde g(I\omega)\right|\leq W(R^\alpha)^{|I|}.
	\end{equation*}
	whenever $I\omega,I\tau\in\widetilde{\T}$.
	
	For condition (\ref{M:rho_Jcontinuous}) first observe that, as $\tilde g$ is continuous, also $\omega\mapsto C_\omega$ is continuous.
        Furthermore, for all $\omega\in\T$ one has
	\begin{equation*}
	\frac{\tilde r_{i\omega|_n}}{\tilde r_{\omega|_n}}\to \rho_{i\omega}\frac{C_{i\omega}}{C_\omega}\eqqcolon\tilde{\rho_{i\omega}}
	\end{equation*}
        as $n\to \infty$.
	Also by \eqref{eq:lesserThanL} we obtain that $\tilde\rho_{\omega}\in(0,1)$ for every $\omega\in\T$.
	This gives (\ref{M:rho_Jcontinuous}), here $\omega \to \tilde\rho_{\omega}$ is continuous since both $\omega \to \rho_{\omega}$ and $\omega\to C_\omega$ are.
	The existence of H\"older extension $\tilde f(\omega)=\log\left(\frac{1}{\tilde\rho_\omega}\right)$ is immediate from the H\"olderness of the function $\tilde g$.
	For (\ref{M:holder/nonlattice}), we prove that $\tilde f$ is cohomologous to $f$.
	To do this we can write
	\begin{equation*}
	\begin{aligned}
	f(\omega)-\tilde f(\omega)&=\log\left(\frac{1}{\rho_\omega}\right)-\log\left(\frac{1}{\tilde\rho_\omega}\right)\\
	&=\log\left(\frac{1}{\rho_\omega}\right)-\log\left(\frac{1}{\rho_\omega}\right)+\log(C_{\sigma\omega})-\log(C_\omega)\\
	&=(g\circ\sigma - g)(\omega).
	\end{aligned}
	\end{equation*}
	This completes the proof.
\end{proof}

\subsection{$\C^{1+\alpha}$ images of self-similar sets}
Mappings that are $\alpha$-almost similar are closely related to conformal $\C^{1+\alpha}$ diffeomorphisms
(see the statement of the following proposition for the exact definition). 

\begin{prop}\label{P:C1,qtoqSmooth}
	Let $M\subseteq \er^d$ be an open set and $\Phi:M\to\er^d$ a conformal $\C^{1+\alpha}$ diffeomorphism,
	that is, $\Phi, \Phi^{-1}$ are $\C^{1}$ with $\alpha$-H\"older continuous derivative,
	and the Jacobian matrix of $D\Phi(x)$ is a scalar times a rotation matrix for each $x\in M$.
	Fix a compact subset $\varnothing\neq F\subseteq M$.
	Then there is an $S>0$ such that $\Phi|_F$ is an $\alpha$-almost similarity with parameter $S$. 
\end{prop}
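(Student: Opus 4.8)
The plan is to exploit that a conformal $\C^{1+\alpha}$ diffeomorphism looks, on small scales, like a similarity whose ratio is the scalar $c(z):=\|D\Phi(z)\|$ occurring in $D\Phi(z)$; this $c$ is bounded away from $0$ and $\infty$ on any compact subset of $M$, and, being the norm of an $\alpha$-H\"older matrix field, it varies by only $O(\diam^\alpha K)$ across a set $K$ of small diameter. First I would record that $\Phi|_F$ is bi-Lipschitz, which is needed before one can even speak of $\alpha$-almost similarity: cover $F$ by finitely many balls with closures in $M$ on which $\Phi$ is bi-Lipschitz, use a Lebesgue number of this cover to control pairs $x,y\in F$ with $|x-y|$ small, and use compactness of $\{(x,y)\in F\times F:|x-y|\ge\delta\}$ together with the injectivity of $\Phi$ to control the remaining pairs; let $L\ge 1$ be the resulting bi-Lipschitz constant. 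Then fix a compact neighbourhood $F'$ of $F$ with $F'\subseteq M$ (say $F':=F_\eta$ with $0<\eta<\dist(F,\er^d\setminus M)$, or $F':=F_1$ when $M=\er^d$), write $D\Phi=c(\cdot)R(\cdot)$ with $R(\cdot)$ orthogonal and $0<c_-\le c\le c_+<\infty$ on $F'$, and let $H$ be an $\alpha$-H\"older constant for $D\Phi$ on $F'$.

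Now fix any $S>0$ (e.g.\ $S:=1$) and pick a threshold $\rho_0>0$ so small that $(1+3S)\rho_0\le\eta$ and $H(2+3S)^\alpha\rho_0^\alpha<c_-/2$. For a compact $K\subseteq F$ with $\diam K<\rho_0$, choose an arbitrary $z_K\in K$ and set $C_K:=c(z_K)\in[c_-,c_+]$. If $x\ne y$ lie in the parallel set $K_{S\diam K}$ of $K$ taken inside $F$, then every point $w$ of the segment $[x,y]$ satisfies $\dist(w,K)\le(1+3S)\diam K$, whence $[x,y]\subseteq F'$ and $|w-z_K|\le(2+3S)\diam K$ there. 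Writing $\Phi(x)-\Phi(y)=\int_0^1 D\Phi((1-t)y+tx)(x-y)\,dt$, comparing with $D\Phi(z_K)(x-y)$ — whose Euclidean length is exactly $c(z_K)|x-y|$ by conformality — and invoking the $\alpha$-H\"older bound gives
\[
\Bigl|\tfrac{|\Phi(x)-\Phi(y)|}{|x-y|}-c(z_K)\Bigr|\ \le\ H(2+3S)^\alpha\,\diam^\alpha K .
\]
A short elementary computation (using $\diam K<\rho_0$ for the lower inequality) then shows that this forces $C_K(1+A\,\diam^\alpha K)^{-1}\le |\Phi(x)-\Phi(y)|/|x-y|\le C_K(1+A\,\diam^\alpha K)$ as soon as $A\ge 2H(2+3S)^\alpha/c_-$.

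For a compact $K\subseteq F$ with $\diam K\ge\rho_0$ there is nothing subtle: put $C_K:=1$; then for $x,y\in K_{S\diam K}\subseteq F$ the bi-Lipschitz estimate yields $L^{-1}\le |\Phi(x)-\Phi(y)|/|x-y|\le L$, so the two required inequalities hold whenever $1+A\rho_0^\alpha\ge L$. Taking $A:=\max\bigl(2H(2+3S)^\alpha/c_-,\ (L-1)/\rho_0^\alpha\bigr)$ — and noting that the case $\#K=1$ is vacuous — then exhibits $\Phi|_F$ as $\alpha$-almost similar with parameter $S$, which is the assertion. I expect the only genuinely delicate point to be the geometric bookkeeping in the small-diameter regime: keeping the segment $[x,y]$ inside $F'$ and bounding $|(1-t)y+tx-z_K|$ by a fixed multiple of $\diam K$, so that one and the same $A$ works for every compact $K$. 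Conformality is used essentially precisely here, since it is what allows $|\Phi(x)-\Phi(y)|$ to be compared with a scalar multiple of $|x-y|$ rather than with a direction-dependent expression, while the H\"older regularity of $D\Phi$ supplies the quantitative $\diam^\alpha K$-rate.
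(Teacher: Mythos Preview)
Your argument is correct and in fact somewhat more streamlined than the paper's. Both proofs share the same global architecture: show $\Phi|_F$ is bi-Lipschitz, split compacts $K\subseteq F$ into a small-diameter regime (handled by differential calculus plus the H\"older bound) and a large-diameter regime (handled by the bi-Lipschitz constant alone), and choose $A$ as the maximum of the two resulting constants. The difference lies in how the small-diameter estimate is obtained. You write $\Phi(x)-\Phi(y)=\int_0^1 D\Phi((1-t)y+tx)(x-y)\,dt$, compare directly with $D\Phi(z_K)(x-y)$ (whose norm is $c(z_K)|x-y|$ by conformality), and bound the discrepancy via the H\"older constant of $D\Phi$; this yields the two-sided inequality in one stroke. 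The paper instead applies the one-dimensional mean value theorem to the scalar function $z\mapsto|\Phi(z)-\Phi(x)|$ to get the upper bound, then runs the same argument for $\Phi^{-1}$ and uses conformality (via $|\partial_w\Phi^{-1}(\Phi(x))|=|\partial_v\Phi(x)|^{-1}$) to convert this into the lower bound. Your route avoids the pass through $\Phi^{-1}$ and hence does not use the H\"older continuity of $D\Phi^{-1}$; the paper's route is slightly less direct but makes the role of conformality in matching the upper and lower scalars more explicit. Either way the bookkeeping you flag --- keeping the segment $[x,y]$ inside a fixed compact neighbourhood of $F$ and bounding $|(1-t)y+tx-z_K|$ by a universal multiple of $\diam K$ --- is the only subtlety, and you handle it correctly.
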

\begin{proof}
	As $\Phi$ is a diffeomorphism and $F$ is compact, we can pick an $S>0$ small enough, such that $F_{S\,\diam F}\subseteq M$
	and $\Phi(F)_{S\,\diam F}\subseteq \Phi(M)$.
	Since $\Phi$ is locally bi-Lipschitz, there is now an $0<L<\infty$ such that
	\begin{equation*}
	\frac{1}{L} \le \frac{|\Phi(x)-\Phi(y)|}{|x-y|}\le L 
	\end{equation*}
	for all $x,y\in F_{S\,\diam F}$.
	Now let $\varnothing\neq K\subseteq F$ be compact.
	Without any restriction we may assume that $\diam F>0$.
	
	Set $\eps\coloneqq S\,\diam F$.
	Then $\eps>0$.
	For $x\in M$, define $f_x:M\to\R$, $z\mapsto \left|\Phi(z)-\Phi(x)\right|$.
	Then
	\[
	 D  f_x(z) = \frac{\Phi(z)-\Phi(x)}{\left|\Phi(z)-\Phi(x)\right|}  \frac{\partial(\Phi_1(z),\dots,\Phi_d(z))}{\partial(z_1,\dots,z_d)}
	\]
	for all $ z\in M\smallsetminus\{x\}$,
	where $(\frac{\partial \Phi_i}{\partial z_j})_{i,j=1}^d$ denotes the Jacobian matrix with respect to the standard coordinates in $\er^d$.
	Again, we may only consider compact $K\subseteq F$ such that $\diam K>0$.
        Fix $x\in F\cap K_{S\,\diam K}$.
	Then $y\in B(x,\eps)\subseteq M$ for all $y\in F\cap K_{S\,\diam K}$.
	Now fix $x\neq y\in F\cap K_{S\,\diam K}$ and set $v\coloneqq y-x/\left| y-x\right|$.
	By the mean value theorem, there exists a $c\in [0,1]$ such that
	\begin{align*}
	\frac{\left|\Phi(y)-\Phi(x)\right|}{\left|y-x\right|}&=\frac{f_x(y)-f_x(x)}{\left|y-x\right|}=\langle D  f_x(cx+(1-c)y),v\rangle\\
	&=\left\langle\frac{\Phi(z)-\Phi(x)}{\left|\Phi(z)-\Phi(x)\right|},\partial_v\Phi(cx+(1-c)y)\right\rangle\\
	&\le \left|\partial_v\Phi(cx+(1-c)y)\right|.
	\end{align*}
	Here in the last step we used the Cauchy-Schwarz inequality.
	Denote $\xi\coloneqq cx+(1-c)y$ and recall, that $D \Phi$ and $D \Phi^{-1}$ are $\alpha$-H\"older with some H\"older constants $H(\Phi)$ and $H(\Phi^{-1})$.
	Define 
	\begin{equation*}
	H\coloneqq \max(H(\Phi),H(\Phi^{-1}),\quad P\coloneqq \inf_{\left|v\right|=1, \xi\in F}\left|\partial_v\Phi(\xi)\right|,
	\quad Q\coloneqq \sup_{\left|v\right|=1, \xi\in F}\left|\partial_v\Phi(\xi)\right|.
	\end{equation*}\
	Note, as $D \Phi\in\C^1$, one has $0<P\le Q<\infty$.
	Hence
	\begin{align*}
	\frac{\left|\Phi(y)-\Phi(x)\right|}{\left|y-x\right|}&\le  \left|\partial_v\Phi(\xi)\right|\le  \left|\partial_v\Phi(x)\right|+\left|\partial_v\Phi(\xi)-\partial_v\Phi(x)\right|\\
	&\le  \left|\partial_v\Phi(x)\right|+H  \left|\xi-x\right|^\alpha\\
	&\le  \left|\partial_v\Phi(x)\right| \left( 1+\frac{H}{ \left|\partial_v\Phi(x)\right|}\diam^\alpha (K_{S\,\diam K})\right)\\
	&\le  \left|\partial_v\Phi(x)\right| \left( 1+HP^{-1}(1+2S)^\alpha\diam^\alpha K\right) 
	\end{align*}
	for all compact $ K\subseteq F$ such that $0<\diam K$,
        and for all $x,y\in F\cap  K_{S\,\diam K}$.
	Define $B\coloneqq(1+2S)^\alpha$ and assume $\diam K_{S\,\diam K}<\frac{\eps}{L}$.
	Then $\diam\Phi(K_{S\,\diam K})\le L \,\diam K_{S\,\diam K}<\eps$.
	As  $\Phi(F)_\eps\subseteq \Phi(M)$, we obtain in the same way as above
	\begin{align*}
	\frac{\left|\Phi^{-1}(s)-\Phi^{-1}(r)\right|}{\left|r-s\right|}\le  \left|\partial_w\Phi^{-1}(r)\right| \left( 1+\frac{H}{ \left|\partial_w\Phi^{-1}(r)\right|}L^\alpha B \diam^\alpha K\right),
	\end{align*}
	where $x,y\in F\cap  K_{S\,\diam K}$, $r\coloneqq \Phi(x)$, $s\coloneqq\Phi(y)$ and $w\coloneqq s-r/\left|s-r\right|$.
	Note, that by conformality of $\Phi$, there exist for each $x\in M$ some $\lambda_x \neq 0$ and rotation matrix $O_x \in\er^{d\times d}$ such that
	$\partial_u\Phi(x)=\lambda_x \,O_x u$ for all $u\in\er^d$.
	Thus $\left|\partial_u\Phi(x)\right|=\left|\lambda_x \right|$ for all $u\in\er^d$ satisfying $\left|u\right|=1$.
	Using the inverse function theorem, this means
	\begin{align*}
	\left|\partial_w\Phi^{-1}(r)\right|=\left|\partial_v  \Phi(x)\right|^{-1}=\left|\lambda_x\right|^{-1},
	\end{align*}
	where $v=y-x/\left| y-x\right|$.
	Thus
	\begin{align*}
	\frac{\left|\Phi(y)-\Phi(x)\right|}{\left|y-x\right|}&= \frac{\left|s-r\right|}{\left|\Phi^{-1}(r)-\Phi^{-1}(s)\right|}\ge\left|\partial_v  \Phi(x) \right|
	(1+H \left|\partial_v  \Phi(x) \right|L^\alpha B \diam^\alpha K)^{-1}\\
	&\ge \left|\partial_v  \Phi(x) \right|  (1+H Q L^{\alpha} B \diam^\alpha K)^{-1}.
	\end{align*}
	
	Therefore we obtain for all compact $ K\subseteq F$ such that $\diam K_{S\,\diam K}<\frac{\eps}{L}$ and all $x\neq y\in F\cap  K_{S\,\diam K}$
	\begin{align*}
	\left|\lambda_x \right|  (1+H Q L^{\alpha} B \diam^\alpha K)^{-1}\le \frac{\left|\Phi(y)-\Phi(x)\right|}{\left|y-x\right|}\le \left|\lambda_x \right|  (1+H P^{-1} B \diam^\alpha K).
	\end{align*}
	Define
	\begin{align*}
	\gamma\coloneqq\inf\left\{\diam K:K\subseteq F\textrm{ compact, }\diam K_{S\,\diam K}\ge\frac{\eps}{L}\right\}.
	\end{align*}
	If there are compact $K\subseteq F$ such that $\diam K_{S\,\diam K}\ge\frac{\eps}{L}$, then $0<\gamma<\infty$, else $\gamma=\inf\varnothing=\infty$.
	Finally define $A\coloneqq  \max(H Q L^{\alpha}B,HP^{-1}B,\gamma^{-\alpha}(L-1))$ and fix an $x_K\in K$ for each compact $\varnothing\neq K\subseteq F$.
	Then setting $C_K\coloneqq \left|\lambda_{x_K} \right|$ if $\diam K_{S\,\diam K}<\frac{\eps}{L}$, and $C_K\coloneqq 1$ if $\diam K_{S\,\diam K}\ge\frac{\eps}{L}$,
	completes the proof.
\end{proof}

\begin{rem}
	Above statement still holds, if one assumes the Jacobian matrix of $D\Phi(x)$ to be scalar times orthogonal matrix for each $x\in M$.
\end{rem}


\section*{Acknowledgements}

We want to thank Martina Z\"ahle for her support
and giving us the opportunity to work on Ahlfors regular sets.
Also we want to thank Jan Rataj for useful discussions concerning
Lemma \ref{rataj-lemma}, and Sabrina Kombrink for helpful
comments concerning the issue of non-latticeness.

\bibliographystyle{abbrv} \bibliography{references}
\end{document}